\definecolor{blue}{rgb}{0,0,0.9}
\definecolor{red}{rgb}{0.9,0,0}
\definecolor{green}{rgb}{0,0.9,0}
\newtheorem{assumption}{Assumption \!\!}
\theoremstyle{remark}
\newtheorem*{remark}{Remark}
\newtheorem{fact}{Fact}[section]
\newcommand{\R}{\mathbb{R}}
\newcommand{\Rext}{\mathbb{R}\cup\{+\infty\}}
\newcommand{\Spd}{\mathbb{S}}
\newcommand{\Id}{\mathbb{I}}
\newcommand{\abs}[1]{\left\vert#1\right\vert}
\newcommand{\set}[1]{\left\{#1\right\}}
\newcommand{\norm}[1]{\left\Vert#1\right\Vert}
\newcommand{\norms}[1]{\Vert#1\Vert}
\newcommand{\prox}{\mathrm{prox}}
\newcommand{\argmin}{\mathrm{arg}\!\displaystyle\min}
\newcommand{\dom}[1]{\mathrm{dom}(#1)}
\newcommand{\iprods}[1]{\langle #1\rangle}
\newcommand{\trace}[1]{\mathrm{trace}\left(#1\right)}
\newcommand{\intx}[1]{\mathrm{int}\left(#1\right)}
\newcommand{\dist}[1]{\mathrm{dist}\left(#1\right)}
\newcommand{\BigO}[1]{\mathcal{O}\left(#1\right)}
\newcommand{\Eproof}{\hfill $\square$}
\newcommand{\Xc}{\mathcal{X}}
\newcommand{\Sc}{\mathcal{S}}
\newcommand{\Lc}{\mathcal{L}}
\newcommand{\Qc}{\mathcal{Q}}
\newcommand{\Cc}{\mathcal{C}}
\newcommand{\Pc}{\mathcal{P}}
\newcommand{\Kc}{\mathcal{K}}
\newcommand{\myeq}[2]{\vspace{-0.75ex}
\begin{equation}\label{#1}
{#2}
\vspace{-0.75ex}
\end{equation}
}
\newcommand{\myeqn}[1]{\vspace{-0.75ex}
\begin{equation*}
{#1}
\vspace{-0.75ex}
\end{equation*}
}
\newcommand{\TheTitle}{A New Homotopy Proximal Variable-Metric Framework for Composite Convex Minimization} 
\newcommand{\TheRunningTitle}{A New Homotopy Proximal Variable-Metric Framework}
\newcommand{\TheAuthors}{Quoc Tran-Dinh, Liang Ling, and Kim-Chuan Toh}
\newcommand{\TheAbAuthors}{Q. Tran-Dinh, Liang Ling, and Kim-Chuan Toh}
\headers{\TheRunningTitle}{\TheAbAuthors}
\title{{\TheTitle}}
\author{
Quoc Tran-Dinh\thanks{Department of Statistics and Operations Research, 
University of North Carolina at Chapel Hill (UNC), 333-Hanes Hall, Chapel Hill, NC27599-3260, USA. 
\newline Email: {\tt quoctd@email.unc.edu}.}
\and
Liang Ling$^{\dagger}$
\and
Kim-Chuan Toh\thanks{Department of Mathematics, and Institute of Operations Research and Analytics, National University of Singapore, 10 Lower Kent Ridge Road, Singapore 119076. 
\newline Email:{\tt \{liang.ling, mattohkc\}@nus.edu.sg)}} 
}
\begin{document}
\maketitle

\begin{abstract}
This paper suggests two novel ideas to develop new proximal variable-metric methods for solving a class of composite convex optimization problems.
The first idea is a new parameterization of the optimality condition which allows us to develop a class of homotopy proximal variable-metric methods.
We show that under appropriate assumptions such as strong convexity-type and smoothness, or self-concordance, our new schemes can achieve \textit{finite} global iteration-complexity bounds.
Our second idea is a primal-dual-primal framework for proximal-Newton methods which 
{can lead to} some useful computational  features for a subclass of nonsmooth composite convex optimization problems.
Starting from the primal problem, we formulate its dual problem, and use our homotopy proximal Newton method to solve this dual problem.
Instead of solving the subproblem directly in the dual space, we suggest to dualize this subproblem to go back to the primal space. 
The resulting subproblem shares some similarity promoted by the regularizer of the original problem and leads to some computational advantages.
As a byproduct, we specialize the proposed algorithm to solve covariance 
{estimation} problems.
Surprisingly, our new algorithm does not require any matrix inversion or Cholesky factorization, and function evaluation, while it works {in the primal space with 
sparsity structures that are} promoted by the regularizer.
Numerical examples on several applications are given to illustrate our theoretical development and to compare with state-of-the-arts.

\vspace{1ex}
\noindent\textbf{Keywords:}
Homotopy method; proximal variable-metric algorithm; global convergence rate; finite iteration-complexity; primal-dual-primal framework; composite convex minimization.
\end{abstract}

% REQUIRED
\noindent
\begin{AMS}
90C25, 90C06, 90-08
\end{AMS}

%%%%%%%%%%%%%%%%%%%%%%%%%%%%%%%%%%%%%%%%%%%%%%%%%
%% 1. Introduction.
%%%%%%%%%%%%%%%%%%%%%%%%%%%%%%%%%%%%%%%%%%%%%%%%%
\section{Introduction}\label{sec:intro}
%%%%%%%%%%%%%%%%%%%%%%%%%%%%%%
%%% 1.1. Problem statement.
%%%%%%%%%%%%%%%%%%%%%%%%%%%%%%
\paragraph{\textbf{Problem statement}}
We are interested in the following  composite convex minimization template that covers 
{various} of applications in different fields including statistics, machine learning, image and signal processing, and engineering \cite{Bauschke2011,Beck2009,Boyd2011,Chambolle2011,Esser2010a,Parikh2013}:
\myeq{eq:composite_cvx}{
F^{\star} := \min_{x\in\R^p} \Big\{ {F(x)} := f(x) + g(x) \Big\},
}
where $f : \R^p\to\Rext$ and $g : \R^p\to\Rext$ are proper, closed, and convex functions.
Here, $f$ often represents a loss function or a data fidelity term, while $g$ is considered as a regularizer or a penalty to promote some desired structures of the final solutions.
%\fbox{{I change to $F$ since the proofs  later used $F$.}}

%%%%%%%%%%%%%%%%%%%%%%%%%%%%%%
%%% 1.2. Research questions.
%%%%%%%%%%%%%%%%%%%%%%%%%%%%%%
\paragraph{\textbf{Motivation}}

This paper aims at addressing two questions arisen from numerical methods for solving \eqref{eq:composite_cvx}.
The first question {concerns} the global iteration-complexity of second-order-type methods.
It is well-known that second-order methods such as Newton-type algorithms have fast local  convergence rates under certain assumptions.
In particular, the classical Newton method can achieve a local quadratic convergence rate under the local Lipschitz continuity of the Hessian around  an optimal solution and the regularity of such an optimal solution \cite{Deuflhard2006}.
However, global convergence behaviors as well as global convergence rates and iteration-complexity estimates of second-order-type methods have not yet been well understood.
Recent attempts {to address the aforementioned issues} have been made for Newton-type methods \cite{Nesterov2008b,Nesterov1994,Nesterov2006a},  but they are still limited to some subclasses of problems such as self-concordant and global Lipschitz Hessian functions.
%\fbox{given settings and are also insufficient. {This phrase is not clear.}}
In the first part of this paper, we address the following question.
\begin{itemize}
\item \textit{When can we design second-order-type methods that achieve global iteration-complexity?}
\end{itemize}
Unfortunately, we do not have a complete answer for this question.
However, we identify three different subclasses of \eqref{eq:composite_cvx} where we can develop new proximal variable-metric methods to achieve a global iteration-complexity.
Our algorithms can solve nonsmooth instances of \eqref{eq:composite_cvx}, but require $f$ to be smooth and satisfy other additional  {mild} conditions that are different from existing methods.

In the second part of this paper, we address another situation of \eqref{eq:composite_cvx}.
We observe that existing methods for solving nonsmooth instances of \eqref{eq:composite_cvx} can be classified into the following categories:
\begin{enumerate}
\item[(a)] If $f$ is smooth with the Lipschitz gradient and $g$ is nonsmooth but proximally tractable as defined in Subsection \ref{subsec:prox_oper}, then accelerated proximal-gradient methods achieve optimal convergence rate of $\BigO{\frac{1}{k^2}}$, where $k$ is the iteration counter.
If $f$ is twice differentiable, and its Hessian is Lipschitz continuous \cite{Lee2014} or self-concordant \cite{Nesterov2004}, then we can apply proximal-Newton methods \cite{Lee2014,Tran-Dinh2013b,Tran-Dinh2013a} to efficiently solve \eqref{eq:composite_cvx}.
\item[(b)] If both $f$ and $g$ are proximally tractable, then operator splitting schemes such as Douglas-Rachford's methods can be used to efficiently solve \eqref{eq:composite_cvx} but with a sublinear rate \cite{Bauschke2011,Davis2014}.
\item[(c)] If $g(x) = \psi(Dx)$ for a given linear operator $D$, and both $f$ and $\psi$ are proximally tractable, then primal-dual methods such as Chambolle-Pock's and primal-dual hybrid gradient methods, and alternating direction methods of multipliers (ADMM) can be applied to \eqref{eq:composite_cvx}.
These methods also achieve a sublinear rate in general.
\end{enumerate}
We instead consider the following subclass of \eqref{eq:composite_cvx}, where
{
\begin{enumerate}
\item[(d)]  $f$ is self-concordant as defined in Definition \ref{de:gen_sel_con_def}; and
$g$ is given by $g(x) = \psi(Dx)$, where $D$ is a linear operator, and $\psi$ is nonsmooth and convex, but proximally tractable.
\end{enumerate}
}
Under this setting, existing methods such as proximal-gradient-type schemes are often not efficient {for solving \eqref{eq:composite_cvx} due to the 
expensive evaluation of the} proximal operator of $g$.
We address the following research question:
\begin{itemize}
\item \textit{What is an appropriate solution method to solve 
 \eqref{eq:composite_cvx} under the conditions {stated in the subclass $(\mathrm{d})$}?}
\end{itemize}
This question may have multiple answers. One can apply some primal-dual methods to solve it.
However, these methods only have a sublinear convergence rate.
We instead propose a primal-dual-primal approach to solve  \eqref{eq:composite_cvx}  which consists of the following steps:
\begin{enumerate}
\item Construct the Fenchel dual problem of \eqref{eq:composite_cvx} when $g(x) = \psi(Dx)$.
\item Apply our homotopy proximal-Newton method in the first part to solve the dual problem.
\item Instead of solving the dual subproblem, we dualize it to go back to the primal space.
\item Construct an approximate primal solution of \eqref{eq:composite_cvx} from its dual approximate {solution.}
\end{enumerate}
The idea of using primal-dual approach is classical, but our primal-dual-primal method has various computational advantages as well as a linear convergence rate when it is applied to the  subclass {(d)} of \eqref{eq:composite_cvx}.
As a motivating example, we will show in Section \ref{sec:GL_app} that this approach is very suitable for covariance estimation problem \eqref{eq:GL_prob} below.

%%%%%%%%%%%%%%%%%%%%%%%%%%%%%%
%%% 1.3. Examples
%%%%%%%%%%%%%%%%%%%%%%%%%%%%%%
\paragraph{\textbf{Examples}}
Apart from two research questions above, our paper is also motivated by several prominent applications.
Let us recall  a few concrete examples of \eqref{eq:composite_cvx}:
\begin{enumerate}
\item \textit{Covariance estimation models: } 
If $f(X) := -\log\det(X) + \trace{\Sigma X}$ in \eqref{eq:composite_cvx}, where $\Sigma$ is a given symmetric matrix, then \eqref{eq:composite_cvx} covers both covariance and inverse covariance estimation problems in the literature depending on the choice of $g$ \cite{Banerjee2008,Friedman2008,Kyrillidis2014}:
\myeq{eq:GL_prob}{
\phi^{\star} := \min_{X\succ 0}\Big\{ \phi(X) :=  \trace{\Sigma X} - \log\det(X)  +  g(X) \Big\},
}
\item \textit{Poisson log-likelihood models:}
If we choose $f(x) := \sum_{i=1}^n\left( a_i^{\top}x - y_i\log(a_i^{\top}x)\right)$, where $\set{(a_i, y_i)}_{i=1}^n$ is a given dataset, then \eqref{eq:composite_cvx} covers Poisson log-likelihood models {used} in medical imaging, see, e.g., \cite{Lefkimmiatis2013}.
\item \textit{Regularized logistic regression:} 
If we choose $f(x) :=  \frac{1}{n}\sum_{i=1}^n\log( 1 + \exp(-y_i(a_i^{\top}x))) + \frac{\mu_f}{2}\norms{x}_2^2$, where $\set{(a_i, y_i)}_{i=1}^n$ is a given dataset, and $\mu_f > 0$ is a regularization parameter, then \eqref{eq:composite_cvx} covers {the} well-known logistic models including both sparse and group sparse settings under an appropriate choice of $g$.
\item \textit{Poisson regression:} 
If $f(x) := \frac{1}{n}\sum_{i=1}^n\left(-y_i\exp(-\frac{1}{2}a_i^{\top}x) + \exp(\frac{1}{2}a_i^{\top}x)\right) + \frac{\mu_f}{2}\norms{x}^2$, where $\set{(a_i, y_i)}_{i=1}^n$ is given, then we obtain a Poisson regression problem as studied in \cite{ivanoff2016adaptive,jia2017sparse}.
\item \textit{Distance-weighted discrimination} (DWD): If $f(x) := \frac{1}{n}\sum_{i=1}^n\frac{1}{(a_i^{\top}x + \mu_i)^q} + \frac{\mu_f}{2}\norms{x}_2^2$, for some fixed order $q > 0$, then this model can be considered as a slight modification of the distance-weighted discrimination (DWD) for binary classification studied in \cite{lam2018fast,marron2007distance}.
\end{enumerate}
Many other applications of \eqref{eq:composite_cvx}  that fit our assumptions can be found, {for example} in \cite{Dalalyan2013,Ostrovskii2018,Tran-Dinh2013a}.

%%%%%%%%%%%%%%%%%%%%%%%%%%%%%%
%%% 1.4. Literature review.
%%%%%%%%%%%%%%%%%%%%%%%%%%%%%%
\paragraph{\textbf{Literature review}}
Problem \eqref{eq:composite_cvx} is well studied in the literature under different assumptions on $f$ and $g$.
Hitherto, several methods have been proposed for solving \eqref{eq:composite_cvx}.
Such methods include disciplined convex programming \cite{Grant2006,Wright2009}, proximal gradient and accelerated proximal gradient \cite{Beck2009,Nesterov2004,Nesterov2007}, proximal Newton-type \cite{Becker2012a,Lee2014,Tran-Dinh2013a}, splitting and alternating optimization \cite{Boyd2011,Esser2010a,Goldfarb2012,Yuan2012}, primal-dual \cite{Chambolle2011,TranDinh2015b}, coordinate descent \cite{fercoq2015accelerated,Richtarik2012,nesterov2017efficiency,wright2015coordinate}, conditional gradient \cite{harchaoui2012conditional,Jaggi2013}, stochastic gradient-type methods \cite{allen2016katyusha,defazio2014saga,nitanda2014stochastic,shalev2013stochastic,xiao2014proximal}, and incremental proximal gradient schemes \cite{Bertsekas2011b}.

Existing first-order methods for solving \eqref{eq:composite_cvx} heavily rely on the
{assumption that $f$ has
 Lipschitz gradient} \cite{Nesterov2004} and the proximal tractability of $g$ \cite{Bauschke2011,Parikh2013} as defined in Subsection~\ref{subsec:prox_oper}.
Another common subclass of \eqref{eq:composite_cvx} is that $g(x) = \psi(Dx)$ for a given linear operator $D$, and both $f$ and $\psi$ are proximally tractable.
Under this setting, operator splitting and primal-dual approaches can be applied to solve \eqref{eq:composite_cvx}.
Notable works  in this direction include primal-dual hybrid gradient schemes, Chambolle-Pock's methods, Douglas-Rachford and Vu-Condat splitting algorithms, and alternating direction methods of multipliers \cite{Bauschke2011,Boyd2011,Chambolle2011,Esser2010a,Goldstein2009}.
While first-order methods offer a low per-iteration {computational} complexity, they often require {a large number of} iterations and have a sublinear convergence rate.
In addition, their efficiency also depends {sensitively on the scaling and conditioning} of the problem \cite{giselsson2014diagonal}.

Proximal second-order methods such as proximal quasi-Newton \cite{Becker2012a,karimi2017imro,stella2017forward} and proximal-Newton methods \cite{Lee2014,Tran-Dinh2013a} often achieve a high accuracy solution and have good local convergence rate {but} they usually have high per-iteration {computational} complexity.
In proximal second-order-type methods, the trade-off between iteration-complexity and per-iteration {computational} complexity is crucial to obtain a good performance. 
Some existing works such as \cite{Becker2012a,Hsieh2011,hsieh2013big,karimi2017imro,Tran-Dinh2013b,Tran-Dinh2013a} have provided evidence showing that second-order methods outperform first-order methods for {some important}
 subclasses of \eqref{eq:composite_cvx}.
{The} recent work \cite{karimireddy2018global} also studied {the} global linear convergence of Newton methods, but using a different concept {called} ``$c$-Hessian stable''.
Nevertheless, it is completely different from our approach.

%%%%%%%%%%%%%%%%%%%%%%%%%%%%%%
%%%% 1.5. Our approach.
%%%%%%%%%%%%%%%%%%%%%%%%%%%%%%
\paragraph{\textbf{Our approach}}
Our approach {here} relies on a combination of different ideas.
The first idea is the homotopy method, which has been used in interior-point methods \cite{Nesterov2004} and recently in path-following proximal Newton algorithms \cite{TranDinh2013e}, where the main iterations rely on a scaled proximal Newton scheme \cite{TranDinh2013e}.
The second idea is a new parameterization of the optimality condition of \eqref{eq:composite_cvx} as presented in Subsection \ref{subsec:re-parameterization}.
Our third idea is inspired by {the} generalized self-concordance concept introduced in \cite{SunTran2017gsc}.
The last one is a primal-dual-primal framework {that we have mentioned} above.

%%%%%%%%%%%%%%%%%%%%%%%%%%%%%%
%%%% 1.5. Our contribution.
%%%%%%%%%%%%%%%%%%%%%%%%%%%%%%
\paragraph{\textbf{Our contribution}}
Our contribution can be summarized as follows.
\begin{itemize}
\item[$\mathrm{(a)}$] 
We suggest a new parameterization for the optimality condition of \eqref{eq:composite_cvx} as a framework to study homotopy proximal variable-metric methods for solving different subclasses of \eqref{eq:composite_cvx}. This framework covers homotopy proximal-gradient, proximal quasi-Newton, and proximal-Newton methods, and their inexact variants as special cases.

\item[$\mathrm{(b)}$] 
We propose a homotopy proximal variable-metric scheme, Algorithm~\ref{alg:A1}, to solve \eqref{eq:composite_cvx} based on our new parameterization strategy.
We show that this scheme achieves a global linear convergence rate under the strong convexity and Lipschitz gradient assumptions w.r.t. a local norm, and the Lipschitz continuity of $g$.
We also propose an inexact homotopy proximal-Newton method to solve \eqref{eq:composite_cvx}.
Under the self-concordant {property} of $f$, and either the Lipschitz continuity of $g$ or the barrier property of $f$, our algorithm can also achieve a {finite} global iteration-complexity estimate.
With an appropriate choice of initial points or {suitable assumptions on $f$ and/or $g$}, 
%\fbox{{Please check that the last phrase is correct}} 
our method can achieve a linear convergence rate.

\item[$\mathrm{(c)}$] 
We propose a primal-dual-primal approach for a subclass of \eqref{eq:composite_cvx} where $f$ is self-concordant.
This approach {produces} a new homotopy primal-dual proximal-Newton algorithm which can also achieve a linear convergence rate under given assumptions.

\item[$\mathrm{(d)}$] 
We {specialize} our algorithm to solve a special case \eqref{eq:GL_prob} of \eqref{eq:composite_cvx} known as a regularized covariance estimation problems studied in \cite{Banerjee2008,Friedman2008,Hsieh2011,hsieh2013big,Kyrillidis2014,Tran-Dinh2013b,Tran-Dinh2013a}. 
This algorithmic variant possesses the following new features compared to existing works \cite{Hsieh2011,hsieh2013big,Tran-Dinh2013b,Tran-Dinh2013a}.
First, it {is applicable} to any regularizer $g$ instead of just the $\ell_1$-norm as in \cite{Hsieh2011,hsieh2013big}.
Second, it deals with the dual form of \eqref{eq:GL_prob}, while {allowing} one to reconstruct an approximate primal solution for \eqref{eq:GL_prob}.
Third, it does not require any Cholesky factorization or matrix inversion as in \cite{Tran-Dinh2013b} {by} working on the dual form. 
Fourth, the subproblem for computing proximal Newton directions is in the primal space of $X$
{which has some special structures as} promoted by the regularizer $g$ instead of in the dual space {where the dual variable has structures 
that are correspondingly} promoted by the conjugate $g^{\ast}$.
{The last point is important computationally} when $g$ promotes the sparsity or low-rankness of the solutions.
\end{itemize}

Let us emphasize the following aspects of our contribution.
Firstly, our new parameterization strategy can potentially be used to develop new numerical methods for different subclasses of \eqref{eq:composite_cvx} instead of only {the} four cases 
%\fbox{{Shouldn't it be 4 subclasses??}} 
studied in this paper.
Secondly, our path-following scheme for finding an appropriate initial point of Algorithm~\ref{alg:A1} is independent of a starting point as shown in Theorem~\ref{th:initial_point}.
Thirdly, even for a strongly convex and Lipschitz gradient function $f$, our homotopy scheme has advantages in sparse optimization as discussed in Section \ref{sec:homo_prox_grad_alg}.
Fourthly, \eqref{eq:composite_cvx} is different from the barrier formulation considered in \cite{TranDinh2013e}, where we do not use any penalty parameter for $f$ in \eqref{eq:composite_cvx} {as} compared to \cite{TranDinh2013e}.
In addition, \cite{TranDinh2013e} is aimed at solving constrained convex optimization problems where the barrier is induced from the feasible set.
Finally, for the covariance estimation problem~\eqref{eq:GL_prob}, our method shares some similarity with \cite{Hsieh2011,hsieh2013big,Tran-Dinh2013b,TranDinh2013e}, but it is still fundamentally different.
While \cite{Hsieh2011,hsieh2013big} {focused} on the sparse instance of \eqref{eq:GL_prob}, we consider a more general form in \eqref{eq:composite_cvx} that covers this example as a special case.
Our algorithm and its convergence guarantee are completely different and rely on a different approach compared to \cite{Hsieh2011,hsieh2013big}.
It has an iteration-complexity analysis {for a genera} $g$, while {the analysis in \cite{Hsieh2011,hsieh2013big} rely critically on} the special structure of the $\ell_1$-norm for $g$.

\paragraph{Paper organization}
In Section~\ref{sec:background}, we recall some preliminary results used in this paper.
Section~\ref{sec:methods} presents a new parameterization for the optimality condition of \eqref{eq:composite_cvx} and a conceptual three-stage proximal variable-metric framework, Algorithm~\ref{alg:A1}, for solving \eqref{eq:composite_cvx}.
Section~\ref{sec:three_classes_of_models} analyzes {the} convergence of Algorithm~\ref{alg:A1} under three sets of assumptions.
Section~\ref{subsec:initial_point} proposes some procedures to find an appropriate starting point for Algorithm~\ref{alg:A1}.
Section~\ref{sec:PDP_alg} proposes a primal-dual-primal method for solving a nonsmooth subclass of \eqref{eq:composite_cvx}, and its application to the covariance estimation problem \eqref{eq:GL_prob}.
Section~\ref{sec:num_exp} provides several numerical experiments to illustrate our theoretical results.
All technical proofs are deferred to the appendices.

%%%%%%%%%%%%%%%%%%%%%%%%%%%%%%%%%%%%%%%%%%
%%% 2. Preliminaries: self-concordance and scaled proximity
\section{Preliminaries: Scaled proximal operators and optimality condition}\label{sec:background}
In this section, we recall some basic concepts which will be used in the sequel.

%%% 2.1. Basic notation and concepts.
\subsection{Basic notation and concepts}
We work on the {vector space} $\R^p$  equipped with the standard inner product $\iprods{\cdot, \cdot}$ and the corresponding Euclidean norm $\norm{\cdot}_2$.
We use $\Spd^p_{++}$ to denote the set of all symmetric positive definite matrices in $\R^{p\times p}$.
For a given  $H \in\Spd^p_{++}$, we use $\norm{x}_H := \iprods{Hx, x}^{1/2}$ to denote the weighted norm.
The corresponding dual norm is $\norms{y}_H^{\ast} = \iprods{H^{-1}y, y}^{1/2}$.

For a subset $\Xc$, $\intx{\Xc}$ denotes the interior of $\Xc$, and $\partial{\Xc}$ denotes its boundary.
Let $f : \R^p\to\Rext$ be a convex function. As usual, $\dom{f}$ denotes the effective domain of $f$, and $\partial{f}$ denotes its subdifferential \cite{Rockafellar1970}.
If $f$ is twice differentiable, then $\nabla{f}$ and $\nabla^2{f}$ denote its gradient and Hessian, respectively.
For a given twice differentiable convex function $f$, if $x\in\dom{f}$ such that $\nabla^2{f}(x) \succ 0$, we define a local norm and its dual norm associated with $f$ as in \cite{Nesterov1994}:
\myeq{eq:local_norm}{
\norm{u}_x := \iprods{\nabla^2{f}(x)u, u}^{1/2} ~\text{and}~\norm{v}_{x}^{\ast} := \iprods{\nabla^2{f}(x)^{-1}v, v}^{1/2},
}
for any $u, v\in\R^p$. 
Clearly, $\iprods{u, v} \leq \norm{u}_x\norm{v}_x^{\ast}$.
This is the weighted norm with $H = \nabla^2{f}(x)$.
For a real number $a$, we use $\lfloor a\rfloor$ to denote the integer less than or equal to $a$. 
We use $[a]_{+} := \max\set{0, a}$ for any real number $a$.

Given a nonempty convex set $\Xc$ in $\R^p$, and a point $x\in\R^p$, the distance from $x$ to $\Xc$ corresponding to the weighted-norm $\norm{\cdot}_H$ is defined as $\dist{x,\Xc} := \inf_{y\in\Xc}\norm{x - y}_H$.
For a given convex function $f$, we say that $f$ is $\mu_f$-strongly convex if $f(\cdot) - \tfrac{\mu_f}{2}\norm{\cdot}_2^2$ remains convex, where $\mu_f > 0$ is called the strong convexity parameter of $f$.
We say that $f$ is $L_f$-smooth (i.e., Lipschitz gradient continuous) if $f$ is differentiable on $\dom{f}$ and $\nabla{f}$ is Lipschitz continuous with a Lipschitz constant $L_f \in [0, +\infty)$, i.e., $\norms{\nabla{f}(x) - \nabla{f}(y)}_2 \leq L_f\norms{x - y}_2$ for all $x, y\in\dom{f}$.
We denote the class of $\mu_f$-strongly convex and $L_f$-smooth functions by $\mathcal{F}_{L,\mu}^{1,1}$.
A convex function $g$ is Lipschitz continuous on {$\dom{g}$} with a Lipschitz constant $L_g \in [0, +\infty)$ if $\vert g(x) - g(y)\vert \leq L_g\Vert y - x\Vert_2$ for all $x,y\in\dom{g}$.

%%% 2.3. Scaled proximal operators
\subsection{Scaled proximal operators}\label{subsec:prox_oper}
Let  $g : \R^p\to\Rext$ be a proper, closed, and convex function, and $H \in \Spd^p_{++}$. 
We define the following \textit{scaled} proximal operator \cite{friedlander2016efficient} of $g$:
\myeq{eq:scaled_prox_oper2}{
\prox^{H}_{g}(x) := \argmin_{u\in\R^p}\big\{ g(u) + \tfrac{1}{2}\Vert u - x\Vert_{H}^2 \big\}.
}
The optimality condition of this minimization problem is $0 \in H(\prox^{H}_{g}(x) - x) + \partial{g}(\prox^{H}_{g}(x))$, which can be written as $x \in (\Id + H^{-1}\partial{g})(\prox^{H}_{g}(x))$, or $\prox^{H}_{g}(x) = (\Id + H^{-1}\partial{g})^{-1}(x)$.
When $H = \frac{1}{\gamma}\Id$, where $\gamma > 0$ and $\Id$ is  the identity matrix, $\prox^{H}_{g}(\cdot)$ becomes a classical proximal operator \cite{Bauschke2011,Parikh2013}, and is usually denoted by $\prox_{\gamma g}(\cdot)$.
{An important} property of $\prox_{g}^{H}$ is {its nonexpansiveness}
\myeq{eq:nonexpansiveness}{
\Vert \prox^{H}_{g}(x) - \prox^{H}_{g}(y)\Vert_{H} \leq \Vert x - y\Vert_{H},
}
for any $x, y$ in $\R^p$.
We say that $g$ is \textit{proximally tractable} if $\prox^{H}_{g}(\cdot)$ can be efficiently evaluated, e.g., in a closed form or by a low-order polynomial time algorithm (e.g., $\BigO{p\log(p)}$).
Computational methods for evaluating this scaled proximal operator and its classical forms can be easily found in the literature including \cite{friedlander2016efficient,Parikh2013}.

%%% 2.4. Lipschitz continuity w.r.t. local norm of g
\subsection{Lipschitz continuity w.r.t. local norm}
Let $\norms{\cdot}_x$ and its dual norm $\norm{\cdot}_x^{\ast}$ be 
defined by a strictly smooth convex function $f : \R^p\to\R$, and $g : \R^p\to\Rext$ be a proper, closed, and convex function. 

%% Definition 4.
\begin{definition}\label{de:f_lipschitz}
We say that $g$ is $L_g$-Lipschitz continuous w.r.t. $\norms{\cdot}_x$ with a Lipschitz constant $L_g \in [0, +\infty)$, if for any $x, y, z\in\dom{f}\cap\dom{g}$, we have $\vert g(y) - g(z)\vert \leq L_g\Vert y - z\Vert_x$.  
\end{definition}
As a concrete example, assuming that $f(x) = \frac{1}{2}x^{\top}Qx + q^{\top}x$ is a strongly convex quadratic function, then $g$ is Lipschitz continuous in $\ell_2$-norm if and only if $g$ is Lipschitz continuous w.r.t. the local norm defined by $f$.

%%% Lemma 5.
\begin{lemma}\label{le:bound_subgrad}
A proper, closed, and convex function $g$ is $L_g$-Lipschitz continuous w.r.t. $\norms{\cdot}_x$ with a Lipschitz constant $L_g$ on $\dom{f}\cap\dom{g}$ if and only if $\Vert\nabla{g}(y)\Vert^{\ast}_{x} \leq L_g$ for any $x, y\in\dom{f}\cap\dom{g}$ and $\nabla{g}(y)\in\partial{g}(y)$.

In particular, if $f$ is strongly convex with a strong convexity parameter $\mu_f > 0$ and $g$ is Lipschitz continuous in $\ell_2$-norm with a Lipschitz constant $\bar{L}_g \geq 0$ $($i.e. $\vert{g(y) - g(z)}\vert \leq \bar{L}_g\norms{y-z}_2$ for any $y, z\in\dom{f}\cap\dom{g}$$)$, then $g$ is $L_g$-Lipschitz continuous w.r.t. $\norms{\cdot}_x$ on $\dom{f}\cap\dom{g}$ with the Lipschitz constant $L_g := \frac{\bar{L}_g}{\sqrt{\mu_f}}$. 
However, the converse statement does not hold in general.
\end{lemma}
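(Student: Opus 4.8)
The plan is to prove the two directions of the equivalence and then establish the two subsequent claims (the strong-convexity corollary and the failure of the converse). The central observation is that a local norm $\norms{\cdot}_x$ is an honest norm associated to the positive-definite matrix $\nabla^2 f(x)$, so for a fixed $x$ the statement "$|g(y)-g(z)| \le L_g\norms{y-z}_x$ for all $y,z$'' is just the ordinary Lipschitz continuity of $g$ with respect to a fixed inner-product norm, and the classical characterization of Lipschitz continuity via boundedness of subgradients applies verbatim. The only subtlety is that Definition~\ref{de:f_lipschitz} quantifies over $x$ as well, so I must be careful about which $x$ the subgradient bound is stated for.

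First I would prove the "only if'' direction. Fix $x,y\in\dom{f}\cap\dom{g}$ and $\nabla g(y)\in\partial g(y)$. By the subgradient inequality, for any $z\in\dom{f}\cap\dom{g}$ we have $\iprod{\nabla g(y), z-y} \le g(z)-g(y) \le L_g\norm{z-y}_x$, using the Lipschitz hypothesis in the last step. Taking $z = y - t\,\nabla^2 f(x)^{-1}\nabla g(y)$ for small $t>0$ (which lies in $\dom f$ for $t$ small since $\dom f$ is open, $f$ being strictly smooth on $\R^p$, and one then restricts to points also in $\dom g$ by a density/closure argument, or simply works on $\intx(\dom f \cap \dom g)$ and extends by continuity), the left side becomes $t\,(\norm{\nabla g(y)}^{\ast}_x)^2$ while the right side becomes $t\,L_g\norm{\nabla g(y)}^{\ast}_x$, because $\norm{\nabla^2 f(x)^{-1}v}_x = \norm{v}^{\ast}_x$. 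Dividing by $t\norm{\nabla g(y)}^{\ast}_x$ (assuming it is nonzero; otherwise the bound is trivial) yields $\norm{\nabla g(y)}^{\ast}_x \le L_g$. For the "if'' direction, fix $x$ and $y,z\in\dom f\cap\dom g$; by convexity pick $\nabla g(z)\in\partial g(z)$ and $\nabla g(y)\in\partial g(y)$, so $g(y)-g(z)\le \iprod{\nabla g(y), y-z} \le \norm{\nabla g(y)}^{\ast}_x\norm{y-z}_x \le L_g\norm{y-z}_x$ by Cauchy--Schwarz for the pair of dual local norms; symmetrizing gives $|g(y)-g(z)|\le L_g\norm{y-z}_x$.

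For the strong-convexity corollary, $\mu_f$-strong convexity of $f$ means $\nabla^2 f(x)\succeq \mu_f\Id$, hence $\nabla^2 f(x)^{-1}\preceq \frac{1}{\mu_f}\Id$, so $(\norm{v}^{\ast}_x)^2 = \iprod{\nabla^2 f(x)^{-1}v,v} \le \frac{1}{\mu_f}\norm{v}_2^2$. Combining this with the characterization just proved: $\ell_2$-Lipschitz continuity of $g$ with constant $\bar L_g$ gives $\norm{\nabla g(y)}_2\le \bar L_g$ for all subgradients (standard), hence $\norm{\nabla g(y)}^{\ast}_x \le \frac{1}{\sqrt{\mu_f}}\norm{\nabla g(y)}_2 \le \frac{\bar L_g}{\sqrt{\mu_f}}$, and applying the "if'' direction with $L_g := \bar L_g/\sqrt{\mu_f}$ finishes it. For the failure of the converse, I would exhibit a simple counterexample: take $f$ with $\nabla^2 f$ unbounded (e.g.\ $p=1$, $f(x)=e^x$, or a self-concordant barrier), so that $\norm{v}^{\ast}_x\to 0$ as $x$ moves in one direction; then a $g$ that is \emph{not} $\ell_2$-Lipschitz (e.g.\ $g(x)=x^2$) can still satisfy $\norm{\nabla g(y)}^{\ast}_x$ bounded if the domain geometry forces $y$ to stay where $\nabla^2 f$ is large — one picks $f,g,\dom f\cap\dom g$ so that on that common domain the local-norm bound holds but the Euclidean slope is unbounded.

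The main obstacle, and the only place requiring care, is the domain/boundary issue in the "only if'' direction: the perturbed point $y - t\nabla^2 f(x)^{-1}\nabla g(y)$ must remain in $\dom f\cap\dom g$. I expect to handle this either by first proving the equivalence on the relative interior of $\dom f\cap\dom g$ and extending by closedness and lower semicontinuity of $g$, or by noting that the Lipschitz hypothesis is a closed condition so it suffices to verify the subgradient bound on a dense subset. Everything else is a routine application of Cauchy--Schwarz for the dual pair $(\norm{\cdot}_x,\norm{\cdot}^{\ast}_x)$ and the spectral bound $\mu_f\Id\preceq\nabla^2 f(x)$.
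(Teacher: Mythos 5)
Your proof is correct and follows essentially the same route as the paper: for the forward direction the paper computes the dual norm $\|\nabla g(y)\|_x^{\ast}$ as a maximum of $\iprods{\nabla g(y), z-y}$ over the unit $\|\cdot\|_x$-ball and bounds it via the subgradient inequality and the Lipschitz hypothesis, which is just the ``integrated'' version of your step of plugging in the maximizing direction $z = y - t\,\nabla^2 f(x)^{-1}\nabla g(y)$; the reverse direction and the strong-convexity corollary (via $\nabla^2 f(x)\succeq \mu_f\Id$, hence $\|\cdot\|_2 \le \mu_f^{-1/2}\|\cdot\|_x$) are identical. The domain/boundary subtlety you flag is real but is also glossed over in the paper's own proof, and the paper likewise does not prove the failure of the converse, so your proposal matches the paper in scope as well as in method.
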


%% Proof of Lemma 5.
\begin{proof}
For any $x, y\in\dom{f}\cap\dom{g}$ and $\nabla{g}(y)\in\partial{g}(y)$, we have 
\myeqn{\begin{array}{ll}
\norms{\nabla{g}(y)}_x^{\ast} &= \max\set{\iprods{\nabla{g}(y), z - y} \mid \norms{z-y}_x \leq 1} \vspace{1ex}\\
&\leq \max\set{ \vert g(z) - g(y)\vert \mid \norm{z-y}_x \leq 1} \vspace{1ex}\\
& \leq L_g\max\set{\norm{z-y}_x \mid \norm{z-y}_x\leq 1} = L_g.
\end{array}}
Conversely, by convexity of $g$, we have $g(y) - g(z) \leq \iprods{\nabla{g}(y), y - z} \leq \norm{\nabla{g}(y)}_{x}^{\ast}\norms{y - z}_x \leq L_g\norms{y-z}_x$.
By exchanging $y$ and $z$, we finally get $\vert g(z) - g(y)\vert \leq L_g\norms{z-y}_x$.

If $f$ is strongly convex with a strong convexity parameter $\mu_f > 0$, then we have $\nabla^2{f}(x) \succeq \mu_f\Id$ for any $x\in\dom{f}$.
Therefore, we have $\norms{y-z}_2 \leq \frac{1}{\sqrt{\mu_f}}\norm{y-z}_x$.
This shows that $\vert g(y) - g(z)\vert \leq \bar{L}_g\norms{y-z}_2 \leq \frac{\bar{L}_g}{\sqrt{\mu_f}}\norm{y-z}_x$.
Hence, $g$ is $L_g$-Lipschitz continuous w.r.t. $\norms{\cdot}_x$ with $L_g := \frac{\bar{L}_g}{\sqrt{\mu_f}}$.
\end{proof}
%% End of the proof.

As an example, if $\dom{g}$ is contained in an affine subspace defined by $\Lc := \set{x \in\R^p \mid Ax = b}$, and $\nabla^2f(x)$ is uniformly positive definite on $\Lc$, then $g$ is $L_g$-Lipschitz and Lemma~\ref{le:bound_subgrad} still holds, but $f$ is still non-strongly convex.
In this case, we say that $f$ is restricted strongly convex.
Lemma~\ref{le:bound_subgrad} shows that the Lipschitz continuity w.r.t. the local norm $\norms{\cdot}_x$ of $f$ is weaker than the global Lipschitz continuity of $g$ since we only require the condition to hold on $\dom{f}\cap\dom{g}$.

%%% 2.4. Fundamental assumption, optimality condition and approximate solution
\subsection{Fundamental assumption and optimality condition}\label{subsec:opt_cond}
Throughout this paper, we rely on the following fundamental assumption:
\begin{assumption}\label{as:A1}
${\dom{F}} \!:=\! \dom{f}\cap\dom{g} \!\neq\!\emptyset$. 
The solution set $\Xc^{\star}$ of \eqref{eq:composite_cvx} is nonempty.
\end{assumption}
Assumption~\ref{as:A1} is {a standard one that is} required in any solution method.
Throughout this paper, we assume that Assumption~\ref{as:A1} holds without recalling it.

The optimality condition associated with \eqref{eq:composite_cvx} becomes
\myeq{eq:opt_cond}{
0 \in \nabla{f}(x^{\star}) +  \partial{g}(x^{\star}).
}
This condition is necessary and sufficient for $x^{\star}$ to be an optimal solution of \eqref{eq:composite_cvx}.
For any $H \in \Spd^p_{++}$, we can reformulate this optimality condition as a fixed-point condition:
\myeqn{
x^{\star} = \prox_{g}^{H}\left(x^{\star}  - H^{-1}\nabla{f}(x^{\star})\right).
}
This formulation shows that $x^{\star}$ is a fixed point of $\mathbb{T}_{g}^H(\cdot) := \prox_{g}^{H}(\cdot - H^{-1}\nabla{f}(\cdot))$.

%%% 3. A new inexact proximal Newton method for inverse covariance estimation.
\section{A Conceptual Homotopy Proximal Variable-Metric Framework}\label{sec:methods}
In this section, we introduce a novel parameterization of the optimality condition \eqref{eq:opt_cond} and propose a conceptual framework for designing homotopy proximal variable-metric methods for solving  \eqref{eq:composite_cvx}.

%%% 3.1. A novel parametrization of the optimality condition.
\subsection{Parametrization of the optimality condition}\label{subsec:re-parameterization}
Given $x^0\in\dom{F}$, we compute a subgradient $\xi^0\in\partial{g}(x^0)$.
Then, we parameterize $f$ as follows:
\myeq{eq:f_tau}{
f_{\tau}(x) :=  \tau f(x) - (1 - \tau)\iprods{\xi^0, x},
}
where $\tau \in [0, 1]$.
Clearly, $f_1(x) = f(x)$,  $\nabla{f_{\tau}}(x) = \tau \nabla{f}(x) - (1- \tau)\xi^0$, and $\nabla^2{f_{\tau}}(x) = \tau\nabla^2{f}(x)$.
In addition, $\dom{f_{\tau}} = \dom{f}$ for any $\tau \in (0, 1]$.
Note that if we can choose $\xi^0\in\partial{g}(x^0)$ such that $\xi^0 = \boldsymbol{0}^p$, then $f_{\tau}(x)$ in \eqref{eq:f_tau} reduces to $f_{\tau}(x) = \tau f(x)$.

Next, we consider the following composite convex optimization problem derived from \eqref{eq:composite_cvx}:
\myeq{eq:auxiliary_prob1}{
x_{\tau}^{\ast} = \argmin_{x\in\R^p}\Big\{ F_{\tau}(x) :=  f_{\tau}(x) + g(x) \Big\}.
}
This problem is similar to  \eqref{eq:composite_cvx} and can be considered as a parametric perturbation instance of \eqref{eq:composite_cvx}.
The optimality condition of this problem {is given by}
\myeq{eq:new_reparam_opt_cond}{
0 \in \nabla{f_{\tau}}(x^{\ast}_{\tau}) +  \partial{g}(x^{\ast}_{\tau}) \equiv \tau \nabla{f}(x^{\ast}_{\tau}) -  (1-\tau)\xi^0 + \partial{g}(x^{\ast}_{\tau}),
}
which is necessary and sufficient for $x^{\ast}_{\tau}$ to be an optimal solution of \eqref{eq:auxiliary_prob1}.
We call this condition a parametric optimality condition of \eqref{eq:composite_cvx}.

From the optimality condition \eqref{eq:new_reparam_opt_cond}, we can show that
\begin{itemize}
\item If $\tau =1$, then \eqref{eq:new_reparam_opt_cond} becomes $0 \in   \nabla{f}(x^{\ast}_1) + \partial{g}(x^{\ast}_1)$, which is exactly the original optimality condition \eqref{eq:opt_cond} of \eqref{eq:composite_cvx}.
Hence, $x^{\ast}_1 = x^{\star}$ is an exact optimal solution of \eqref{eq:composite_cvx}.
\item If $\tau = 0$, then \eqref{eq:new_reparam_opt_cond} reduces to $\xi^0 \in \partial{g}(x^{\ast}_0)$.
Therefore, we can choose $x^{\ast}_0 = x_0$, the initial point, as an optimal solution of 
{\eqref{eq:auxiliary_prob1} at $\tau = 0$.}
\end{itemize}
Our main idea is to start from a small value $\tau_0 \approx 0$ and follow a homotopy path on $\tau$ to {find an} approximate solution of $x^{\ast}_{\tau}$ at $\tau \approx 1$.
As we will show later, we do not start from $\tau_0 = 0$, but from a sufficiently small value $\tau_0 > 0$.

Note that, when $\tau > 0$, we can write \eqref{eq:new_reparam_opt_cond} as
\myeqn{
0 \in \nabla{f}(x^{\ast}_{\tau}) - \left(\tfrac{1}{\tau} - 1\right)\xi^0 + \tfrac{1}{\tau}\partial{g}(x^{\ast}_{\tau}).
}
If $g(\cdot) = \rho\norms{\cdot}_1$, an $\ell_1$-regularizer, for a given regularization parameter $\rho > 0$, then when $\tau$ is close to zero, the weight $\frac{1}{\tau}$ {on $g$ is large and the solution of \eqref{eq:auxiliary_prob1} is expected to be very} sparse.
This potentially {can reduce} the computational complexity of the underlying optimization method by working on sparse vectors or matrices.
This property of the regularizer $g$  is {also expected} in other applications such as low-rank and group sparsity models.

%%% Remark 1.
\begin{remark}\label{re:comparison}
The formulation \eqref{eq:new_reparam_opt_cond} is new and it does not reduce to any existing homotopy formulation including \cite[Formula 4.2.26]{Nesterov2004} to the best of our knowledge. 
This formulation {is expected to lead to more efficient homotopy-type algorithms for
solving} sparse and low-rank convex optimization as explained above.
\end{remark}

%%% 3.2. A fixed-point interpretation of the parametric optimality condition
\subsection{A fixed-point interpretation of the parametric optimality condition}

{Recall
 the optimality condition \eqref{eq:new_reparam_opt_cond} given as} $0 \in \nabla{f_{\tau}}(x^{\ast}_{\tau}) + \partial{g}(x^{\ast}_{\tau})$.
{By} using the scaled proximal operator $\prox_{g}^H$, we can {reformulate} \eqref{eq:new_reparam_opt_cond} into a fixed-point problem:
\myeq{eq:fixed_point1}{
x^{\ast}_{\tau} = \prox^{H}_{\frac{1}{\tau}g}\left( x^{\ast}_{\tau} - H^{-1}\big(\nabla{f}(x^{\ast}_{\tau}) - (\tfrac{1}{\tau}-1)\xi^0\big) \right),
}
for any $H\in\Spd^p_{++}$.
Let us define the following mapping for any $x\in\dom{F}$:
\myeq{eq:generalized_grad}{
G^H_{\tau}(x) = H\Big(x - \prox^{H}_{\tfrac{1}{\tau}g}\left( x - H^{-1}\big(\nabla{f}(x)  - (\tfrac{1}{\tau} - 1)\xi^0\big)\right)\Big).
}
Clearly, \eqref{eq:fixed_point1} is equivalent to $G_{\tau}^H(x^{\ast}_{\tau}) = 0$.
We call $G_{\tau}^H$ the scaled generalized gradient mapping of the parametric problem \eqref{eq:auxiliary_prob1}.
The most common case is $H = \frac{1}{\gamma}\Id$ as mentioned above for some $\gamma > 0$.
Then, $G_{\tau}^H$ reduces to the standard generalized gradient mapping \cite{Nesterov2004}.

%%%% 3. Conceptual framework of homotopy proximal variable-metric methods
\subsection{Conceptual framework of homotopy proximal variable-metric methods}
We first describe our conceptual three-stage proximal variable-metric algorithm as in Algorithm~\ref{alg:A1}.

%%%%%%%%%%%%%%%%%%%%%%%%%%%%%%%%%%%%%%%%%%
%%% + Algorithm 1 - Homotopy proximal variable-metric framework.
%%%%%%%%%%%%%%%%%%%%%%%%%%%%%%%%%%%%%%%%%%
\begin{algorithm}[H]\caption{(\textit{A Conceptual Three-Stage Proximal Variable-Metric Algorithm})}\label{alg:A1}
\begin{algorithmic}[1]
\State\textbf{Stage 1}~(\textbf{Find an initial point}): 
\State\hspace{0.2cm}\label{a1step:p0_step1} Choose $\tau_0 \in (0, 1)$, and an appropriate initial point $x^0\in\dom{F}$. Evaluate $\xi^0\in\partial{g}(x^0)$.
\State\textbf{Stage 2}~(\textbf{Homotopy scheme}): ~\textrm{For $k = 0$ to $k_{\max}$, perform}
\State\hspace{0.2cm}\label{a1step:p1_step2} Update $\tau_{k+1}$ from $\tau_k$ such that $0 < \tau_k < \tau_{k+1} \leq 1$. 
\State\hspace{0.2cm}\label{a1step:p1_step4} Evaluate $\nabla{f}(x^k)$ and $H_k$, and update $x^{k+1}$ by approximately solving
\myeq{eq:homotopy_method}{
x^{k+1}  :\approx \prox_{\frac{1}{\tau_{k+1}}g}^{H_k}\Big(x^k - H_k^{-1}\left(\nabla{f}(x^k) - \big(\tfrac{1}{\tau_{k+1}} - 1\big)\xi^0\right)\Big).
}
\State\hspace{0cm}\textbf{Stage 3}~(\textbf{Solution refinement}): Fix $\tau_{k}$ and perform \eqref{eq:homotopy_method} until a desired solution is achieved.
\end{algorithmic}
\end{algorithm}
%%% End of the algorithm.
%%%%%%%%%%%%%%%%%%%%%%%%%%%%%%%%%%%%%%%%%%

\noindent We will provide the details of each stage in the sequel based on 
{some appropriate assumptions for} \eqref{eq:composite_cvx}.
The main step of Algorithm~\ref{alg:A1} is \eqref{eq:homotopy_method}, where we need to evaluate the scaled proximal operator $\prox_{\frac{1}{\tau}g}^H(\cdot)$.
Depending on the choice of the variable matrix $H_k$, we obtain different methods:
\begin{itemize}
\item If $H_k$ is diagonal, then  we obtain a homotopy proximal gradient method.
\item If $H_k$ approximates $\nabla^2{f}(x^k)$, then we obtain a homotopy proximal quasi-Newton method.
\item If $H_k = \nabla^2{f}(x^k)$, then we obtain a homotopy proximal Newton method.
\end{itemize}
The choice of {an} initial point $x^0$, the initial value $\tau_0$ of the parameter $\tau$, the update rule of $\tau_k$, and the approximation rule of \eqref{eq:homotopy_method} in Algorithm~\ref{alg:A1} will be specified in the sequel.

%%% 3.4. Inexact proximal Newton scheme.
\subsection{Inexact proximal Newton scheme}
Let  $H_k = \nabla^2{f}(x^k)$.
The exact evaluation of the scaled proximal operator in \eqref{eq:homotopy_method} is equivalent to solving the following convex subproblem:
\myeq{eq:cvx_subprob_k}{
\bar{x}^{k+1} := \argmin_{x\in\R^p}\Big\{ \Pc_k(x) := \iprods{\nabla{f_{k\!+\!1}}(x^k), x - x^k} + \tfrac{1}{2}\iprods{\nabla^2{f}(x^k)(x - x^k), x - x^k} + \tfrac{1}{\tau_{k+1}}g(x) \Big\},{\!\!\!}
}
where $\nabla{f}_{k+1}(x^k) := f(x^k) - \big(\tfrac{1}{\tau_{k+1}}-1\big)\xi^0$.
In this case, we can write $\bar{x}^{k+1}$ as 
\myeqn{
\bar{x}^{k+1} = \prox_{\frac{1}{\tau_{k+1}}g}^{\nabla^2{f}(x^k)}\left(x^k - \nabla^2{f}(x^k)^{-1}\nabla{f_{k+1}}(x^k) \right), 
}
the exact solution of \eqref{eq:cvx_subprob_k}.

When $g$ is nontrivial (e.g., not a linear function), we can only approximate the true solution $\bar{x}^{k+1}$ of \eqref{eq:cvx_subprob_k} by an approximation $x^{k+1}$ {such that}
\myeq{eq:homotopy_method2}{
x^{k+1}  :\approx \prox_{\frac{1}{\tau_{k+1}}g}^{\nabla^2{f}(x^k)}\left(x^k -\nabla^2{f}(x^k)^{-1}\nabla{f_{k+1}}(x^k)\right).
}
Here the approximation ``$:\approx$'' is defined explicitly {next} in Definition~\ref{de:approx_prox}.

%%% Definition 3.1.
\begin{definition}\label{de:approx_prox}
Let $\bar{x}^{k+1}$ be the exact solution of \eqref{eq:cvx_subprob_k}, and $\delta_k \geq 0$ be a given accuracy.
We say that $x^{k+1}$ is a $\delta_k$-approximate solution to $\bar{x}^{k+1}$, denoted by $x^{k+1} :\approx \bar{x}^{k+1}$ as in \eqref{eq:homotopy_method2}, if
\myeq{eq:approx_prox}{
\Pc_k(x^{k+1}) - \Pc_k(\bar{x}^{k+1})  \leq \tfrac{\delta_k^2}{2}.
}
\end{definition}
Using this definition, we have the following result, see \cite[Lemma 3.2.]{TranDinh2013e}.
%%% Fact 1.
\begin{fact}\label{fact:fact_k} $\frac{1}{2}\norms{x^{k+1}  - \bar{x}^{k+1}}_{x^k}^2 \leq \Pc_k(x^{k+1}) - \Pc_k(\bar{x}^{k+1})$.
Consequently, combining this inequality and \eqref{eq:approx_prox}, we can show that if \eqref{eq:approx_prox} holds, then
\myeq{eq:approx_prox2}{
\delta(x^k) := \norms{x^{k+1} - \bar{x}^{k+1}}_{x^k} \leq \delta_k.
}
\end{fact}
The condition \eqref{eq:approx_prox} can be guaranteed by using several optimization methods in the literature such as accelerated proximal-gradient \cite{Beck2009,Nesterov2004,Schmidt2011}, ADMM \cite{Boyd2011}, or semi-smooth Newton-CG augmented Lagrangian methods \cite{yang2015sdpnalp}.
In Subsection~\ref{subsec:dual_approach_for_cvx_subprob3}, we approximately compute \eqref{eq:cvx_subprob_k} {via solving its dual.}

We define the following local distances to measure the distance {of approximations $x^{k+1}$ and ${x}^k$} to the true solution $x^{\ast}_{\tau_{k+1}}$ of the parameterized problem \eqref{eq:new_reparam_opt_cond}:
\myeq{eq:local_distances22}{
\lambda_{k+1} := \Vert x^{k+1} - x^{\ast}_{\tau_{k+1}}\Vert_{x^{\ast}_{\tau_{k+1}}}~~~\text{and}~~~\hat{\lambda}_{k} := \Vert x^{k} - x^{\ast}_{\tau_{k+1}}\Vert_{x^{\ast}_{\tau_{k+1}}}.
}
These metrics will be used to analyze the convergence of our methods.
%In the next two sections, we consider three cases: $f\in\Fc^{1,1}_{L,\mu}$ (i.e. strong convexity and smoothness), $f$ is self-concordant, and $f$ is self-concordant barrier.

%%%% 3. Convergence and iteration-complexity analysis.
\section{Convergence and iteration-complexity analysis}\label{sec:three_classes_of_models}
We analyze the convergence and iteration-complexity of Algorithm~\ref{alg:A1} for solving \eqref{eq:composite_cvx} under three different {subclasses of $f$ and $g$.}

%%% Linear convergence under local strong convexity and Lipschitz gradient assumption
\subsection{Linear convergence for the smooth and strongly convex case}\label{sec:homo_prox_grad_alg} 
The first class of models is when $f$ and $g$ in \eqref{eq:composite_cvx} satisfies the following assumption.

%%% Assumption A3.
\begin{assumption}\label{as:A3}
Assume that $f$ is $\mu_f$-strongly convex and $L_f$-smooth.
The function $g$ is $L_g$-Lipschitz continuous on $\dom{g}$.
\end{assumption}
%% End of Assumption A.3.
Under Assumption~\ref{as:A3}, Algorithm~\ref{alg:A1} only has \textbf{Stage~2} and we  do not need to perform \textbf{Stage~1} and \textbf{Stage~3} of Algorithm~\ref{alg:A1}.
We can start from any starting point $x^0\in\dom{F}$.
We show that Algorithm~\ref{alg:A1} achieves a global linear convergence rate.
%under an additional assumption that $g$ is $L_g$-Lipschitz continuous on $\dom{g}$.
This result is stated in the following theorem, whose proof can be found in Apppendix~\ref{subsec:linear_convergence1}.
%Note that if $f$ satisfies Assumption~\ref{as:A3}, then the standard proximal Newton also achieves a linear convergence rate.

%%% Theorem 4.
\begin{theorem}\label{th:linear_convergence1}
Under Assumption~\ref{as:A3}, let $m$ and $L$ be two constants such that $0 < m \leq L <+\infty$ and $\omega := \frac{1}{m}\sqrt{(L-2\mu_f)m + L_f^2} < 1$.
For any given $\tau_0 \in (0, 1)$ and $x^0\in\dom{F}$, we define
\myeq{eq:quantities1}{
C := \frac{\norms{\nabla{f}(x^0) + \xi^0}_2}{\mu_f}~~\text{and}~~~\sigma := \frac{1-\tau_0 + \tau_0\omega\Gamma}{1-\tau_0 + \tau_0\Gamma} \in (\omega, 1),~~\text{where}~~\Gamma := \frac{\norms{\nabla{f}(x^0) + \xi^0}_2}{\omega(L_g + \norms{\xi^0}_2)}.
}
Let $\set{(x^k, \tau_k)}$ be the sequence generated by the exact scheme \eqref{eq:homotopy_method} in Algorithm~\ref{alg:A1}, where $H_k \in \Spd^p_{++}$ is chosen such that $m\Id \preceq H_k \preceq L\Id$ and $\tau_k$ is updated by
\myeq{eq:sigma_k}{
\tau_k := 1 -  \frac{(1-\tau_0)\sigma^k}{\tau_0 + (1-\tau_0)\sigma^k}.
}
Then, for any $k\geq 0$, we have $\norms{x^k - x^{\ast}_{\tau_k}}_2 \leq C \sigma^k$ and $0 < 1-\tau_k \leq \frac{(1-\tau_0)}{\tau_0}\sigma^k$.
Consequently, both sequences $\set{ \norms{x^k - x^{\ast}_{\tau_k}}_2}$ and $\set{1 - \tau_k}$ globally converge to zero at a linear rate.

The sequence $\set{x^k}$ also satisfies $\norms{x^k - x^{\star}}_2 \leq \hat{C}\sigma^k$, where $\hat{C} := C +  \frac{(1-\tau_0)\omega(L_g + \norms{\xi^0}_2)}{\tau_0^3\mu_f}$.
Hence, $\set{x^k}$ globally converges to { a solution $x^{\star}$} of \eqref{eq:composite_cvx} at a linear rate.
\end{theorem}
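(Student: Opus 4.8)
The plan is to set up a homotopy/continuation argument in which I control two quantities simultaneously: the distance $\|x^k - x^{\ast}_{\tau_k}\|_2$ from the current iterate to the current parametric solution, and the gap $1-\tau_k$. The key structural fact I would exploit is that the map $x \mapsto \prox^{H_k}_{\frac{1}{\tau}g}(x - H_k^{-1}(\nabla f(x) - (\frac{1}{\tau}-1)\xi^0))$ is a contraction (in the appropriate metric) around $x^{\ast}_{\tau}$ whenever $f \in \mathcal{F}^{1,1}_{L,\mu}$ and $m\Id \preceq H_k \preceq L\Id$. The contraction factor is exactly $\omega = \frac{1}{m}\sqrt{(L-2\mu_f)m + L_f^2} < 1$: this comes from combining nonexpansiveness of $\prox^{H_k}_{\frac{1}{\tau}g}$ in the $H_k$-norm (equation~\eqref{eq:nonexpansiveness}) with the standard estimate $\|(\Id - H_k^{-1}\nabla f)(x) - (\Id - H_k^{-1}\nabla f)(y)\|_{H_k}^2 \le \omega^2\|x-y\|_{H_k}^2$, which follows from $\mu_f\Id \preceq \nabla^2 f \preceq L_f\Id$ and the spectral bounds on $H_k$, and finally converting $H_k$-norm bounds to $\ell_2$-norm bounds at a cost controlled by $m,L$.

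First I would estimate, for the single homotopy step from $\tau_k$ to $\tau_{k+1}$, a recursion of the form $\|x^{k+1} - x^{\ast}_{\tau_{k+1}}\|_2 \le \omega\,\|x^k - x^{\ast}_{\tau_{k+1}}\|_2$, and then bound $\|x^k - x^{\ast}_{\tau_{k+1}}\|_2 \le \|x^k - x^{\ast}_{\tau_k}\|_2 + \|x^{\ast}_{\tau_k} - x^{\ast}_{\tau_{k+1}}\|_2$. The second term is the \emph{sensitivity of the parametric solution in $\tau$}: using the optimality condition~\eqref{eq:new_reparam_opt_cond} and strong convexity of $f_\tau$ (with parameter $\tau\mu_f$), I would show $\|x^{\ast}_{\tau_k} - x^{\ast}_{\tau_{k+1}}\|_2 \le \frac{|\,1/\tau_{k+1} - 1/\tau_k\,|}{\mu_f}\cdot(\text{bound on a subgradient of }g\text{ plus }\|\xi^0\|_2)$, where the Lipschitz continuity of $g$ on $\dom g$ supplies the bound $L_g$ on subgradients. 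Feeding the explicit update rule~\eqref{eq:sigma_k} for $\tau_k$ — which is engineered precisely so that $\frac{1-\tau_k}{\tau_k}$ decays geometrically with ratio $\sigma$ — turns this into $\|x^{\ast}_{\tau_k}-x^{\ast}_{\tau_{k+1}}\|_2 \le (\text{const})\cdot\sigma^k$. Then an induction on $k$ with the contraction factor $\omega$ and the geometric source term, with the constants $C$, $\Gamma$, $\sigma$ chosen exactly as in~\eqref{eq:quantities1}, closes to give $\|x^k - x^{\ast}_{\tau_k}\|_2 \le C\sigma^k$; the bound $0 < 1-\tau_k \le \frac{1-\tau_0}{\tau_0}\sigma^k$ is immediate from~\eqref{eq:sigma_k}. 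I would also need to check the base case $k=0$, i.e. that $\|x^0 - x^{\ast}_{\tau_0}\|_2 \le C$, which again follows from strong convexity of $f_{\tau_0}$ applied at $x^0$, the optimality of $x^{\ast}_{\tau_0}$, and the definition of $C$ in terms of $\|\nabla f(x^0)+\xi^0\|_2/\mu_f$.

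For the final claim, $\|x^k - x^{\star}\|_2 \le \hat C\sigma^k$: since $x^{\star} = x^{\ast}_1$ by the $\tau=1$ analysis in Subsection~\ref{subsec:re-parameterization}, I would write $\|x^k - x^{\star}\|_2 \le \|x^k - x^{\ast}_{\tau_k}\|_2 + \|x^{\ast}_{\tau_k} - x^{\ast}_1\|_2 \le C\sigma^k + \|x^{\ast}_{\tau_k} - x^{\ast}_1\|_2$, and then bound the solution-path tail $\|x^{\ast}_{\tau_k} - x^{\ast}_1\|_2$ by summing the per-step sensitivity bound over all $j \ge k$; the geometric series evaluates to something of order $\frac{(1-\tau_0)\omega(L_g+\|\xi^0\|_2)}{\tau_0^3\mu_f}\sigma^k$, which is exactly the extra term in $\hat C$ (the $\tau_0^3$ in the denominator appearing from bounding $1/\tau_j \le 1/\tau_0$ and summing $\frac{1-\tau_j}{\tau_j}$-type quantities).

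The main obstacle I anticipate is getting the sensitivity estimate $\|x^{\ast}_{\tau} - x^{\ast}_{\tau'}\|_2$ clean and with the right dependence on $\tau, \tau'$: the naive bound has a $1/(\tau\tau')$ factor from the $\frac{1}{\tau}-\frac{1}{\tau'}$ term, and one must carefully combine it with the geometric decay of $1-\tau_k$ so that the constants collapse to the stated $C$, $\Gamma$, $\hat C$ rather than blowing up. A secondary technical point is the $H_k$-norm versus $\ell_2$-norm conversions: each conversion costs a factor $\sqrt{L/m}$, and one has to confirm these are already absorbed into the definition of $\omega$ (note $\omega$ as defined involves $m$ but not $L$ in a way that suggests the conversions are handled through the $\iprod{(\Id - H^{-1}\nabla f)(x) - (\Id - H^{-1}\nabla f)(y), H((\Id-H^{-1}\nabla f)(x) - \cdots)}$ inner-product expansion rather than crude norm equivalence), so the bookkeeping must follow the argument in Appendix~\ref{subsec:linear_convergence1} rather than a lossy shortcut.
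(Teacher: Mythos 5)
Your proposal follows essentially the same route as the paper's proof in Appendix~B.1: the one-step contraction bound $\|x^{k+1}-x^*_{\tau_{k+1}}\|_2 \le \omega\|x^k - x^*_{\tau_{k+1}}\|_2$ via nonexpansiveness of the scaled prox in the $H_k$-norm plus the strong-convexity/smoothness inner-product expansion (the paper's Lemma~\ref{le:contraction_of_prox_grad}); the sensitivity estimate $\|x^*_{\tau_k}-x^*_{\tau_{k+1}}\|_2 \lesssim \frac{|\tau_{k+1}-\tau_k|}{\tau_k\tau_{k+1}\mu_f}(L_g+\|\xi^0\|_2)$ from the parametric optimality conditions and monotonicity (Lemma~\ref{le:auxi_results}(a)); an induction closing under the update rule~\eqref{eq:sigma_k}, which is designed so that $\tau_{k+1} \le \tau_k + C_1\sigma^k\tau_k\tau_{k+1}$; the base case $\|x^0-x^*_{\tau_0}\|_2 \le C$ from Lemma~\ref{le:auxi_results}(a); and a final triangle inequality to $x^\star = x^*_1$. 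The one minor imprecision is the displayed claim that $\|(\Id - H_k^{-1}\nabla f)(x) - (\Id - H_k^{-1}\nabla f)(y)\|_{H_k}^2 \le \omega^2\|x-y\|_{H_k}^2$: the contraction factor $\omega$ is not an $H_k$-norm contraction constant but rather the factor in the mixed bound $m\|x^{k+1}-x^*\|_2^2 \le ((L-2\mu_f)+L_f^2/m)\|x^k-x^*\|_2^2$ that results after expanding the $H_k$-norm inner product and then bounding $H_k$ by $m\Id$ on the left and $L\Id$ on the right — you flag exactly this subtlety yourself at the end, so the plan is sound. The only other cosmetic difference is in the last step: you propose summing the per-step sensitivity over $j\ge k$, while the paper applies Lemma~\ref{le:auxi_results}(a) directly between $\tau_k$ and $1$; both give a tail of order $\sigma^k$ with a $\tau_0^{-3}$-type constant once $1-\tau_k \le \frac{(1-\tau_0)}{\tau_0}\sigma^k$ is plugged in.
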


Let us make {some remarks} on the result of Theorem~\ref{th:linear_convergence1}.
First, the condition $\omega < 1$ is equivalent to $(L - 2\mu_f) m + L_f^2 < m^2$. 
If we choose $m = L > 0$, then we have $L_f^2 < 2\mu_fL$ which leads to $L > \tfrac{L_f^2}{2\mu_f}$.
In this case, if we define $\gamma := \frac{2}{L + m} = \frac{1}{L}$, then \eqref{eq:homotopy_method} becomes
\myeqn{
x^{k+1} := \prox_{\frac{\gamma}{\tau_{k+1}}g}\left(x^k - \gamma\big(\nabla{f}(x^k) - (\tfrac{1}{\tau_{k+1}} - 1)\xi^0\big)\right),
}
which reduces to a  homotopy proximal gradient method.

To optimize the contraction factor, we need to minimize $1 - 2\mu_ft + L_f^2t^2$ over $t$.
This gives us $t = \frac{\mu_f}{L_f^2}$ showing  that $m = L = \frac{L_f^2}{\mu_f}$.
Hence, we must choose $H_k = \tfrac{L_f^2}{\mu_f}\Id$, and we obtain $\omega = 1- \tfrac{\mu_f}{L_f}$.
%= 1- \frac{1}{\kappa_f}
Another simple choice of $H_k$ is $H_k = \big(\frac{L + m}{2}\big)\Id$. 

Next, note that the convergence rate of $\set{\norms{x^k - x^{\star}}_2}$ in \eqref{eq:homotopy_method} is slower than in the standard proximal variable-metric method.
Its contraction factor is $\sigma$ defined in \eqref{eq:quantities1}.
However, \eqref{eq:homotopy_method}  possesses some {computational advantages  that} the standard proximal variable-metric method does not have as we will discuss in Section \ref{sec:num_exp}.

The linear convergence rate under Assumption~\ref{as:A3} is known from the literature for both gradient and Newton-type methods.
Nevertheless, our method is  new, which works on the {parameterized} function $f_{\tau}$ instead of $f$.
Our method also allows us to flexibly choose the variable matrix $H_k$ as long as it satisfies the condition of Theorem~\ref{th:linear_convergence1}.
Another appropriate choice of $H_k$ is a rank-one update as proposed in \cite{Becker2012a}.

\begin{remark}[Nesterov's accelerated variant]
Note that we can develop Nesterov's accelerated variant for \eqref{eq:homotopy_method} under Assumption \ref{as:A3}.
In this case, the convergence factor in Theorem \ref{th:linear_convergence1} will be improved from $1 - \frac{\mu_f}{L_f}$ to $1 - \sqrt{\frac{\mu_f}{L_f}}$.
However, we skip this modification in this paper.
\end{remark}

%%% 4.2. Linear convergence for self-concordant function $f$ without barrier.
\subsection{Linear convergence for self-concordant function $f$ without barrier}\label{subsec:model2}
We consider the second case where $f$ {and g satisfy the following assumption.} 
%Our convergence guarantee of the inexact homotopy proximal-Newton scheme \eqref{eq:homotopy_method2} requires the following assumption.

%%% Assumption 4.
\begin{assumption}\label{as:A4}
The function $f$ in \eqref{eq:composite_cvx} is standard self-concordant  $($see Definition~\ref{de:gen_sel_con_def}$)$.
The function $g$ is $L_g$-Lipschitz continuous w.r.t. the local norm $\norms{\cdot}_x$ defined by $f$ with a Lipschitz constant $L_g\in [0, +\infty)$ in $\dom{f}\cap\dom{g}$.
\end{assumption}
Note that, in Assumption \ref{as:A4}, we only require $f$ to be self-concordant,  {but}
\textbf{not necessary a self-concordant barrier}.
The class of self-concordant functions is much larger than the class of self-concordant barriers. As indicated in Proposition~\ref{pro:self_con_class}, any generalized self-concordant and strongly convex function is self-concordant.
In particular, it covers  a few representative applications presented in the introduction.
For other examples, we refer the reader to \cite{SunTran2017gsc,Tran-Dinh2013a}.

Under Assumption~\ref{as:A4}, Algorithm~\ref{alg:A1} has two stages: \textbf{Stage~1} finds an initial point, and \textbf{Stage~2} performs a homotopy scheme.
We can skip \textbf{Stage~3}.
For any initial value $\tau_0 \in (0, 1)$ of $\tau$, let us choose $\sigma \in (0.318642, 1)$ that solves the following inequation:
\myeq{eq:param_cond2}{
\frac{2L_g(1-\tau_0)(1 - \sqrt{\sigma})}{\tau_0 - 2L_g(1-\tau_0)(1-\sqrt{\sigma})} \leq \frac{\sqrt{\sigma - 0.01}}{10} - \frac{1}{18}. % ~~\text{and}~~\sigma  \geq \left(1 - \frac{0.0521\tau_0}{2L_g(1-\tau_0)}\right)^2 .
}
Note that there always exists $\sigma \in (0.318642, 1]$ that solves \eqref{eq:param_cond2}.
Theorem \ref{th:linear_convergence2b} below states the convergence of \eqref{eq:homotopy_method2} under the {Assumption \ref{as:A4}. Its}
%self-concordance of $f$, and the $L_g$-Lipschitz continuity of $g$ with respect to a local norm $\norm{\cdot}_x$, 
 proof can be found in Appendix \ref{apdx:th:linear_convergence2b}.

%%% Theorem 5b.
\begin{theorem}\label{th:linear_convergence2b}
Suppose that Assumption~\ref{as:A4} holds for \eqref{eq:composite_cvx}.
Let  $\tau_0\in (0, 1)$ and $\sigma \in (0, 1]$  be two constants satisfying \eqref{eq:param_cond2} and $\set{(x^k, \tau_k)}$ be the sequence generated by \eqref{eq:homotopy_method2}.
Moreover, $x^0$ and $\tau_0\in(0, 1)$ are chosen such that $\lambda_0 := \Vert x^0 - x^{\ast}_{\tau_0}\Vert_{x^{\ast}_{\tau_0}} \leq \beta$ with $\beta := 0.05$.
Let us choose $0 \leq \delta_k \leq \frac{\lambda_k}{113}$, and update the parameter $\tau_k$ as
\myeq{eq:sigma_choice_2b}{
\tau_{k+1} :=
{\left[ 1+  \frac{  \Delta_k\tau_k}{2L_g(1 + \Delta_k) 
-  \Delta_k\tau_k}\right] \tau_k}
\quad \text{where} \quad \Delta_k := \left( \frac{1}{10}\sqrt{\sigma - 0.01} - \frac{1}{18}\sqrt{\sigma}^k \right)\sqrt{\sigma}^k.
} 
Then, $\norms{x^k - x^{\ast}_{\tau_k}}_{x^{\ast}_{\tau_k}} \leq \beta\sigma^k$ for $k\geq 0$ and $0 < 1 - \tau_k  \leq \frac{(1-\tau_0)}{\tau_0}\sqrt{\sigma}^k$.
Therefore,  the sequences  $\{\norms{x^k - x^{\ast}_{\tau_k}}_{x^{\ast}_{\tau_k}}\}$ and $\set{1 - \tau_k}$ both globally {converge to zero at a} linear rate.
%\fbox{{Can change $\bar{\Delta}_k$ to $\Delta_k$?}}

Moreover, there exists $\hat{C} > 0$ such that $\Vert x^k - x^{\star}\Vert_{x^{\star}} \leq \hat{C}\sqrt{\sigma}^k$ for all $k\geq 0$. 
Hence, the sequence $\set{\Vert x^k - x^{\star}\Vert_{x^{\star}}}$ also globally converges to 
{zero} at a linear rate.
\end{theorem}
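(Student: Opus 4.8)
The plan is to prove the statement by an induction on $k$ that simultaneously controls the local distance $\lambda_k := \Vert x^k - x^{\ast}_{\tau_k}\Vert_{x^{\ast}_{\tau_k}}$ and the parameter gap $1-\tau_k$. The core of the argument is a single homotopy step estimate: starting from $x^k$ with $\lambda_k \le \beta\sigma^k$, I want to show that after applying the inexact proximal-Newton step \eqref{eq:homotopy_method2} at the \emph{next} parameter value $\tau_{k+1}$, the new point $x^{k+1}$ satisfies $\lambda_{k+1} \le \beta\sigma^{k+1}$. I would split this into two sub-estimates. First, a \textbf{perturbation estimate} quantifying how much the solution moves when $\tau$ changes from $\tau_k$ to $\tau_{k+1}$, i.e. bounding $\hat{\lambda}_k = \Vert x^k - x^{\ast}_{\tau_{k+1}}\Vert_{x^{\ast}_{\tau_{k+1}}}$ in terms of $\lambda_k$ and the step size $\tau_{k+1}-\tau_k$; this is where the Lipschitz constant $L_g$ of $g$ w.r.t.\ the local norm and self-concordance (to pass between $\norms{\cdot}_{x^{\ast}_{\tau_k}}$ and $\norms{\cdot}_{x^{\ast}_{\tau_{k+1}}}$ via the Dikin ellipsoid / $\omega$-type functions) enter. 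The precise form of the update \eqref{eq:sigma_choice_2b} is engineered so that $\hat{\lambda}_k$ stays below a threshold like $\beta + \text{(something small)}$. Second, a \textbf{contraction estimate} for one inexact proximal-Newton iteration at \emph{fixed} $\tau_{k+1}$: using the standard self-concordant proximal-Newton quadratic-convergence bound together with the inexactness bound $\delta_k \le \lambda_k/113$ (via Fact~\ref{fact:fact_k}), one gets $\lambda_{k+1} \lesssim \hat{\lambda}_k^2/(1-\hat{\lambda}_k)^2 + \delta_k$ or a similar expression. Chaining these, with the numerical constants $\beta = 0.05$, the factor $113$, and the threshold $0.318642$ chosen precisely so the recursion closes, yields $\lambda_{k+1} \le \beta\sigma^{k+1}$.

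The second half of the induction is the parameter-gap bound $0 < 1-\tau_{k+1} \le \frac{1-\tau_0}{\tau_0}\sqrt{\sigma}^{k+1}$. Here I would unfold the multiplicative update \eqref{eq:sigma_choice_2b}: writing $\frac{1-\tau_{k+1}}{\tau_{k+1}}$ in terms of $\frac{1-\tau_k}{\tau_k}$, the update is designed so that $\frac{1-\tau_{k+1}}{\tau_{k+1}} = \frac{1-\tau_k}{\tau_k}\cdot\frac{2L_g(1+\Delta_k) - \Delta_k\tau_k - \Delta_k\tau_k}{2L_g(1+\Delta_k)-\Delta_k\tau_k}$ — i.e.\ it contracts by roughly $1 - \frac{\Delta_k\tau_k}{2L_g(1+\Delta_k)}$ per step, and since $\Delta_k$ is of order $\sqrt{\sigma}^k$, telescoping the product $\prod_{j<k}(1-c\sqrt{\sigma}^j)$ and comparing with the geometric bound $\sqrt{\sigma}^k$ gives the claim. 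One must also check $\tau_{k+1}\in(\tau_k,1]$, i.e.\ that $\Delta_k>0$ (which holds because $\frac{1}{10}\sqrt{\sigma-0.01} > \frac{1}{18} \ge \frac{1}{18}\sqrt{\sigma}^k$ for $\sigma > 0.318642$) and that the denominator in \eqref{eq:sigma_choice_2b} is positive — again forced by inequality \eqref{eq:param_cond2}.

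For the final assertion, that $\Vert x^k - x^{\star}\Vert_{x^{\star}} \le \hat{C}\sqrt{\sigma}^k$, I would use the triangle inequality in the form $\Vert x^k - x^{\star}\Vert_{x^{\star}} \le \Vert x^k - x^{\ast}_{\tau_k}\Vert_{x^{\star}} + \Vert x^{\ast}_{\tau_k} - x^{\star}\Vert_{x^{\star}}$. The first term is controlled by $\lambda_k \le \beta\sigma^k$ after converting the local norm at $x^{\ast}_{\tau_k}$ to the one at $x^{\star}$ — legitimate because once $k$ is large both points lie in a common Dikin ellipsoid, so the norms are equivalent up to a universal factor (self-concordance); for small $k$ one absorbs the finitely many terms into $\hat{C}$. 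The second term, the distance from the homotopy solution $x^{\ast}_{\tau_k}$ to the true solution $x^{\star} = x^{\ast}_1$, is bounded by integrating the perturbation estimate from $\tau_k$ to $1$, giving something proportional to $L_g\cdot(1-\tau_k) \le L_g\frac{1-\tau_0}{\tau_0}\sqrt{\sigma}^k$. Adding the two and setting $\hat{C}$ to the resulting constant finishes the proof.

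I expect the \textbf{main obstacle} to be the perturbation estimate in the first half: bounding how $x^{\ast}_\tau$ varies with $\tau$ requires differentiating (or finite-differencing) the parametric optimality condition \eqref{eq:new_reparam_opt_cond} through the nonsmooth subdifferential $\partial g$, and then controlling the resulting bound uniformly in the self-concordant local norm, which changes with $\tau$ as well. This is where the Lipschitz-continuity-w.r.t.-local-norm hypothesis on $g$ (Assumption~\ref{as:A4}, via Lemma~\ref{le:bound_subgrad}) does the heavy lifting — it converts the $\tau$-derivative of the right-hand side $(\tfrac{1}{\tau}-1)\xi^0$ and the $\tfrac{1}{\tau}\partial g$ term into an $O(L_g)$ bound — but getting the exact numerical constants ($113$, $0.05$, $0.318642$, the coefficients $\tfrac{1}{10}, \tfrac{1}{18}$ in $\Delta_k$) to line up so that both inductive inequalities close simultaneously is the delicate bookkeeping that the appendix must carry out.
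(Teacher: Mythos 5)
Your plan matches the paper's proof at all the essential joints: the same single-step estimate decomposed into a change-of-$\tau$ perturbation (how far $x^{\ast}_\tau$ moves when $\tau$ moves, which is Lemma~\ref{le:auxi_results}(b) in the appendix, proved by finite-differencing the parametric optimality condition through $\partial g$ and invoking $\norms{\xi}_{x}^{\ast}\leq L_g$ for $\xi\in\partial g$) and a fixed-$\tau$ inexact proximal-Newton contraction (Lemma~\ref{le:contraction_of_prox_nt}), the same chaining $\hat{\lambda}_k\leq \Delta_k^{\ast}+\lambda_k/(1-\Delta_k^{\ast})$, and the same triangle inequality plus self-concordant norm-equivalence argument for the final $\Vert x^k - x^{\star}\Vert_{x^{\star}}$ bound. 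The identified obstacle -- the $\tau$-perturbation through the nonsmooth $\partial g$ -- is exactly where the paper spends Lemma~\ref{le:auxi_results}(b).

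The one place your plan goes wrong is the parameter-gap telescoping. The update \eqref{eq:sigma_choice_2b} is \emph{not} a multiplicative contraction of $\tfrac{1-\tau_k}{\tau_k}$; unfolding it gives an \emph{additive} decrement in $1/\tau$, namely $\tfrac{1}{\tau_{k+1}} = \tfrac{1}{\tau_k} - \tfrac{\Delta_k}{2L_g(1+\Delta_k)}$. Your displayed ratio formula contains an algebraic slip (the true ratio is $1-\tfrac{\Delta_k\tau_k}{2L_g(1+\Delta_k)(1-\tau_k)}$, not $1-\tfrac{\Delta_k\tau_k}{2L_g(1+\Delta_k)-\Delta_k\tau_k}$, and since both $\Delta_k$ and $1-\tau_k$ decay like $\sqrt{\sigma}^k$, this ratio is bounded away from $1$ rather than being $1-c\sqrt{\sigma}^j$). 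More importantly, a product $\prod_{j<k}(1-c\sqrt{\sigma}^j)$ converges to a \emph{positive} constant, not to zero, so it cannot deliver the claimed decay $1-\tau_k\lesssim\sqrt{\sigma}^k$. What actually works is the sum telescope
\begin{equation*}
\frac{1-\tau_k}{\tau_k} \;=\; \frac{1-\tau_0}{\tau_0} \;-\; \sum_{i=0}^{k-1}\frac{\Delta_i}{2L_g(1+\Delta_i)},
\end{equation*}
and the compatibility condition \eqref{eq:param_cond2} on $(\sigma,\tau_0)$ is engineered precisely so that $\tfrac{\Delta_i}{1+\Delta_i}\geq\bar{C}_0\sqrt{\sigma}^i$ with $\bar{C}_0=\tfrac{2L_g(1-\tau_0)(1-\sqrt{\sigma})}{\tau_0}$, which makes the partial sums of the geometric series match the initial gap $\tfrac{1-\tau_0}{\tau_0}$ and leaves exactly $\tfrac{1-\tau_0}{\tau_0}\sqrt{\sigma}^k$ as the remainder. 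The paper packages this as Lemma~\ref{le:contraction_sequence} applied with $M:=\bar C_0$ and $q:=\sqrt{\sigma}$. Your checks on $\Delta_k>0$ and the positivity of the denominator are correct but do not repair this step: without the additive structure one cannot see where the $\sqrt{\sigma}^k$ decay of $1-\tau_k$ comes from, nor why \eqref{eq:param_cond2} is the right compatibility condition.
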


The following result is a direct consequence of Theorem \ref{th:linear_convergence2b} and Lemma~\ref{le:bound_subgrad} when $g$ is $\bar{L}_g$-Lipschitz continuous in $\ell_2$-norm, and $f$ is strongly convex and generalized self-concordant.

\begin{corollary}
%\fbox{{This part should be stated separately in a remark or as a corollary.}}
Assume that $f$ is generalized self-concordant as defined in Definition~\ref{de:gen_sel_con_def} and $g$ is Lipschitz continuous with a Lipschitz constant $\bar{L}_g \geq 0$ in $\ell_2$-norm instead of $g$ being $L_g$-Lipschitz continuous w.r.t. $\norm{\cdot}_x$.
Assume additionally that $f$ is strongly convex with a strong convexity parameter $\mu_f > 0$.
Then, the conclusion of Theorem~\ref{th:linear_convergence2b} still holds with $L_g := \frac{\bar{L}_g}{\sqrt{\mu_f}}$.
\end{corollary}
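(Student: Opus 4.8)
The plan is to derive the corollary as an immediate consequence of Theorem~\ref{th:linear_convergence2b} together with the second part of Lemma~\ref{le:bound_subgrad}, with essentially no new work required. First I would invoke Proposition~\ref{pro:self_con_class}: since $f$ is generalized self-concordant and, by the added hypothesis, strongly convex with parameter $\mu_f > 0$, that proposition guarantees $f$ is standard self-concordant. This verifies the first half of Assumption~\ref{as:A4}.

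Next I would handle the regularizer. We are given that $g$ is $\bar{L}_g$-Lipschitz continuous in the $\ell_2$-norm, i.e. $\abs{g(y) - g(z)} \leq \bar{L}_g\norms{y-z}_2$ for all $y,z\in\dom{f}\cap\dom{g}$. By the second statement of Lemma~\ref{le:bound_subgrad}, strong convexity of $f$ with parameter $\mu_f$ implies $\nabla^2{f}(x)\succeq\mu_f\Id$ on $\dom{f}$, hence $\norms{y-z}_2\leq\frac{1}{\sqrt{\mu_f}}\norm{y-z}_x$, and therefore $g$ is $L_g$-Lipschitz continuous w.r.t. the local norm $\norms{\cdot}_x$ defined by $f$ with constant $L_g := \frac{\bar{L}_g}{\sqrt{\mu_f}}$. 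This verifies the second half of Assumption~\ref{as:A4} with this explicit value of $L_g$.

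With both parts of Assumption~\ref{as:A4} established, Theorem~\ref{th:linear_convergence2b} applies verbatim: choosing $\tau_0$, $\sigma$, $x^0$, and $\delta_k$ as in its statement (with $L_g = \frac{\bar{L}_g}{\sqrt{\mu_f}}$ entering the update rule~\eqref{eq:sigma_choice_2b} and the condition~\eqref{eq:param_cond2}), the generated sequence $\set{(x^k,\tau_k)}$ satisfies $\norms{x^k - x^{\ast}_{\tau_k}}_{x^{\ast}_{\tau_k}} \leq \beta\sigma^k$ and $0 < 1-\tau_k \leq \frac{1-\tau_0}{\tau_0}\sqrt{\sigma}^k$, and there exists $\hat{C} > 0$ with $\norm{x^k - x^{\star}}_{x^{\star}} \leq \hat{C}\sqrt{\sigma}^k$. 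Thus all conclusions of Theorem~\ref{th:linear_convergence2b} carry over, which is exactly the claim of the corollary.

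There is essentially no obstacle here; the only point requiring a moment's care is making sure the quantifier in Definition~\ref{de:f_lipschitz} and Lemma~\ref{le:bound_subgrad} — Lipschitz continuity \emph{w.r.t.} $\norms{\cdot}_x$ holds uniformly over $x\in\dom{f}\cap\dom{g}$ — is correctly delivered by the bound $\norms{y-z}_2\leq\frac{1}{\sqrt{\mu_f}}\norm{y-z}_x$, which indeed holds for \emph{every} such $x$ since $\mu_f$ is a global strong-convexity constant. One should also note the final remark in Lemma~\ref{le:bound_subgrad} that the converse fails, which is why the corollary is stated as a one-directional specialization rather than an equivalence; this does not affect the proof.
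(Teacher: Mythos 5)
Your proposal matches the paper's intended argument: the paper itself states the corollary as an immediate consequence of Theorem~\ref{th:linear_convergence2b} and Lemma~\ref{le:bound_subgrad}, and you correctly supply both ingredients — the conversion from $\ell_2$-Lipschitz continuity of $g$ to $\bar{L}_g/\sqrt{\mu_f}$-Lipschitz continuity w.r.t.\ the local norm via Lemma~\ref{le:bound_subgrad}, and the passage from generalized self-concordance plus strong convexity to self-concordance of $f$ via Proposition~\ref{pro:self_con_class}. The one imprecision, which you share with the paper, is that Proposition~\ref{pro:self_con_class} yields $\widehat{M}_f$-self-concordance with $\widehat{M}_f = M_f/(\sqrt{\mu_f})^{3-\kappa}$, not the \emph{standard} $(M_f=2)$ self-concordance that Assumption~\ref{as:A4} literally requires; invoking Theorem~\ref{th:linear_convergence2b} verbatim therefore tacitly assumes $\widehat{M}_f=2$, or else requires replacing $f$ by the rescaled function $\tfrac{\widehat{M}_f^2}{4}f$, which would also rescale the local norm and hence the effective value of $L_g$ entering the update rule~\eqref{eq:sigma_choice_2b}.
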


\begin{remark}\label{re:weaker_condition_on_g}
\normalfont
(a) 
The $L_g$-Lipschitz continuity of $g$ w.r.t. a local norm $\norm{\cdot}_x$ in Assumption \ref{as:A4} can be replaced by assuming that $\norms{\nabla{g}(x)}_{x}^{\ast} \leq L_g$ for some $\nabla{g}(x)\in\partial{g}(x)$ for any $x\in\dom{f}$ and $\xi^0 = \boldsymbol{0}^p\in\partial{g}(x^0)$.
By Lemma \ref{le:bound_subgrad}, we can easily see that  this condition is weaker than the $L_g$-Lipschitz continuity of $g$ w.r.t. $\norm{\cdot}_x$.
For example, if $f(x) = -\ln(x)$, and $g(x) = x^2$, then $\norms{\nabla{g}(x)}_{x}^2 = 1$ for all $x > 0$.
In this case, the conclusions of Theorem~\ref{th:linear_convergence2b} still hold.

\vspace{1ex}
(b) {Observe that in \eqref{eq:sigma_choice_2b}, if the rate of change
from $\Delta_k$ to $\Delta_{k+1}$ is slow so that $\Delta_{k+1} \approx 
\Delta_k$, then the rate of increment from $\tau_k$ to $\tau_{k+1}$ will 
become faster when $k$ increases. 
}
\end{remark}

%%% 4.3. Linear convergence under the self-concordant barrier of $f$
\subsection{Linear convergence under the self-concordant barrier of $f$}
When $f$ is a self-concordant barrier, we use {a} different analysis, and no longer require $g$ to be Lipschitz continuous as stated in the following assumption:
\begin{assumption}\label{as:A5}
The function $f$ is a $\nu_f$-self-concordant barrier as defined in Definition~\ref{de:gen_sel_con_def}, and $g$ is proper, closed, and convex.
For a given $x^0\in\dom{F}$, either the analytic center $x^{\star}_f$ of $f$ defined by \eqref{eq:analytical_center} on the interior of the {level} set $\Lc_{F}(x^0) := \set{x\in{\dom{F} \mid F(x)} \leq F(x^0)}$ exists or $\xi^0 = \boldsymbol{0}^p\in\partial{g}(x^0)$.
\end{assumption}
Under Assumption~\ref{as:A5}, Algorithm~\ref{alg:A1} also requires \textbf{Stage 1} and \textbf{Stage 2}, while we can skip \textbf{Stage 3}.
For any $\tau_0 \in (0, 1)$, we choose $\sigma\in  (0.318642, 1]$ such that
\myeq{eq:the_choice_of_sigma}{
%\hat{C}_0 := \sqrt{\frac{C}{10}} - \frac{10C}{9} > 0 ~~\text{and}~~
C_0 := \frac{\sqrt{\sigma - 0.01}}{10} - \frac{1}{18} > 0~~\text{and}~~\sigma \geq \left(1 - \frac{\tau_0C_0}{(1-\tau_0)(1 + C_0)(\sqrt{\nu_f} + \bar{c}_0)}\right)^2.
}
Here, $\bar{c}_0 := \theta_f\norms{\xi^0}_{x^{\star}_f}$ ($\theta_f$ is defined in Appendix~\ref{sec:apdx:proofs}) 
%\fbox{{$\theta_f$ should be $\nu_f$???}}
if $x^{\star}_f$ defined by \eqref{eq:analytical_center} exists, and $\bar{c}_0 := 0$, otherwise.
These two constants $\tau_0$ and $C_0$ always exist, and $C_0 \leq \frac{9}{20\sqrt{10}} \approx 0.14230$.
The following theorem states the convergence of 
{Algorithm 1}
%\eqref{eq:homotopy_method2} 
under the self-concordant barrier {assumption on} $f$, whose proof can be found in Appendix \ref{apdx:th:linear_convergence2}.

%%% Theorem 5.
\begin{theorem}\label{th:linear_convergence2}
Let us choose $\sigma \in (0.318642, 1]$ such that   \eqref{eq:the_choice_of_sigma} holds.
Let $\set{(x^k, \tau_k)}$ be the sequence generated by Algorithm~\ref{alg:A1} using \eqref{eq:homotopy_method2} with $\tau_0\in(0, 1)$ such that $\Vert x^0 - x^{\ast}_{\tau_0}\Vert_{x^{\ast}_{\tau_0}} \leq \beta$ with $\beta := 0.05$.
Let us choose $0 \leq \delta_k \leq \frac{\lambda_k}{113}$, and update $\tau_k$ as
\myeq{eq:sigma_choice}{
\tau_{k+1} := \left(1 + \frac{\Delta_k}{(1+ \Delta_k)(\sqrt{\nu_f} + \bar{c}_0)} \right)\tau_k,~~~\text{with}~~\Delta_k := \left( \frac{\sqrt{\sigma - 0.01}}{10} - \frac{\sqrt{\sigma}^k}{18} \right)\sqrt{\sigma}^k,
}
Then, $\norms{x^k - x^{\ast}_{\tau_k}}_{x^{\ast}_{\tau_k}} \leq \beta\sigma^k$ and $1 - \tau_k \leq \frac{C_0\sqrt{\sigma}^k}{(1 ~+~ C_0)(\sqrt{\nu_f} + \bar{c}_0)(1-\sqrt{\sigma})}$.
Therefore, the sequences $\{\norms{x^k - x^{\ast}_{\tau_k}}_{x^{\ast}_{\tau_k}}\}$ and $\set{1 - \tau_k}$ both globally {converge} to zero at a linear rate.

Moreover, if we choose $\sigma \in (0.318642, 1]$ such that  \eqref{eq:the_choice_of_sigma} holds and $\widetilde{C}_1 := \frac{C_0}{\tau_0(1 + C_0)(1-\sqrt{\sigma}) - C_0} > 0$ $($always exist$)$, then $\Vert x^k - x^{\star}\Vert_{x^{\star}} \leq \frac{\beta\sigma^k}{1 - \widetilde{C}_1\sqrt{\sigma}^k} + \widetilde{C}_1\sqrt{\sigma}^k$. Hence, the sequence $\set{x^k}$  converges to an optimal solution $x^{\star}$ of \eqref{eq:composite_cvx} at a linear rate.
\end{theorem}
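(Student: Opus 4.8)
The plan is to establish the two error bounds $\lambda_k := \norms{x^k - x^{\ast}_{\tau_k}}_{x^{\ast}_{\tau_k}} \leq \beta\sigma^k$ and $1 - \tau_k \leq \frac{C_0\sqrt{\sigma}^k}{(1+C_0)(\sqrt{\nu_f}+\bar{c}_0)(1-\sqrt{\sigma})}$ by a joint induction on $k$, the base case $k=0$ reducing to the hypothesis $\norms{x^0 - x^{\ast}_{\tau_0}}_{x^{\ast}_{\tau_0}}\leq\beta$ (which Stage~1 is designed to produce) and, for the $\tau$-bound, to the second inequality in \eqref{eq:the_choice_of_sigma}. Two ingredients drive the induction step: a \emph{homotopy estimate} controlling $\hat{\lambda}_k := \norms{x^k - x^{\ast}_{\tau_{k+1}}}_{x^{\ast}_{\tau_{k+1}}}$ (how far the path point moves as $\tau$ increases from $\tau_k$ to $\tau_{k+1}$), and a \emph{contraction estimate} for one inexact scaled proximal-Newton step \eqref{eq:homotopy_method2} with $H_k = \nabla^2 f(x^k)$. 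Throughout I would use the standard self-concordance toolbox: the Dikin-ellipsoid norm comparison $\norms{w}_u\leq(1-r)^{-1}\norms{w}_v$ whenever $r:=\norms{u-v}_v<1$, the inequality $\iprods{\nabla f(u)-\nabla f(v),\,u-v}\geq \frac{\norms{u-v}_v^2}{1+\norms{u-v}_v}$, and --- this is where the barrier hypothesis enters --- the gradient bound $\norms{\nabla f(x)}^{\ast}_x\leq\sqrt{\nu_f}$ at every $x\in\dom{f}$. I would also record first that the whole parametric path stays in the level set $\Lc_{F}(x^0)$, so that the analytic-center-based bound on $\norms{\xi^0}^{\ast}$ is legitimate along the path.

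For the homotopy estimate, write the parametric optimality condition \eqref{eq:new_reparam_opt_cond} at $\tau_k$ and at $\tau_{k+1}$: there are $\eta_j\in\partial g(x^{\ast}_{\tau_j})$ with $\eta_j = -\tau_j\nabla f(x^{\ast}_{\tau_j}) + (1-\tau_j)\xi^0$ for $j\in\set{k,k+1}$. Monotonicity of $\partial g$ gives $\iprods{\eta_k-\eta_{k+1},\,x^{\ast}_{\tau_k}-x^{\ast}_{\tau_{k+1}}}\geq 0$; splitting $\tau_k\nabla f(x^{\ast}_{\tau_k}) - \tau_{k+1}\nabla f(x^{\ast}_{\tau_{k+1}}) = \tau_k(\nabla f(x^{\ast}_{\tau_k})-\nabla f(x^{\ast}_{\tau_{k+1}})) - (\tau_{k+1}-\tau_k)\nabla f(x^{\ast}_{\tau_{k+1}})$ and using the self-concordance lower bound on the first inner product yields, with $r_k := \norms{x^{\ast}_{\tau_k}-x^{\ast}_{\tau_{k+1}}}_{x^{\ast}_{\tau_{k+1}}}$, the inequality $\tau_k\tfrac{r_k^2}{1+r_k}\leq(\tau_{k+1}-\tau_k)\big(\norms{\nabla f(x^{\ast}_{\tau_{k+1}})}^{\ast}_{x^{\ast}_{\tau_{k+1}}}+\norms{\xi^0}^{\ast}_{x^{\ast}_{\tau_{k+1}}}\big)r_k$. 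The barrier property bounds the first dual norm by $\sqrt{\nu_f}$, and a Dikin-type transfer from the analytic center $x^{\star}_f$ to $x^{\ast}_{\tau_{k+1}}\in\Lc_{F}(x^0)$ bounds the second by $\bar{c}_0$ (which is $0$ in the subcase $\xi^0=\boldsymbol{0}^p$). Cancelling $r_k$, solving out, and plugging in the update rule \eqref{eq:sigma_choice}, which is chosen exactly so that $\frac{\tau_{k+1}-\tau_k}{\tau_k} = \frac{\Delta_k}{(1+\Delta_k)(\sqrt{\nu_f}+\bar{c}_0)}$, gives $r_k\leq\Delta_k$. A norm transfer between $x^{\ast}_{\tau_k}$ and $x^{\ast}_{\tau_{k+1}}$ together with the induction hypothesis then yields $\hat{\lambda}_k\leq\frac{\lambda_k}{1-\Delta_k}+\Delta_k\leq\frac{\beta\sigma^k}{1-\Delta_k}+\Delta_k$.

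For the contraction estimate, observe that \eqref{eq:cvx_subprob_k} with target $\tau_{k+1}$ is precisely a scaled proximal-Newton subproblem for the self-concordant composite function $f_{\tau_{k+1}}+g$, so the standard local analysis of inexact proximal-Newton steps --- quadratic convergence of the exact step plus the perturbation $\norms{x^{k+1}-\bar{x}^{k+1}}_{x^k}\leq\delta_k$ from Fact~\ref{fact:fact_k}, transferred by a Dikin factor --- produces an estimate of the form $\lambda_{k+1}\leq\frac{\hat{\lambda}_k^2}{(1-\hat{\lambda}_k)^2}+\frac{\delta_k}{1-\hat{\lambda}_k}$, valid once $\hat{\lambda}_k<1$. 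Substituting the homotopy bound on $\hat{\lambda}_k$, the choice $\delta_k\leq\lambda_k/113\leq\beta\sigma^k/113$, and $\beta = 0.05$, one checks that the numbers $0.01$, $\tfrac1{10}$, $\tfrac1{18}$, $113$, $0.05$ built into $\Delta_k$, into \eqref{eq:the_choice_of_sigma}, and into \eqref{eq:sigma_choice} are calibrated so that the right-hand side is $\leq\beta\sigma^{k+1}$; this closes the induction for $\lambda_k$. For the $\tau$-estimate I would telescope $1-\tau_{k+1} = (1-\tau_k) - \frac{\tau_k\Delta_k}{(1+\Delta_k)(\sqrt{\nu_f}+\bar{c}_0)}$, use $\tau_k\geq\tau_0$ and $\Delta_k\leq\Delta_0=C_0$, and bound the geometric tail $\sum_{j\geq k}\Delta_j\leq\frac{(C_0+\frac1{18})\sqrt{\sigma}^k}{1-\sqrt{\sigma}}$; here the first part of \eqref{eq:the_choice_of_sigma} guarantees $C_0>0$ and the second part guarantees that the accumulated increments suffice to drive $\tau_k$ up to (and cap it at) $1$.

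For the last claim, write $\norms{x^k-x^{\star}}_{x^{\star}}\leq\frac{1}{1-\rho_k}\big(\norms{x^k-x^{\ast}_{\tau_k}}_{x^{\ast}_{\tau_k}}+\norms{x^{\ast}_{\tau_k}-x^{\star}}_{x^{\ast}_{\tau_k}}\big)$ with $\rho_k:=\norms{x^{\ast}_{\tau_k}-x^{\star}}_{x^{\star}}$, using the Dikin transfer between $x^{\ast}_{\tau_k}$ and $x^{\star}=x^{\ast}_1$. The monotonicity argument of the homotopy step, applied now between $\tau_k$ and $1$, bounds $\norms{x^{\ast}_{\tau_k}-x^{\star}}$ by a constant times $\frac{1-\tau_k}{\tau_k}(\sqrt{\nu_f}+\bar{c}_0)$, hence by $\widetilde{C}_1\sqrt{\sigma}^k$ after inserting the $\tau$-bound and identifying $\widetilde{C}_1 = \frac{C_0}{\tau_0(1+C_0)(1-\sqrt{\sigma})-C_0}$; combined with $\lambda_k\leq\beta\sigma^k$ this gives exactly $\norms{x^k-x^{\star}}_{x^{\star}}\leq\frac{\beta\sigma^k}{1-\widetilde{C}_1\sqrt{\sigma}^k}+\widetilde{C}_1\sqrt{\sigma}^k$, which tends to $0$ linearly. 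I expect the main obstacle to be the interlocking bookkeeping that makes the homotopy estimate consistent: one must confirm a priori that the central path and the iterates stay in $\Lc_F(x^0)$ (so $\norms{\xi^0}^{\ast}\leq\bar{c}_0$ is valid), and that the right-hand side of the monotonicity inequality is small enough to keep $r_k<1$ and to make $\hat{\lambda}_k<1$ --- both of which force the restrictive conditions \eqref{eq:the_choice_of_sigma} and the smallness of $\beta$ and of $\delta_k$. The two subcases of Assumption~\ref{as:A5} are treated uniformly once $\bar{c}_0$ is set to $0$ in the case $\xi^0=\boldsymbol{0}^p$.
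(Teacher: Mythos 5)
Your overall architecture matches the paper's: a joint induction on the local distance $\lambda_k$ and the gap $1-\tau_k$, driven by (i) a homotopy estimate obtained from the parametric optimality condition, monotonicity of $\partial g$, and the barrier bound $\norms{\nabla f(\cdot)}^{\ast}_{\cdot}\le\sqrt{\nu_f}$ together with the transfer bound $\norms{\xi^0}^{\ast}_{\cdot}\le\bar c_0$; (ii) a contraction estimate for one inexact proximal-Newton step; (iii) calibration of the constants to close the induction; (iv) a telescoping lower bound for $\tau_k$; and (v) a Dikin-transfer plus triangle inequality to bound $\norms{x^k-x^{\star}}_{x^{\star}}$. Steps (i), (iii), (iv), (v) agree with the paper's Appendix~\ref{apdx:th:linear_convergence2} in both content and method (the homotopy estimate is exactly \eqref{eq:lm11_bound_fx0_b2}, the product lower bound $\prod(1+a_i)\ge1+\sum a_i$ gives the $\tau_k$ bound, and the final estimate is the same triangle inequality). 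You also rightly flag that one needs the path and iterates to stay where the bound $\norms{\xi^0}^{\ast}_x\le\theta_f\norms{\xi^0}^{\ast}_{x_f^{\star}}$ is legitimate.

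There is, however, a genuine gap in step (ii). You assert that the ``standard local analysis'' of an inexact proximal-Newton step gives
$\lambda_{k+1}\le \frac{\hat{\lambda}_k^2}{(1-\hat{\lambda}_k)^2}+\frac{\delta_k}{1-\hat{\lambda}_k}$.
This is not what the step delivers here. The update \eqref{eq:homotopy_method2} uses the Hessian $H_k=\nabla^2 f(x^k)$, but both $\hat{\lambda}_k$ and $\lambda_{k+1}$ are measured in the fixed metric at $x^{\ast}_{\tau_{k+1}}$. That mismatch ($H_k$ vs.\ $H_k^{\ast}:=\nabla^2 f(x^{\ast}_{\tau_{k+1}})$) produces a correction term of the form $e_k:=(H_k^{\ast})^{-1}(H_k^{\ast}-H_k)(\bar x^{k+1}-x^k)$, which must be carried through the nonexpansiveness estimate for $\prox^{H_k^{\ast}}_{(1/\tau_{k+1})g}$. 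Doing this carefully (the paper's Lemma~\ref{le:contraction_of_prox_nt}) yields the weaker bound
$\lambda_{k+1}\le\Big(\frac{3-2\hat{\lambda}_k}{1-4\hat{\lambda}_k+2\hat{\lambda}_k^2}\Big)\hat{\lambda}_k^2+\frac{\delta_k}{1-\hat{\lambda}_k}$,
whose leading coefficient is $3$ at $\hat{\lambda}_k=0$ and grows to about $5$ on $[0,0.1145]$, rather than the coefficient $\approx 1$ in your estimate. The specific constants in the theorem --- $\beta=0.05$, $\delta_k\le\lambda_k/113$, the $0.01$, $1/10$, $1/18$ in $\Delta_k$, and the threshold $\sigma>0.318642$ --- are calibrated against that coefficient ($\le 5$), so your ``one checks'' step would be verifying arithmetic against the wrong constant. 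Your bound is more optimistic than what the argument actually yields, so the induction as written is not supported; you would need to re-derive the contraction with the $e_k$ correction in place (or cite the corresponding lemma) before the calibration check is meaningful.
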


Theorem \ref{th:linear_convergence2} shows {the} global linear convergence of our inexact proximal-Newton method for solving \eqref{eq:composite_cvx} under Assumption \ref{as:A5}.
Here, the constants $\sigma$ and $\beta$ {balance} between the contraction factor and the step-size of the homotopy parameter $\tau$.
The choice of $\sigma$ from \eqref{eq:the_choice_of_sigma} is conservative due to several rough estimates {in} our proof.
In practice, $\sigma$ can be {chosen to be} much smaller than one as observed in our numerical experiments.

%%% 5.5. Finding an initial point
\section{Stage 1: Finding an appropriate initial point}\label{subsec:initial_point}
While the variant of Algorithm~\ref{alg:A1} in Theorem~\ref{th:linear_convergence1} can start from any initial point $x^0\in\dom{F}$, the variants in  Theorem~\ref{th:linear_convergence2b} and Theorem~\ref{th:linear_convergence2} require an appropriate initial point $x^0$.
More precisely, we need to choose an initial value $\tau_0 \in (0, 1)$ and $x^0$ such that $\norms{x^0 - x^{\ast}_{\tau_0}}_{x^{\ast}_{\tau_0}} \leq \beta$ for a given $\beta := 0.05$.
We consider two cases: $g$ is $\mu_g$-strongly convex and $g$ is non-strongly convex.

\subsection{Inexact damped-step proximal-Newton scheme}
We can apply the following inexact damped-step proximal-Newton scheme proposed in \cite{TranDinh2013e} to find $x^0$.
Let us start from any initial point $\hat{x}^0\in\dom{F}$, compute {a} subgradient $\hat{\xi}^0\in\partial{g}(\hat{x}^0)$, and update:
\myeq{eq:damped_step_PN}{
\left\{\begin{array}{ll}
\hat{s}^{j+1} &:\approx \prox_{\frac{1}{\tau_0}g}^{\nabla^2{f}{(\hat{x}^j)}}\Big(\hat{x}^j - \nabla^2{f}(\hat{x}^j)^{-1}\big(\nabla{f}(\hat{x}^j) - (\frac{1}{\tau_0} - 1)\hat{\xi}^0\big)\Big) \\%\vspace{0.75ex}\\
\hat{x}^{k+1} &:= (1 - \alpha_j)\hat{x}^j + \alpha_j\hat{s}^{j+1},~~\text{with}~~\hat{\zeta}_j := \norms{\hat{s}^{j+1} - \hat{x}^j}_{\hat{x}^j} ~\text{and}~\alpha_j := \frac{\hat{\zeta}_j-\hat{\delta}_j}{(1 + \hat{\zeta}_j - \hat{\delta}_j)\hat{\zeta}_j}.
\end{array}\right.
}
Here, $0 \leq \hat{\delta}_j < \hat{\zeta}_j$ is the accuracy level defined as in Definition~\ref{de:approx_prox} %{with $\hat{\delta}_j = \|\hat{x}_j - x^*_{\tau_0}\|_{x^*_{\tau_0}}$ \fbox{Is this correct??}}, 
and we use {the ``hat'' notation
 for the iterates} to distinguish this procedure from Algorithm~\ref{alg:A1}.
The following proposition provides an estimation {on the number} 
of iterations {needed} to find the initial point $x^0$, whose proof can be found in \cite[Lemma 4.3.]{TranDinh2013e}.

%%% Proposition 7.
\begin{proposition}\label{pro:damped_step_PN}
Let $\set{\hat{x}^j}$ be  generated by \eqref{eq:damped_step_PN} with  $\hat{\delta}_j := \frac{\hat{\zeta}_j}{10}$, then after at most 
\myeqn{
\left\lfloor \frac{F_{\tau_0}(\hat{x}^0) - F_{\tau_0}(x^{\ast}_{\tau_0})}{\omega(0.9\beta)} \right\rfloor
}
iterations, we obtain $\hat{x}^{j_{\max}}$ such that $\norms{\hat{x}^{j_{\max}} - x^{\ast}_{\tau_0}}_{x^{\ast}_{\tau_0}} \leq \beta$, where $F_{\tau_0}(x) := f(x) - (\frac{1}{\tau_0} - 1)\iprods{\hat{\xi}^0, x} + \tfrac{1}{\tau_0}g(x)$, and $\omega(t) := t - \ln(1 + t)$.
\end{proposition}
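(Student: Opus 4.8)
\textbf{Proof plan for Proposition~\ref{pro:damped_step_PN}.}
The plan is to treat the iteration \eqref{eq:damped_step_PN} as a damped-step proximal-Newton method applied to the parameterized problem $\min_x F_{\tau_0}(x)$, where $F_{\tau_0}(x) = f(x) - (\tfrac{1}{\tau_0}-1)\iprods{\hat\xi^0,x} + \tfrac{1}{\tau_0}g(x)$. The key observation is that $f$ is self-concordant, hence so is $f_{\tau_0}(x) = f(x) - (\tfrac1{\tau_0}-1)\iprods{\hat\xi^0,x}$ (adding a linear term and scaling preserves standard self-concordance, and $\nabla^2 f_{\tau_0} = \nabla^2 f$ so the local norm $\norms{\cdot}_{\hat x^j}$ is unchanged). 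First I would set up the standard self-concordant descent machinery: using the scaled proximal subproblem that defines $\hat s^{j+1}$, the quantity $\hat\zeta_j = \norms{\hat s^{j+1}-\hat x^j}_{\hat x^j}$ plays the role of an (approximate) proximal-Newton decrement for $F_{\tau_0}$, and the inexactness $\hat\delta_j \le \hat\zeta_j/10$ is controlled via Fact~\ref{fact:fact_k}/Definition~\ref{de:approx_prox}.

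The main steps, in order: (i) Establish a per-iteration decrease of the form $F_{\tau_0}(\hat x^{j+1}) \le F_{\tau_0}(\hat x^j) - \omega(\gamma_j)$ for some $\gamma_j$ comparable to $(0.9)\hat\zeta_j$ (the factor $0.9$ absorbing the $10\%$ inexactness), where $\omega(t) = t - \ln(1+t)$; this is exactly the content of \cite[Lemma 4.3]{TranDinh2013e} and follows from the self-concordant upper bound on $F_{\tau_0}$ along the segment $\hat x^j \to \hat s^{j+1}$ together with the choice $\alpha_j = \tfrac{\hat\zeta_j - \hat\delta_j}{(1+\hat\zeta_j-\hat\delta_j)\hat\zeta_j}$ which is the exact minimizer of that upper bound. (ii) Observe that as long as $\norms{\hat x^j - x^{\ast}_{\tau_0}}_{x^{\ast}_{\tau_0}} > \beta$, the (approximate) decrement is bounded below: $\hat\zeta_j$ — and hence $\omega(\gamma_j)$ — cannot be too small, since a small decrement would force $\hat x^j$ to already lie in the $\beta$-neighborhood of $x^{\ast}_{\tau_0}$ by the quadratic-convergence region estimates for self-concordant functions. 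Concretely one shows $\omega(\gamma_j) \ge \omega(0.9\beta)$ whenever the stopping criterion is not yet met. (iii) Combine (i) and (ii): the objective drops by at least $\omega(0.9\beta)$ each iteration until termination, so the number of iterations is at most $(F_{\tau_0}(\hat x^0) - F_{\tau_0}(x^{\ast}_{\tau_0}))/\omega(0.9\beta)$; taking the floor gives the stated bound. (iv) Finally, once the decrement is small, invoke the standard fact that the proximal-Newton decrement controls the local norm distance to the optimum to conclude $\norms{\hat x^{j_{\max}} - x^{\ast}_{\tau_0}}_{x^{\ast}_{\tau_0}} \le \beta$.

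I expect the main obstacle to be step (ii): carefully quantifying the relationship between the approximate decrement $\hat\zeta_j$ (contaminated by the inexactness $\hat\delta_j$) and the true distance $\norms{\hat x^j - x^{\ast}_{\tau_0}}_{x^{\ast}_{\tau_0}}$, so that the threshold $\beta = 0.05$ together with the $10\%$ inexactness rule cleanly yields the constant $\omega(0.9\beta)$ in the denominator. This requires the Dikin-ellipsoid / self-concordant regularity estimates and matching the constants precisely. Since this is exactly \cite[Lemma 4.3]{TranDinh2013e}, the cleanest route is to verify that our $F_{\tau_0}$ satisfies the hypotheses there — standard self-concordance of $f$, properness/closedness/convexity of $g$, and the inexact-proximal model of Definition~\ref{de:approx_prox} — and then quote that result directly rather than reprove the constant-chasing. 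I would therefore structure the proof as: (a) reduce to the setting of \cite{TranDinh2013e} by identifying $F_{\tau_0}$, $f_{\tau_0}$, $g/\tau_0$, and noting $\nabla^2 f_{\tau_0} = \nabla^2 f$; (b) apply \cite[Lemma 4.3]{TranDinh2013e} verbatim to obtain the iteration count and the terminal estimate $\norms{\hat x^{j_{\max}} - x^{\ast}_{\tau_0}}_{x^{\ast}_{\tau_0}} \le \beta$.
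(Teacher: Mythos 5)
Your proposal is correct and matches the paper exactly: the paper itself does not reprove Proposition~\ref{pro:damped_step_PN} but simply cites \cite[Lemma 4.3]{TranDinh2013e}, after noting (as you do in step (a)) that $F_{\tau_0}$ fits the template there because $\nabla^2 f_{\tau_0} = \nabla^2 f$ and the linear term does not affect self-concordance. Your preliminary sketch of the damped-step descent and lower-bound-on-decrement argument is a faithful outline of what that cited lemma contains, but the final route you settle on — invoke the lemma verbatim — is precisely what the paper does.
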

%% End of Proposition 7.
Proposition~\ref{pro:damped_step_PN} suggests that we can perform a finite number of damped-step {proximal-Newton} scheme \eqref{eq:damped_step_PN} to find $x^0 := \hat{x}^{j_{\max}}$ such that $\norms{x^0 - x^{\ast}_{\tau_0}}_{x^{\ast}_{\tau_0}} \leq \beta$. Hence, $x^0$ is an initial point that satisfies the conditions of Theorem~\ref{th:linear_convergence2b} and Theorem~\ref{th:linear_convergence2}.

\subsection{Strong convexity of $g$}
If $g$ is strongly convex with a strong convexity parameter $\mu_g > 0$, and $x^0$ is not an optimal solution of \eqref{eq:composite_cvx}, then we can choose $\tau_0$ as
\myeq{eq:choice_of_tau0}{
0 < \tau_0 \leq \frac{\beta\mu_g}{(1+\beta)\lambda_{\max}(\nabla^2{f}(x^0))^{1/2}\norms{\nabla{f}(x^0) + \xi^0}_2}.
}
Here, $\lambda_{\max}(\nabla^2{f}(x^0))$ is the maximum eigenvalue of $\nabla^2{f}(x^0)$.
We will show in Appendix \ref{apdx:eq:choice_of_tau0} that $x^0$ satisfies $\norms{x^0 - x^{\ast}_{\tau_0}}_{x^{\ast}_{\tau_0}} \leq \beta$.
Hence, Algorithm~\ref{alg:A1} can start from an arbitrary point $x^0\in\dom{F}$.

%%%%%%%%%%%%%%%%%
\subsection{Non-strong convexity of $g$ and strong convexity of $f$}
We adopt our recent idea in \cite{TranDinh2016c} to develop a homotopy scheme to find this initial point {$x^0$} in a finite number of iterations. 

Starting from any $\hat{x}^0\in\dom{F}$, we consider the following auxiliary optimality condition depending on a new homotopy parameter $t > 0$ and a fixed value $\tau_0 \in (0, 1)$:
\myeq{eq:opt_cond_auxi}{
0 \in \nabla{f}(x^{\ast}_t) - \big(\tfrac{1}{\tau_0} -1\big)\hat{\xi}^0 - t(\nabla{f}(\hat{x}^0) + \hat{\xi}^0) + \tfrac{1}{\tau_0}\partial{g}(x^{\ast}_t),
}
for any $\hat{\xi}^0\in\partial{g}(\hat{x}^0)$.
Clearly, when $t = 0$, \eqref{eq:opt_cond_auxi} reduces to \eqref{eq:new_reparam_opt_cond} at $\tau = \tau_0$ and $x^0 \equiv \hat{x}^0$.
When $t = 1$, \eqref{eq:opt_cond_auxi} becomes $0 \in \nabla{f}(x^{\ast}_0) - \nabla{f}(\hat{x}^0) + \frac{1}{\tau_0}(\partial{g}(x^{\ast}_0) - \hat{\xi}^0)$, which shows that ${x^*_0 = } \hat{x}^0$ is a solution of \eqref{eq:opt_cond_auxi}.
By applying the homotopy method starting from $t_0 \approx 1$, and decreases $t_j$ to zero, we obtain an approximation $\hat{x}^j$ to $x^{\ast}_{\tau_0}$.
The main step of this scheme is given as follows:
\myeq{eq:homotopy_scheme0}{
\hat{x}^{j+1} :\approx \prox_{\frac{1}{\tau_0}g}^{\nabla^2{f}(\hat{x}^j)}\Big(\hat{x}^j - \nabla^2{f}(\hat{x}^j)^{-1}\big(\nabla{f}(\hat{x}^j) - (\tfrac{1}{\tau_0} - 1)\hat{\xi}^0 -  t_{j+1}(\nabla{f}(\hat{x}^0) + \hat{\xi}^0)\big)\Big),
}
where the approximation ``$:\approx$'' is defined as in Definition~\ref{de:approx_prox}, and $t_0 > 0$ is a starting value of $t$.
We also use the {``hat'' notation  for the iterates} to distinguish this procedure from Algorithm~\ref{alg:A1}.

This scheme is slightly different from \eqref{eq:homotopy_method2} {with}
the additional term $-t_{j+1}(\nabla{f}(\hat{x}^0) + \hat{\xi}^0)$.
The following theorem shows us how to choose $t_0$ and update $t$ to guarantee $\norms{\hat{x}^j - x^{\ast}_{\tau_0}}_{x^{\ast}_{\tau_0}} \leq \beta$, whose proof is given in Appendix~\ref{apdx:th:initial_point}.

%%% Theorem 7.
\begin{theorem}\label{th:initial_point}
Assume that $f$ is self-concordant and  $\mu_f$-strongly convex with $\mu_f > 0$.
For any given $\beta \in (0, 0.05]$, we defined $\Theta := \sqrt{\frac{99\beta}{500}} -\frac{10\beta}{9}$ {$> 0.$}
Let $\hat{x}^0\in\dom{F}$ be an arbitrary starting point, $\hat{\xi}^0 \in\partial{g}(\hat{x}^0)$, and $t_0$ be chosen such that 
\myeq{eq:t0_choice}{
t_0 := \begin{cases}
1 - \frac{\beta}{(1+2\beta)\norms{\nabla{f}(\hat{x}^0) + \hat{\xi}^0}_{\hat{x}^0}^{\ast}} &\text{if}~\norms{\nabla{f}(\hat{x}^0) + \hat{\xi}^0}_{\hat{x}^0}^{\ast} > \frac{1+2\beta}{\beta},\\
1 & \text{otherwise}.
\end{cases}
}
Let $\set{(\hat{x}^j, t_j)}$ be the sequence generated by \eqref{eq:homotopy_scheme0} starting from this $\hat{x}^0$ and $t_0$.
Support further that $t_j$ is updated {by} $t_{j+1} := \big[ t_j  - \frac{\Theta}{L_g(1+\Theta)}\big]_{+}$ and $\delta_j$  {satisfies} $\hat{\delta}_j \leq \frac{\lambda_j}{113}$. 
Then, after at most $j_{\max} := \left\lfloor \frac{t_0M_0(1 + \Theta)}{\Theta}\right\rfloor$ iterations with $M_0 := \frac{\norms{\nabla{f}(\hat{x}^0) + \hat{\xi}^0}_2}{\sqrt{\mu_f}}$, we have $t_{j_{\max}} = 0$, and $\norms{\hat{x}^{j_{\max}} - x^{\ast}_{\tau_0}}_{x^{\ast}_{\tau_0}} \leq \beta$.
\end{theorem}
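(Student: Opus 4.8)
The plan is to run the auxiliary homotopy \eqref{eq:opt_cond_auxi} on the parameter $t$ in \emph{reverse}: we start at $t_0\le 1$, where $\hat{x}^0$ solves \eqref{eq:opt_cond_auxi} exactly (the observation right after \eqref{eq:opt_cond_auxi}), and decrease $t$ by equal decrements down to $0$, where \eqref{eq:opt_cond_auxi} coincides with \eqref{eq:new_reparam_opt_cond} at $\tau = \tau_0$, so that its solution is $x^{\ast}_{\tau_0}$. Consequently, if we keep $\lambda_j := \norms{\hat{x}^j - x^{\ast}_{t_j}}_{x^{\ast}_{t_j}}$ small along the way, the last iterate $\hat{x}^{j_{\max}}$ is automatically a valid starting point for Theorem~\ref{th:linear_convergence2b}. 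The argument is a mirror image of the proof of Theorem~\ref{th:linear_convergence2b}: the fixed vector $v := \nabla{f}(\hat{x}^0) + \hat{\xi}^0$ and the term $-t\,v$ in \eqref{eq:opt_cond_auxi} play the role of the weighted-subgradient term there, and \eqref{eq:homotopy_scheme0} is the same inexact scaled proximal-Newton step as \eqref{eq:homotopy_method2}. I would prove by induction on $j$ that $\lambda_j \le \beta$.

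\emph{Base case.} If $\norms{v}_{\hat{x}^0}^{\ast} \le \frac{1+2\beta}{\beta}$, then \eqref{eq:t0_choice} sets $t_0 = 1$ and $x^{\ast}_{t_0} = \hat{x}^0$, hence $\lambda_0 = 0$. Otherwise $1 - t_0 = \frac{\beta}{(1+2\beta)\norms{v}_{\hat{x}^0}^{\ast}}$, and since the optimality conditions \eqref{eq:opt_cond_auxi} at $t = 1$ and at $t = t_0$ differ only by the constant vector $(1-t_0)\,v$, the monotonicity of $\partial{g}$ together with the ($\mu_f$-strong-convexity and self-concordant) monotonicity of $\nabla{f}$ yields a bound of the form $\norms{x^{\ast}_1 - x^{\ast}_{t_0}}_{x^{\ast}_{t_0}} \le \frac{(1-t_0)\norms{v}_{\hat{x}^0}^{\ast}}{1 - (1-t_0)\norms{v}_{\hat{x}^0}^{\ast}}$ after converting between the local norms at $\hat{x}^0$ and at $x^{\ast}_{t_0}$ by the Dikin-ellipsoid inequality for self-concordant functions; the value of $t_0$ in \eqref{eq:t0_choice} is chosen exactly so that this is $\le \beta$.

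\emph{Inductive step.} Suppose $\lambda_j \le \beta$. First, replacing $t_j$ by $t_{j+1}$ shifts the target from $x^{\ast}_{t_j}$ to $x^{\ast}_{t_{j+1}}$; since these solve \eqref{eq:opt_cond_auxi} for parameters that differ by the fixed decrement in the update rule, the same monotonicity argument bounds the displacement $\vartheta_j := \norms{x^{\ast}_{t_j} - x^{\ast}_{t_{j+1}}}_{x^{\ast}_{t_{j+1}}}$ by an explicit multiple of that decrement — the analogue of the $\hat{\lambda}_k$-versus-$\lambda_k$ estimate in the proof of Theorem~\ref{th:linear_convergence2b}. Combining with $\lambda_j \le \beta$ and one more self-concordant norm conversion gives $\hat{\lambda}_j := \norms{\hat{x}^j - x^{\ast}_{t_{j+1}}}_{x^{\ast}_{t_{j+1}}} \le \frac{\beta}{1-\vartheta_j} + \vartheta_j$, which stays comfortably below the quadratic-convergence threshold. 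Next, the inexact scaled proximal-Newton step \eqref{eq:homotopy_scheme0} at parameter $t_{j+1}$ obeys the quadratic-type contraction $\lambda_{j+1} \le \frac{\hat{\lambda}_j^2}{(1-\hat{\lambda}_j)^2} + \frac{\hat{\delta}_j}{1-\hat{\lambda}_j}$, the inexact proximal-Newton estimate in the self-concordant setting (cf.\ \cite{TranDinh2013e}), where $\hat{\delta}_j \le \lambda_j/113 \le \beta/113$ by Fact~\ref{fact:fact_k}. Plugging in the bound for $\hat{\lambda}_j$ and using $\Theta := \sqrt{\tfrac{99\beta}{500}} - \tfrac{10\beta}{9}$ and $\beta \le 0.05$, a direct computation gives $\lambda_{j+1} \le \beta$; this scalar inequality is precisely what the values of $\Theta$, $\tfrac{1}{113}$, and $\beta = 0.05$ were tuned to make hold, and closes the induction.

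\emph{Termination and the hard part.} Each iteration decreases $t$ by the fixed positive amount in the update rule, so after finitely many iterations the clamp $[\,\cdot\,]_{+}$ forces $t_{j_{\max}} = 0$; because $\mu_f$-strong convexity gives $\norms{v}_x^{\ast} \le M_0 = \norms{v}_2/\sqrt{\mu_f}$ for all $x$, at most $\big\lfloor t_0 M_0(1+\Theta)/\Theta\big\rfloor$ decrements are needed. At that index $x^{\ast}_{t_{j_{\max}}} = x^{\ast}_{\tau_0}$, and the invariant gives $\norms{\hat{x}^{j_{\max}} - x^{\ast}_{\tau_0}}_{x^{\ast}_{\tau_0}} = \lambda_{j_{\max}} \le \beta$, which is the claim. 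The main obstacle is the inductive step: one has to track simultaneously the displacement of the moving target $x^{\ast}_t$ caused by the decrement of $t$ and the residual of a single inexact proximal-Newton step, shuttling between the local norms at $\hat{x}^j$, $x^{\ast}_{t_j}$, and $x^{\ast}_{t_{j+1}}$ via self-concordance, and then verify that the resulting recursion closes \emph{with the explicit constant $\beta$}. All the real work lies in that bookkeeping and the final scalar inequality pinning down $\Theta$; the remainder parallels Theorems~\ref{th:linear_convergence2b}--\ref{th:linear_convergence2}.
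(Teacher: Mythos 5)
Your proposal follows essentially the same route as the paper: induction keeping $\lambda_j := \norms{\hat{x}^j - x^{\ast}_{t_j}}_{x^{\ast}_{t_j}}\le\beta$, with the displacement bound from Lemma~\ref{le:auxi_results}(c)/(d) controlling the moving target, the scalar inequality $\frac{\beta}{100}+5\big(\tfrac{10\beta}{9}+\Delta_j^{\ast}\big)^2\le\beta$ determining $\Theta$, the $t_0$ formula coming from the two-step local-norm conversion for the base case, and the $M_0$ Lipschitz bound counting the equal decrements to reach $t_{j_{\max}}=0$. The only noteworthy deviation is a minor one: you quote the simpler contraction $\lambda_{j+1}\le \hat{\lambda}_j^2/(1-\hat{\lambda}_j)^2 + \hat{\delta}_j/(1-\hat{\lambda}_j)$ instead of the paper's Lemma~\ref{le:contraction_of_prox_nt} coefficient $\frac{3-2\hat{\lambda}_j}{1-4\hat{\lambda}_j+2\hat{\lambda}_j^2}$ (which the paper pessimistically bounds by $5$), but this does not change the structure or the numerology.
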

%% End of Theorem 7.
Theorem~\ref{th:initial_point} shows that to find an initial point $x^0 := \hat{x}^{j_{\max}}$ for Algorithm~\ref{alg:A1} such that $\norms{x^0 - x^{\ast}_{\tau_0}}_{x^{\ast}_{\tau_0}} \leq \beta$, we only need a finite number of iterations $j_{\max}$ as defined in Theorem~\ref{th:initial_point}. Moreover, in this case, we can take $\xi^0 := \hat{\xi}^0$ in Algorithm~\ref{alg:A1}.

%%%% 5.4. Implementation remarks of Algorithm 1.
\subsection{Implementation remarks for Algorithm~\ref{alg:A1}}\label{subsec:practical_guide}
Theoretically, the variants of Algorithm~\ref{alg:A1} stated in Theorem~\ref{th:linear_convergence2b} and Theorem~\ref{th:linear_convergence2} require a good starting point $x^0$ such that $\Vert x^0 - x^{\ast}_{\tau_0}\Vert_{x^{\ast}_{\tau_0}} \leq \beta$.
To find this point, we can use either \eqref{eq:damped_step_PN} or \eqref{eq:homotopy_scheme0}.
However, since {we know that when} $\tau_0 = 0$, $x^{\ast}_{\tau_0} \equiv x^{\ast}_0 = x^0$, {in practice we can choose $\tau_0 >0$ to be} sufficiently small such that $x^{\ast}_{\tau_0} \approx x^0$, and skip \textbf{Stage 1}.

Practically, we {only} perform two stages as follows:
\begin{itemize}
\item Skip \textbf{Stage 1} and choose $\tau_0 > 0$ sufficiently small such that $\Vert x^0 - x^{\ast}_{\tau_0}\Vert_{x^{\ast}_{\tau_0}}$ is small.
\item In \textbf{Stage 2}, we {choose} $\sigma = 1$ to guarantee {that} $\norms{x^k - x^{\ast}_{\tau_k}}_{x^{\ast}_{\tau_k}} \leq \beta$ instead of $\norms{x^k - x^{\ast}_{\tau_k}}_{x^{\ast}_{\tau_k}} \leq \beta\sigma^k$.
Then we update $\tau_k$ from $\tau_0$ to $\tau_k\approx 1$.
\item In \textbf{Stage 3}, we fix $\tau_k$ and perform a couple of iterations to reach $\norms{x^k - x^{\ast}_{\tau_k}}_{x^{\ast}_{\tau_k}} \leq \varepsilon$.
\end{itemize}
We only perform \textbf{Stage 3} if we choose $\sigma = 1$.
In this case, we only have $\norms{x^k - x^{\ast}_{\tau_k}}_{x^{\ast}_{\tau_k}} \leq \beta$.
To achieve $\norms{x^k - x^{\ast}_{\tau_k}}_{x^{\ast}_{\tau_k}} \leq \varepsilon$, we need to perform a few proximal-Newton iterations with fixed $\tau_k$.

%%%%% 6. Primal-dual-dual method.
\section{Primal-Dual-Primal Method}\label{sec:PDP_alg}
Our second idea is a primal-dual-primal approach to solve  \eqref{eq:composite_cvx}.
We propose a primal-dual-primal method which consists of the following steps:
\begin{itemize}
\item Construct the Fenchel dual problem \eqref{eq:dual_prob} of \eqref{eq:composite_cvx}.
\item Apply Algorithm~\ref{alg:A1} to solve the dual problem \eqref{eq:dual_prob}.
\item Instead of solving the dual subproblem \eqref{eq:cvx_subprob_k}, we dualize it to go back to the primal space.
\item Construct an approximate primal solution of \eqref{eq:composite_cvx} from its dual {approximate solution.}
\end{itemize}
We will show in Section \ref{sec:GL_app} that this approach is useful for the well-known model \eqref{eq:GL_prob}.
Now, we present this method in detail as follows.

%%% 6.1. The dual problem.
\subsection{The dual problem}
We assume that $g(x) := \psi(Dx)$, where $\psi$ is a proper, closed, and convex function from $\R^n\to\Rext$, and $D : \R^p\to\R^n$ is a linear operator such that $n\leq p$.
The dual problem of \eqref{eq:composite_cvx} in this case becomes
\myeq{eq:dual_prob}{
\Psi^{\star} := \min_{y\in\R^n}\Big\{ \Psi(y) := f^{\ast}(-D^{\top}y) + \psi^{\ast}(y) \Big\},
}
where $f^{\ast}$ and $\psi^{\ast}$ are the Fenchel conjugates of $f$ and $\psi$, respectively.

Let us define $\varphi(y) := f^{\ast}(-D^{\top}y)$.
Then, we can compute the gradient and Hessian of $\varphi$ as 
\myeq{eq:grad_hess_of_varphi}{
\nabla{\varphi}(y) = -D\nabla{f}^{\ast}(-D^{\top}y),~~~\text{and}~~\nabla^2{\varphi}(y) = D\nabla^2{f^{\ast}}(-D^{\top}y)D^{\top}.
}
We impose the following assumption.

%%% Assumption 4.
\begin{assumption}\label{as:A4b}
The function $f$ in \eqref{eq:composite_cvx} is self-concordant as defined in Definition~\ref{de:gen_sel_con_def}, and $g(x) := \psi(Dx)$, where $\psi : \R^n\to\Rext$ is a proper, closed, and convex function and $D : \R^p\to\R^n$ is a linear operator such that $n\leq p$.
In addition, $D$ {has} full-row rank.
\end{assumption}

Under Assumption~\ref{as:A4b}, the function $\varphi$ is still a self-concordant function as stated in \cite[Theorem 2.4.1]{Nesterov1994} for $\dom{\varphi}$ defined as
\myeqn{
\dom{\varphi} = \set{ y\in\R^n \mid {-D^{\top}y } \in \dom{f^{\ast}}}.
}
We define the local norm with respect to $\varphi$ as $\norms{u}_y := (u^{\top}\nabla^2{\varphi}(y)u)^{1/2}$ and its dual norm $\norms{v}_y^{\ast} := (v^{\top}\nabla^2{\varphi}(y)^{-1}v)^{1/2}$.
The optimality condition of the dual problem \eqref{eq:dual_prob} becomes
\myeq{eq:opt_dual_prob}{
0 \in \nabla{\varphi}(y^{\star}) + \partial{\psi^{\ast}}(y^{\star}) \equiv -D\nabla{f}^{\ast}(-D^{\top}y^{\star}) + \partial{\psi^{\ast}}(y^{\star}),
}
which is necessary and sufficient for $y^{\star}$ to be an optimal solution of \eqref{eq:dual_prob} if $\dom{\varphi}\cap{\dom{\psi^*}}\neq\emptyset$.

Let $y^{\star}$ be an optimal solution of \eqref{eq:dual_prob}. Then, from  \eqref{eq:opt_dual_prob}, if we define  
\myeq{eq:sol_of_com_cvx}{
x^{\star} := \nabla{f}^{\ast}(-D^{\top}y^{\star}),
}
then $-D^{\top}y^{\star} \in \partial{f}(x^{\star})$, which leads to $0 \in D^{\top}y^{\star} +  \partial{f}(x^{\star})$.
On the other hand, we have $Dx^{\star} \in  \partial{\psi^{\ast}}(y^{\star})$, which leads to $y^{\star} \in \partial{\psi}(Dx^{\star})$.
Combining both expressions, we have $0 \in D^{\top}\partial{\psi}(Dx^{\star}) + \partial{f}(x^{\star})$.
Therefore, $x^{\star}$ given by \eqref{eq:sol_of_com_cvx} is an exact solution of the primal problem \eqref{eq:composite_cvx}.

%%% 6.2. The homotopy proximal-method methods for the dual problem.
\subsection{The homotopy proximal Newton method methods for the dual problem}
To fulfill the assumptions of Theorems \ref{th:linear_convergence2b} and \ref{th:linear_convergence2}, we assume that one of the following conditions holds:
\begin{itemize}
\item $f$ satisfied Assumption \ref{as:A4b} and $\psi^{\ast}$ is $L_{\psi^{\ast}}$-Lipschitz continuous w.r.t. $\norms{\cdot}_y$ defined by $\varphi$.
\item $f$ satisfied Assumption \ref{as:A4b} and is $\nu_f$-self-concordant barrier.
\end{itemize}
One can show that $\psi^{\ast}$ is $L_{\psi^{\ast}}$-Lipschitz continuous if $\dom{\partial{\psi}}$ is bounded w.r.t. the local norm defined by $f$, i.e. there exists $L_{\psi^{\ast}} > 0$ such that $\norms{u}_x \leq  L_{\psi^{\ast}}$ for any $u\in\dom{\partial{\psi}}$.
Since the dual problem \eqref{eq:dual_prob} has the same property as the primal one \eqref{eq:composite_cvx} under the above assumptions, let us apply Algorithm~\ref{alg:A1} with $H_k := \nabla^2{\varphi}(y^k)$ to solve this problem, which leads to
\myeq{eq:homotopy_method4}{
y^{k+1} :\approx \prox_{\frac{1}{\tau_{k+1}}\psi^{\ast}}^{\nabla^2{\varphi}(y^k)}\left(y^k - \nabla^2{\varphi}(y^k)^{-1}\nabla{\varphi_{\tau_{k+1}}}(y^k)\right),
}
where $\nabla{\varphi_{\tau_{k+1}}}(y^k) := \nabla{\varphi}(y^k) - \big(\frac{1}{\tau_{k+1}} - 1\big)\xi^0$ with $\xi^0 \in \partial{\psi^{\ast}}(y^0)$.
Here, $y^{k+1}$ is an approximation to the true solution $\bar{y}^{k+1}$ as defined in Definition~\ref{de:approx_prox}, where  $\bar{y}^{k+1}$ is {given by}
\myeq{eq:cvx_subprob3}{
{\!\!}\bar{y}^{k+1} {\!\!}:= \argmin_{y\in\R^n}\set{ \Pc_k(y) := \iprods{\nabla{\varphi_{\tau_{k\!+\!1}}}(y^k), y - y^k} + \tfrac{1}{2}\iprods{\nabla^2{\varphi}(y^k)(y - y^k), y - y^k} + \tfrac{1}{\tau_{k+1}}\psi^{\ast}(y) },{\!\!\!\!}
}
{and} both the gradient mapping $\nabla{\varphi}$ and the Hessian mapping $\nabla^2{\varphi}$ of $\varphi$ are given in \eqref{eq:grad_hess_of_varphi}, respectively.
This problem in general does not have a closed form solution.
{But observe that \eqref{eq:cvx_subprob3} is a convex composite quadratic
programming problem for which highly advanced algorithms such as the
semismooth Newton augmented Lagrangian method developed in \cite{li2018highly,zhao2010newton} can be
designed to solve it efficiently, as we shall demonstrate later 
in the numerical experiments.}

%%% 3.5. Solution of the subproblem.
\subsection{The dualization of the subproblem \eqref{eq:cvx_subprob3}}\label{subsec:dual_approach_for_cvx_subprob3}
Instead of solving the dual subproblem \eqref{eq:cvx_subprob3} directly, we dualize it to obtain the following subproblem in the primal space of $Dx$:
\myeq{eq:dual_subprob1c}{
{z^{k+1}  \approx \bar{z}^{k+1}}  := \argmin_{z\in\R^n}\Big\{ \Qc_k(z; y^k) := \tfrac{1}{2}\iprods{H(y^k)z, z} - \iprods{h_{\tau_{k+1}}(y^k), z} + \tfrac{1}{\tau_{k+1}}\psi(\tau_{k+1}z) \Big\},
}
where 
\myeqn{
\begin{array}{ll}
H(y^k) &:= \nabla^2{\varphi}(y^k)^{-1} =  (D\nabla^2{f^{\ast}}(-D^{\top}y^k)D^{\top})^{-1}, ~\text{and} \vspace{1ex}\\
h_{\tau_{k+1}}(y^k) &:= y^k - \nabla^2{\varphi}(y^k)^{-1}\nabla{\varphi_{\tau_{k+1}}}(y^k) = y^k - (D\nabla^2{f^{\ast}}(-D^{\top}y^k)D^{\top})^{-1}\nabla{\varphi_{\tau_{k+1}}}(y^k).
\end{array}
}
Clearly, this problem is again a composite strongly convex quadratic program of the same form as \eqref{eq:cvx_subprob3}, but in the primal space of $Dx$.
Specially, if $D =\Id$, the identity matrix, then \eqref{eq:dual_subprob1c} is in the primal space of $x$ as in \eqref{eq:cvx_subprob_k}.

%%% 3.6. Solution recovery
\subsection{Solution reconstruction for \eqref{eq:cvx_subprob3}}\label{subsec:solution_recovery}
{Recall that $\bar{z}^{k+1}$ denotes the} exact solution of \eqref{eq:dual_subprob1c}, then we can construct 
\myeq{eq:primal_sol_recovery}{
\bar{y}^{k+1} := y^k- \nabla^2{\varphi}(y^k)^{-1}\left(\nabla{\varphi_{\tau_{k+1}}}(y^k) + \bar{z}^{k+1}\right),  
}
as an exact solution of \eqref{eq:cvx_subprob3}.

Assume that we can only solve \eqref{eq:dual_subprob1c} up to a given accuracy $\delta \geq 0$.
In this case, we say that $z^{k+1}$ is a $\delta$-approximate solution to $\bar{z}^{k+1}$ of \eqref{eq:dual_subprob1c} if for any $\tilde{e}_k$ such that $\norm{\tilde{e}_k}_{y^k} \leq \delta$, we have
\myeq{eq:approx_sol3}{
\tilde{e}_k \in H(y^k)z^{k+1} - h_{\tau_{k+1}}(y^k) + \partial{\psi}(\tau_{k+1} z^{k+1}).
}
To guarantee \eqref{eq:approx_sol3}, we can apply inexact first-order methods to solve \eqref{eq:dual_subprob1c}, see, e.g., in \cite{Schmidt2011,villa2013accelerated}.

If $z^{k+1}$ satisfies \eqref{eq:approx_sol3}, then we can construct an approximate solution $y^{k+1}$ to $\bar{y}^{k+1}$ as
\myeq{eq:primal_sol_recovery_inexact}{
y^{k+1} := y^k - \nabla^2{\varphi}(y^k)^{-1}\left(\nabla{\varphi_{\tau_{k+1}}}(y^k) + z^{k+1}\right)  + \tilde{e}_k.
}
The following lemma shows a relation between $z^{k+1}$ of \eqref{eq:dual_subprob1c} and the approximate solution $y^{k+1}$ of \eqref{eq:cvx_subprob3}, whose proof is given in Appendix \ref{apdx:le:approx_pd_sol}.

%% Lemma 3.1.
\begin{lemma}\label{le:approx_pd_sol}
Let $z^{k+1}$ be a $\delta$-approximate solution to $\bar{z}^{k+1}$ of \eqref{eq:dual_subprob1c} in the sense of \eqref{eq:approx_sol3}. 
Then, $y^{k+1}$ constructed by \eqref{eq:primal_sol_recovery_inexact} is also a $\delta$-approximate solution to the true solution $\bar{y}^{k+1}$ of \eqref{eq:cvx_subprob3} such that $\Pc_k(y^{k+1}) - \Pc_k(\bar{y}^{k+1}) \leq \frac{\delta^2}{2}$.
\end{lemma}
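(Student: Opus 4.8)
The plan is to establish the duality relationship directly from the KKT/optimality conditions of the two subproblems \eqref{eq:cvx_subprob3} and \eqref{eq:dual_subprob1c}, and then translate the inexactness condition \eqref{eq:approx_sol3} into the descent bound $\Pc_k(y^{k+1}) - \Pc_k(\bar{y}^{k+1}) \leq \frac{\delta^2}{2}$ via Fact~\ref{fact:fact_k}. First I would write down the optimality condition for the convex quadratic composite problem \eqref{eq:cvx_subprob3}: a point $\bar{y}^{k+1}$ is its exact solution if and only if $0 \in \nabla{\varphi_{\tau_{k+1}}}(y^k) + \nabla^2{\varphi}(y^k)(\bar{y}^{k+1} - y^k) + \frac{1}{\tau_{k+1}}\partial\psi^{\ast}(\bar{y}^{k+1})$. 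Using the identity $\partial\psi^{\ast} = (\partial\psi)^{-1}$, this is equivalent to the existence of $\bar{z}^{k+1}$ with $\bar{z}^{k+1} \in \partial\psi(\tau_{k+1}\bar{z}^{k+1})$-type duality; more precisely, setting $\bar{z}^{k+1} := -\nabla^2{\varphi}(y^k)\big(\bar{y}^{k+1} - y^k\big) - \nabla{\varphi_{\tau_{k+1}}}(y^k)$ one checks that $\bar{y}^{k+1} \in \frac{1}{\tau_{k+1}}\partial\psi^{\ast}(\bar{y}^{k+1})$-condition becomes exactly the first-order condition of \eqref{eq:dual_subprob1c}. This shows the stated reconstruction formula \eqref{eq:primal_sol_recovery} is the correct primal-from-dual map.

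Next I would handle the inexact case. Rearranging \eqref{eq:primal_sol_recovery_inexact} gives
\[
\nabla^2{\varphi}(y^k)\big(y^{k+1} - \tilde{e}_k - y^k\big) + \nabla{\varphi_{\tau_{k+1}}}(y^k) = -z^{k+1}.
\]
Plugging this into \eqref{eq:approx_sol3} and using that $H(y^k) = \nabla^2{\varphi}(y^k)^{-1}$ and $h_{\tau_{k+1}}(y^k) = y^k - \nabla^2{\varphi}(y^k)^{-1}\nabla{\varphi_{\tau_{k+1}}}(y^k)$, I would show that \eqref{eq:approx_sol3} is equivalent to
\[
0 \in \nabla{\varphi_{\tau_{k+1}}}(y^k) + \nabla^2{\varphi}(y^k)\big(y^{k+1} - y^k\big) + \tfrac{1}{\tau_{k+1}}\partial\psi^{\ast}(y^{k+1}),
\]
\emph{after absorbing the error $\tilde{e}_k$}. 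That is, $y^{k+1}$ is the \emph{exact} minimizer of the perturbed subproblem $\Pc_k(y) - \iprods{\nabla^2{\varphi}(y^k)\tilde{e}_k,\, y}$, or equivalently $y^{k+1}$ satisfies $0 \in \partial\Pc_k(y^{k+1}) - r_k$ with residual $r_k := \nabla^2{\varphi}(y^k)\tilde{e}_k$. From strong convexity of $\Pc_k$ with modulus given by $\nabla^2{\varphi}(y^k)$ (in the norm $\norms{\cdot}_{y^k}$), one obtains
\[
\Pc_k(y^{k+1}) - \Pc_k(\bar{y}^{k+1}) \leq \tfrac{1}{2}\,(\norms{r_k}_{y^k}^{\ast})^2 = \tfrac{1}{2}\norms{\tilde{e}_k}_{y^k}^2 \leq \tfrac{\delta^2}{2},
\]
where the middle equality uses $\norms{\nabla^2{\varphi}(y^k)\tilde{e}_k}_{y^k}^{\ast} = \norms{\tilde{e}_k}_{y^k}$, exactly the bound claimed in the lemma; this also matches the $\delta_k$-approximation notion of Definition~\ref{de:approx_prox}.

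The main obstacle I anticipate is the bookkeeping in the algebraic identification of \eqref{eq:approx_sol3} with the perturbed optimality condition — keeping straight which variable lives in the dual space of $y$ versus the primal space of $Dx$, and verifying that the linear change of variables $z \leftrightarrow \tau_{k+1} z$ inside $\psi$ is compatible with the conjugacy relation $\partial\psi^{\ast} = (\partial\psi)^{-1}$. One has to be careful that the factor $\frac{1}{\tau_{k+1}}$ and the scaling $\tau_{k+1}z$ in $\Qc_k$ are exactly the ones that make the Fenchel--Young inequality tight at the solution pair. Once that correspondence is nailed down, the descent estimate is a routine consequence of strong convexity, so the conjugacy/scaling verification is where the real work lies.
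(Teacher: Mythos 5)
Your proposal is correct and follows essentially the same route as the paper's own proof: both extract the inclusion $z^{k+1}\in\frac{1}{\tau_{k+1}}\partial\psi^{\ast}(y^{k+1})$ from \eqref{eq:approx_sol3} together with \eqref{eq:primal_sol_recovery_inexact}, then use that the residual $\nabla^2\varphi(y^k)\tilde{e}_k$ lies in $\partial\Pc_k(y^{k+1})$ and that $\Pc_k$ is strongly convex in the $\norms{\cdot}_{y^k}$ metric to obtain $\Pc_k(y^{k+1})-\Pc_k(\bar{y}^{k+1})\le\tfrac{1}{2}\norms{\tilde{e}_k}_{y^k}^2\le\tfrac{\delta^2}{2}$. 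The paper does this by an explicit expansion of the quadratic part $q_k$ and the convexity inequality for $\psi^{\ast}$, while you package the same arithmetic as a single perturbed-optimality-plus-strong-convexity step; the content is identical.
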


\subsection{Primal solution recovery}\label{subsec:primal_solution_reconstruction}
Finally, we show how to recover an approximate primal solution $x^k$ of the original problem \eqref{eq:composite_cvx} from its dual approximate solution $y^k$.
Based on \eqref{eq:sol_of_com_cvx}, we show below that for an approximate solution $y^k$ to $y^{\star}$, the following point
\myeq{eq:app_sol_of_com_cvx}{
x^k = \nabla{f}^{\ast}(-D^{\top}y^k)
}
is an {approximate} solution to the true solution $x^{\star}$ of \eqref{eq:composite_cvx} as stated in the following theorem whose proof is given in Appendix \ref{apdx:le:xk_app_sol}.
In particular, if { $D$ is} an invertible matrix, then one can show that we can construct an approximate solution $x^{k+1}$  to $x^{\star}$ of \eqref{eq:composite_cvx} from $z^{k+1}$.

%%% Theorem 6.
\begin{theorem}\label{le:xk_app_sol}
Let $y^{\star}$ be an exact solution of  the dual problem \eqref{eq:dual_prob}. Then
\begin{itemize}
\item[$(\mathrm{a})$] 
$x^{\star}$ constructed by \eqref{eq:sol_of_com_cvx} is an exact solution of  \eqref{eq:composite_cvx}.

\item[$(\mathrm{b})$] Let $\set{y^k}$ be computed by \eqref{eq:primal_sol_recovery_inexact} and $\set{x^k}$ be given by \eqref{eq:app_sol_of_com_cvx} such that $\Vert y^k - y^{\star}\Vert_{y^{\star}} \!<\! 1$. Then
\vspace{-2ex}
\myeq{eq:xk_app_sol}{
\Vert x^k - x^{\star}\Vert_{x^{\star}} := \iprods{\nabla^2{f^{\ast}}(-D^{\top}y^{\star})^{-1}(x^k - x^{\star}), (x^k - x^{\star})}^{1/2} \leq \frac{\Vert y^k - y^{\star}\Vert_{y^{\star}}}{1 - \Vert y^k - y^{\star}\Vert_{y^{\star}}}.
}
Consequently, under the conditions of Theorem \ref{th:linear_convergence2b} or Theorem \ref{th:linear_convergence2}, the sequence $\set{x^k}$ converges linearly to the optimal solution $x^{\star}$ of \eqref{eq:composite_cvx}.

\item[$(\mathrm{c})$]
Let $z^{k+1}$ be an approximate solution of \eqref{eq:dual_subprob1c}. 
If $\Vert y^k - y^{\star}\Vert_{y^k} < 1$, then
\myeq{eq:norm_opt_sol_pd}{
{\!\!\!}\Vert z^{k+ 1} - Dx^{\star}\Vert^{\ast}_{y^k} \leq \tfrac{\Vert y^{\star} - y^k\Vert_{y^k}^2 }{1 - \Vert y^{\star} - y^k\Vert_{y^k}} + \Vert y^{k+1} - y^{\star}\Vert_{y^k} + \big(\tfrac{1}{\tau_{k+1}}-1\big)\Vert\xi^0\Vert^{\ast}_{y^k} + \Vert \tilde{e}_k\Vert_{y^k}.{\!\!\!}
}
Assume that we apply Algorithm~\ref{alg:A1} to solve the dual problem \eqref{eq:dual_prob} under the assumptions of Theorem \ref{th:linear_convergence2b} or Theorem \ref{th:linear_convergence2} and the choice $\delta_k \leq \frac{\lambda_k}{113}$.
If, in addition, $D$ is invertible, then  $x^{k+1} := D^{-1}z^{k+1}$ is an approximate solution to $x^{\star}$ of \eqref{eq:composite_cvx}.
 Moreover, $\big\{ \Vert x^{k+1} - x^{\star}\Vert^{\ast}_{y^k} \big\}$ converges linearly to zero.
\end{itemize}
\end{theorem}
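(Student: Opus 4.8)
The plan is to handle the three parts in sequence, using the fact that $\varphi(y) = f^{\ast}(-D^{\top}y)$ is self-concordant (via \cite[Theorem 2.4.1]{Nesterov1994}) so that the standard self-concordant inequalities relating $\nabla\varphi$, $\nabla^2\varphi$ at nearby points apply, together with the duality identities already derived in the excerpt. Part (a) is essentially already proved in the text: from the dual optimality condition \eqref{eq:opt_dual_prob} one has $-D^{\top}y^{\star}\in\partial f(x^{\star})$ and $Dx^{\star}\in\partial\psi^{\ast}(y^{\star})$, hence $0\in\partial f(x^{\star}) + D^{\top}\partial\psi(Dx^{\star}) = \partial F(x^{\star})$, so I would only need to restate this cleanly.

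For part (b), the key observation is that $x^k = \nabla f^{\ast}(-D^{\top}y^k)$ and $x^{\star} = \nabla f^{\ast}(-D^{\top}y^{\star})$, so $x^k - x^{\star}$ is the difference of gradients of the self-concordant function $f^{\ast}$ evaluated at the points $u^k := -D^{\top}y^k$ and $u^{\star} := -D^{\top}y^{\star}$. First I would express the target norm: since $\nabla^2\varphi(y^{\star}) = D\nabla^2 f^{\ast}(u^{\star})D^{\top}$, one checks that $\Vert x^k - x^{\star}\Vert_{x^{\star}}$ defined via $\nabla^2 f^{\ast}(u^{\star})^{-1}$ in \eqref{eq:xk_app_sol} coincides (or is bounded) by a quantity controlled through $D$; the cleanest route is to use the self-concordance of $f^{\ast}$ directly in the $u$-variable. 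The standard estimate for a self-concordant function $h$ states that $\Vert \nabla h(u) - \nabla h(v) - \nabla^2 h(v)(u-v)\Vert_v^{\ast}$, and more usefully $\Vert \nabla h(u) - \nabla h(v)\Vert$ in an appropriate local norm, is controlled by $\frac{r}{1-r}$ where $r = \Vert u - v\Vert_v$ is the local distance (provided $r < 1$). Applying this with $h = f^{\ast}$, $u = u^k$, $v = u^{\star}$, and noting $\Vert u^k - u^{\star}\Vert_{u^{\star}} = \Vert -D^{\top}(y^k - y^{\star})\Vert_{\nabla^2 f^{\ast}(u^{\star})} = \Vert y^k - y^{\star}\Vert_{\nabla^2\varphi(y^{\star})} = \Vert y^k - y^{\star}\Vert_{y^{\star}}$, yields exactly the bound $\frac{\Vert y^k - y^{\star}\Vert_{y^{\star}}}{1 - \Vert y^k - y^{\star}\Vert_{y^{\star}}}$. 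The linear convergence of $\{x^k\}$ then follows because Theorems~\ref{th:linear_convergence2b}/\ref{th:linear_convergence2} give $\Vert y^k - y^{\star}\Vert_{y^{\star}}\to 0$ linearly, and the function $t\mapsto t/(1-t)$ is continuous and vanishes at $0$.

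For part (c), I would start from the recovery formula \eqref{eq:primal_sol_recovery_inexact}, which gives $\nabla^2\varphi(y^k)(y^{k+1} - y^k) = -(\nabla\varphi_{\tau_{k+1}}(y^k) + z^{k+1}) + \nabla^2\varphi(y^k)\tilde e_k$, hence $z^{k+1} = -\nabla\varphi_{\tau_{k+1}}(y^k) - \nabla^2\varphi(y^k)(y^{k+1} - y^k) + \nabla^2\varphi(y^k)\tilde e_k$. Recalling $\nabla\varphi_{\tau_{k+1}}(y^k) = \nabla\varphi(y^k) - (\frac{1}{\tau_{k+1}}-1)\xi^0$ and $\nabla\varphi(y^k) = -D\nabla f^{\ast}(-D^{\top}y^k) = -Dx^k$, together with the optimality relation $Dx^{\star}\in\partial\psi^{\ast}(y^{\star})$ translated appropriately, I would write $z^{k+1} - Dx^{\star}$ as a sum of four terms and bound each in the $\Vert\cdot\Vert_{y^k}^{\ast}$ norm: (i) $Dx^k - Dx^{\star}$ rewritten as $\nabla\varphi(y^k) - \nabla\varphi(y^{\star})$ and estimated by the self-concordant gradient bound $\frac{\Vert y^{\star}-y^k\Vert_{y^k}^2}{1 - \Vert y^{\star}-y^k\Vert_{y^k}}$ after subtracting the linear term $\nabla^2\varphi(y^k)(y^{\star}-y^k)$ — this is where the square in the numerator comes from; (ii) the term $\nabla^2\varphi(y^k)(y^{k+1} - y^{\star})$ giving $\Vert y^{k+1} - y^{\star}\Vert_{y^k}$; (iii) the term $(\frac{1}{\tau_{k+1}}-1)\xi^0$ giving $(\frac{1}{\tau_{k+1}}-1)\Vert\xi^0\Vert_{y^k}^{\ast}$; and (iv) the error term giving $\Vert\tilde e_k\Vert_{y^k}$. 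Summing by the triangle inequality produces \eqref{eq:norm_opt_sol_pd}. Finally, when $D$ is invertible, $x^{k+1} := D^{-1}z^{k+1}$ approximates $x^{\star} = D^{-1}(Dx^{\star})$, and the four terms on the right of \eqref{eq:norm_opt_sol_pd} each tend to zero linearly under the hypotheses of Theorems~\ref{th:linear_convergence2b}/\ref{th:linear_convergence2} and the choice $\delta_k\le\lambda_k/113$ (the first and second via linear convergence of $\{y^k\}$ and the approximate-solution bound, the third because $1-\tau_k\to 0$ linearly, the fourth because $\delta\to 0$ at least as fast as $\lambda_k$), giving the claimed linear convergence.

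I expect the main obstacle to be part (c): carefully tracking the algebraic identity that expresses $z^{k+1} - Dx^{\star}$ as exactly those four terms (in particular getting the $\nabla^2\varphi(y^k)(y^{\star} - y^k)$ linearization to appear so that the self-concordant estimate yields the \emph{squared} local distance in the numerator), and making sure all local norms are taken at the consistent base point $y^k$ rather than $y^{\star}$, which requires one extra application of the self-concordant norm-equivalence $(1 - \Vert y^k - y^{\star}\Vert_{y^{\star}})\Vert\cdot\Vert_{y^{\star}} \le \Vert\cdot\Vert_{y^k} \le (1-\Vert y^k-y^{\star}\Vert_{y^{\star}})^{-1}\Vert\cdot\Vert_{y^{\star}}$. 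Parts (a) and (b) are comparatively routine given the machinery already set up in the excerpt.
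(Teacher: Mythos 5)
Your proposal is correct and follows essentially the same route as the paper's own proof: part (b) is exactly the mean-value/self-concordance estimate for $\nabla f^{\ast}$ (the paper writes out the bound $\Vert\nabla f^{\ast}(-D^{\top}y^k)-\nabla f^{\ast}(-D^{\top}y^{\star})\Vert$ via the integral Hessian and \cite[Corollary 4.1.4]{Nesterov2004}, which is precisely your ``standard'' bound $r/(1-r)$ with $r=\Vert y^k-y^{\star}\Vert_{y^{\star}}$), and part (c) uses the same four-term decomposition of $z^{k+1}-Dx^{\star}$ with the linearization error bounded by $\Vert y^{\star}-y^k\Vert_{y^k}^2/(1-\Vert y^{\star}-y^k\Vert_{y^k})$ and the remaining terms handled by the triangle inequality and the linear rates from Theorems~\ref{th:linear_convergence2b}/\ref{th:linear_convergence2}. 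The only detail to make explicit in (c) is the uniform bound on $\Vert\xi^0\Vert_{y^k}^{\ast}$ (the paper simply asserts $\Vert\xi^0\Vert_{y^k}^{\ast}\leq M$), which your argument implicitly needs for the third term.
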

%%% End of Lemma

From Theorem~\ref{le:xk_app_sol}, we can see that if $D$ is invertible, {then} we can directly use $x^{k+1} := D^{-1}z^{k+1}$ to approximate the solution $x^{\star}$ of \eqref{eq:composite_cvx}.
Otherwise, we can construct an approximate solution $x^k$ to $x^{\star}$ by using \eqref{eq:app_sol_of_com_cvx}, which requires one evaluation of $\nabla{f^{\ast}}$.

%%%%%%%%%%%%%%%%%%%%%%%%%%%%%%%%%%%
%%% 4. Applications to covariance estimation.
%%%%%%%%%%%%%%%%%%%%%%%%%%%%%%%%%%%
\subsection{Applications to covariance estimation}\label{sec:GL_app}
In this section, we apply Algorithm~\ref{alg:A1} and the primal-dual-primal method in Section \ref{sec:PDP_alg} to solve the regularized covariance estimation problem \eqref{eq:GL_prob} as in \cite{Friedman2008} and its least-squares extension in \cite{Kyrillidis2014}.

%%% 4.1. Regularized inverse covariance estimation
We recall the primal regularized covariance estimation problem given in \eqref{eq:GL_prob}.
Associated with \eqref{eq:GL_prob}, we can also consider its dual form:
\myeq{eq:GL_dual}{
\Psi^{\star} :=  \min_{Y}\Big\{ \Psi(Y) := -\log\det(Y + \Sigma) +  \psi^{\ast}(Y) ~\mid~ Y + \Sigma \succ 0 \Big\}.
}
Here, $\psi^{\ast}$ is the Fenchel conjugate of $\psi(X) := g(X)$. 
This problem again has the same form as \eqref{eq:composite_cvx}.
Instead of solving the primal problem \eqref{eq:GL_prob}, we apply Algorithm~\ref{alg:A1} to solve the dual problem \eqref{eq:GL_dual} and reconstruct a solution of \eqref{eq:GL_prob} from its dual.

%%% b. The main steps of algorithm.
\subsubsection{The main steps of the algorithm}
Given $Y_k$ such that $Y_k + \Sigma \succ 0$, we define $X_k := (Y_k + \Sigma)^{-1}$.
The main step of the algorithm is to solve the following subproblem
\myeq{eq:cvx_subprob2_app}{
{\!\!\!\!}Y_{k+1} \approx \bar{Y}_{k\!+\!1} := \argmin_Y\Big\{ \Pc_k(Y) \!:=\!  -\mathrm{trace}\big(\widehat{X}_k(Y \!-\! Y_k)\big) + \tfrac{1}{2}\trace{X_k(Y \!-\! Y_k)}^2 +  \tfrac{1}{\tau_{k\!+\!1}}\psi^{\ast}(Y) \Big\},{\!\!\!\!\!\!\!\!\!}
}
where $\widehat{X}_k := X_k - \Xi_k \equiv X_k - \big(\tfrac{1}{\tau_{k+1}} - 1\big)\Xi_0$ for a fixed $\Xi_0\in\partial{\psi^{\ast}(Y_0)}$.
As discussed in Section \ref{sec:PDP_alg}, instead of solving \eqref{eq:cvx_subprob2_app}, we look at its dual form
\myeq{eq:dual_subprob2_app}{
Z_{k+1} \approx \bar{Z}_{k+1} := \argmin_{X}\Big\{ \Qc_k(X) := - \mathrm{trace}\big(C_kX\big) + \tfrac{1}{2}\big(\mathrm{trace}\big((Y_k + \Sigma)X\big)^2\big) + \tfrac{1}{\tau_{k+1}}\psi(X) \Big\},
}
where $C_k := 2Y_k - \Xi_k + \Sigma$.
Once $Z_{k+1}$ is computed from \eqref{eq:dual_subprob2_app}, we can reconstruct $Y_{k+1}$ as follows:
\myeq{eq:primal_sol}{
Y_{k+1} := 2Y_k - \Xi_k +  \Sigma - (Y_k + \Sigma)Z_{k+1}(Y_k + \Sigma),
}
and compute an inexact Newton decrement
\myeq{eq:local_distancesb}{
\tilde{\lambda}_k := \big( p - 2\mathrm{trace}(W_k) + \mathrm{trace}(W_k^2) \big)^{1/2},~~\text{where}~~W_k := Z_{k+1}(Y_k + \Sigma).
}
Finally, when an $\varepsilon$-solution $\widetilde{Y}^{\star}$ of \eqref{eq:GL_dual} is computed (i.e. $\widetilde{Y}^{\star} := Y_{k_{\max}}$), we can reconstruct an approximate solution $\widetilde{X}^{\star}$ of the primal problem \eqref{eq:GL_prob} by taking $\widetilde{X}^{\star} := (\Sigma + \widetilde{Y}^{\star})^{-1}$. 
This computation requires the inverse of a symmetric positive definite matrix, which can be done efficiently by Cholesky decomposition.
However, as shown in {Theorem} \ref{le:xk_app_sol}, we can use $Z_{k+1}$ computed by \eqref{eq:dual_subprob2_app} to approximate the true solution $X^{\star}$.
This allows us to avoid the matrix inversion $(\Sigma + \widetilde{Y}^{\star})^{-1}$.

\subsubsection{The algorithm}
Putting together these steps, we obtain a new algorithmic variant for solving \eqref{eq:GL_prob} as presented in Algorithm~\ref{alg:A1b}.

%%%%%%%%%%%%%%%%%%%%%%%%%%%%%%%%%%%%%%%%%%
%%% + Algorithm 4.
%%%%%%%%%%%%%%%%%%%%%%%%%%%%%%%%%%%%%%%%%%
\begin{algorithm}[ht!]\caption{(\textit{An inexact primal-dual-primal homotopy proximal-Newton algorithm for \eqref{eq:GL_prob}})}\label{alg:A1b}
\begin{algorithmic}[1]
\State\textbf{Initialization:} A desired tolerance $\varepsilon > 0$, and an initial point  $Y_0$ such that $Y_0 + \Sigma \succ 0$. 
Evaluate a subgradient $\Xi_0\in\partial{\psi^{\ast}}(Y_0)$.
\vspace{0.5ex}
\State\textbf{Iteration:}~\textit{For $k = 0$ to $k_{\max}$, perform}
\vspace{0.5ex}
\State\hspace{0.70cm}\label{a2step:p1_step0} Update $\tau_{k+1}$ as in \eqref{eq:sigma_choice}.
\vspace{0.5ex}
\State\hspace{0.70cm}\label{a2step:p1_step1} Solve \eqref{eq:dual_subprob2_app} up to a tolerance $\delta_k \leq \bar{\delta}_{0} := 0.1\varepsilon$ to get $Z_{k+1}$.
\vspace{0.5ex}
\State\hspace{0.70cm}\label{a2step:p1_step2} Compute $D_k := Y_k +  \Sigma - (Y_k + \Sigma)Z_{k+1}(Y_k + \Sigma)$, and compute $\tilde{\lambda}_k$ as \eqref{eq:local_distancesb}.
\vspace{0.5ex}
\State\hspace{0.70cm}\label{a2step:p1_step3} If $\tilde{\lambda}_k \leq \varepsilon$ and $1 - \tau_{k+1} \leq \varepsilon$, then \textbf{terminate}.
\vspace{0.5ex}
\State\hspace{0.70cm}\label{a2step:p1_step4} If damped step is used, then compute $\alpha_k := \frac{\tilde{\lambda}_k  - \bar{\delta}_0}{\tilde{\lambda}_k (1 + \tilde{\lambda}_k  - \bar{\delta}_0)}$. Otherwise, set $\alpha_k := 1$.
\vspace{0.5ex}
\State\hspace{0.70cm}\label{a2step:p1_step5} Update $Y_{k+1} := Y_k + \alpha_k D_k$.
\vspace{0.5ex}
\State\hspace{0.20cm}\textit{End for $k$.}
\vspace{0.5ex}
\State\textbf{Output:}\label{a1b:p4_construct} Output $Y_k$ as an $\varepsilon$-solution of \eqref{eq:GL_dual} and $Z_{k+1}$ as an $\varepsilon$-solution of \eqref{eq:GL_prob}.
\end{algorithmic}
\end{algorithm}
%%% End of the algorithm.
%%%%%%%%%%%%%%%%%%%%%%%%%%%%%%%%%%%%%%%%%%

Let us highlight some new features of Algorithm~\ref{alg:A1b} {as} 
compared to existing methods in the literature, e.g., \cite{Friedman2008,Hsieh2011,hsieh2013big,Tran-Dinh2013b,Tran-Dinh2013a}.
\begin{itemize}

\item[(a)] Firstly, Algorithm~\ref{alg:A1b} deals with a general regularizer compared to \cite{Friedman2008,Hsieh2011,hsieh2013big}. 
When $g$ is the $\ell_1$-norm regularizer, we can apply coordinate descent methods as in \cite{Friedman2008,Hsieh2011,hsieh2013big} for solving \eqref{eq:dual_subprob2_app} to improve its practical performance.

\item[(b)] Secondly, Algorithm~\ref{alg:A1b} relies on Algorithm~\ref{alg:A1} to solve the dual problem \eqref{eq:GL_dual} instead of standard proximal-Newton methods.
It has a linear convergence rate  compared to the damped-step scheme  {which only has a} sublinear convergence rate {as shown}
in \cite{Tran-Dinh2013b,Tran-Dinh2013a}.

\item[(c)]  Thirdly, it does not require any linesearch or any additional assumption in our analysis to achieve a linear convergence rate.

\item[(d)]  Fourthly, the whole algorithm does not require any matrix inversion or Cholesky decomposition as long as we can solve the subproblem \eqref{eq:dual_subprob2_app} with a first order method.
This is an important feature for designing parallel and distributed variants of Algorithm~\ref{alg:A1b} as compared {to} \cite{hsieh2013big}.

\item[(e)] Finally, the subproblem~\eqref{eq:dual_subprob2_app} works on the original regularizer $g$ instead of the dual problem as in \cite{Tran-Dinh2013b}, which preserves the structure {such as sparsity on the iterates as promoted by the regularizer} $g$.
\end{itemize}

%%% 8. Numerical experiments.
\section{Numerical experiments}\label{sec:num_exp}
We provide some numerical experiments to illustrate our theoretical development.
Our experiments are implemented in Matlab 2018a running on a Dell Optiplex 9010, 3.4 GHz Intel Core i7-3770 with 16GB 1600 MHz DDR3 memory.

%%% 8.1. Homotopy vs. non-homotopy methods
\subsection{Lipschitz gradient and strongly convex models}\label{subsec:hom_vs_nonhom}

Now {we evaluate} the performance of the homotopy {proximal-Newton} scheme \eqref{eq:homotopy_method} by applying it to solve the following logistic regression problem with an elastic-net regularizer:
\myeq{eq:sparse_logistic_exam}{
F^{\star} := \min_{x\in\R^p}\Big\{ F(x) := \frac{1}{n}\sum_{i=1}^n\log\big(1 + \exp( - y_ia_i^{\top}x) \big) + \frac{\mu_f}{2}\Vert x\Vert^2 + \rho\Vert x\Vert_1 \Big\},
}
where $\mu_f > 0$ and $\rho > 0$ are two regularization parameters, and {$(a_i, y_i)
\in  \R^p\times \set{-1, 1}$,
$i=1,\ldots,n$ is  a given dataset. }
As shown in \cite{zou2005regularization}, {the}
elastic-net regularizer helps to remove variable limitation with more freedom than 
{the} classical LASSO model, and {it can also carter for groups of nonzero} variables.
Clearly, $f(x) := \frac{1}{n}\sum_{i=1}^n\log\big(1 + \exp( - y_ia_i^{\top}x) \big) + {\frac{\mu_f}{2}}\Vert x\Vert^2$ is $\mu_f$-strongly convex, and $L_f$-Lipschitz gradient continuous with $L_f := { \frac{1}{2n}}\norms{A}^2 + \mu_f$,  
where 
{$A^{\top} = [a_1,\ldots, a_n] \in \R^{p\times n}.$}
Moreover, the function $g(x) := \rho\norms{x}_1$ is $L_g$-Lipschitz continuous with $L_g := \rho$.
Hence, Assumption \ref{as:A3} of Theorem~\ref{th:linear_convergence1} is satisfied.

We implement Algorithm~\ref{alg:A1} to solve \eqref{eq:sparse_logistic_exam} and compare it with homotopy quasi-Newton variant,  standard proximal-gradient scheme \cite{Beck2009}, and the accelerated proximal-gradient method with line-search and {restart} \cite{Beck2009,Becker2011a,Su2014}. 
These methods are abbreviated as ``\texttt{HomoPN}'', ``\texttt{HomoQuasiPN}'', ``\texttt{PG}'', and ``\texttt{Ls-Rs-APG}'', respectively. 
We test these algorithms on several {binary} classification datasets \texttt{a1a}, \texttt{a9a}, \texttt{w1a}, \texttt{w8a}, \texttt{covtype.binary}, \texttt{news20.binary}, \texttt{rcv1.binary} and \texttt{real-sim} from \cite{CC01a}, and \texttt{mnist17} and \texttt{mnist38}  from the \texttt{mnist} dataset 
{where we choose the digits} $1$ and $7$, and the {digits} 3 and 8. 
The details of these dataset set is given in Table \ref{tbl:dataset_logistic}. 

Following \cite{defazio2014saga}, we set $\mu_f = \frac{1}{n}$. The 
{parameter $\rho$ for $\ell_1$-regularization is selected to produce about} $10$ percent of nonzeros coefficients. 
We should mention here that the subpropblem in the model \eqref{eq:sparse_logistic_exam} is an elastic-net regularized least-squares problem. 
For Algorithm 1 to be numerically efficient, it is crucial for us to solve those subproblems efficiently. 
Fortunately we can  adapt the highly efficient semismooth Newton augmented Lagrangian method in \cite{li2018highly} to solve the subproblems \eqref{eq:homotopy_method}.
We terminate the experiments when the relative gaps are less than a given tolerance $\varepsilon = 10^{-6}$, based on the KKT system of \eqref{eq:sparse_logistic_exam}. 
Moreover, for \texttt{PG} and \texttt{Ls-Rs-APG} methods, we set the maximum number of iterations at $2\times 10^4$. 
If any method does not achieve our desired accuracy after at most $2\times 10^4$ {iterations}, we use "---" to represent the result. 
Our final results are reported in Table \ref{tbl:perform_logistic}, where \texttt{iter} is the number of iterations, \texttt{time[s]} is the computational time in second, and \texttt{rgap} is the relative gap times $10^{-7}$. 
We {highlight that \eqref{eq:sparse_logistic_exam} can be solved effectively by a stochastic method to a modest level of accuracy} when the number of data points $n$ is large.
However, in our experiments, we only focus on relatively moderate datasets {as 
we are interested in solving the problems accurately to evaluate
the performance of Algorithm 1, and ignore the} comparison with stochastic methods.

\begin{table}[ht!]
\vspace{-3ex}
\begin{center}
\newcommand{\cell}[1]{{\!\!}#1{\!}}
\caption{The information of binary classification datasets used in our experiments.}\label{tbl:dataset_logistic}
\vspace{-1ex}
\resizebox{\textwidth}{!}{
\begin{tabular}{|l|c|c|c|c|c|c|c|c|c|c|}\hline
	\cell{~~~Dataset}      & \cell{a1a}      & \cell{a9a}    & \cell{covtype} & \cell{news20} & \cell{mnist17} & \cell{mnist38} & \cell{rcv1} & \cell{real-sim} & \cell{w1a} & \cell{w8a}   \\ \hline
	\cell{\#{samples}[$n$]}      & \cell{30956}  & \cell{16281} & \cell{581012} & \cell{19996}     & \cell{317402} & \cell{292363} & \cell{677399} & \cell{72309} & \cell{47272} &\cell{14951} \\ \hline
	\cell{\#features[$p$]} & \cell{123}     & \cell{122}      & \cell{54}         & \cell{1355191} & \cell{784}      & \cell{784}       & \cell{47236}   & \cell{20958} & \cell{300}      & \cell{300} \\ \hline
\end{tabular}}
\end{center}
\vspace{-4ex}
\end{table}

\begin{table}[ht!]
\vspace{-1ex}
\begin{center}
\newcommand{\cell}[1]{{\!\!}#1{\!}}
\newcommand{\cells}[1]{{\!\!}#1{\!\!}}
\caption{The performance and results of four algorithms on the logistic regression problem \eqref{eq:sparse_logistic_exam}.}\label{tbl:perform_logistic}
\vspace{-1ex}
\resizebox{\textwidth}{!}{
\begin{tabular}{|c| rrr | rrr | rrr | rrr | r | r |}
\hline
	\multirow{2}{*}{\cell{Datasets}} & \multicolumn{3}{c|}{\cell{\texttt{HomoPN}}} & \multicolumn{3}{c|}{\cell{\texttt{HomoQuasiPN}}} & \multicolumn{3}{c|}{\cell{\texttt{PG}}} & \multicolumn{3}{c|}{\cell{\texttt{Ls-Rs-APG}}} & \multirow{2}{*}{\cell{Sparsity}} & \multirow{2}{*}{\cell{$\rho~~~~$}} \\ \cline{2-13}
	&\cells{iter} & \cells{time[s]} & \cells{rgap} &\cells{iter} & \cells{time[s]} & \cells{rgap} &\cells{iter} & \cells{time[s]} & \cells{rgap} &\cells{iter} & \cells{time[s]} & \cells{rgap} &  & \\ \hline
	\cells{a1a}                       &   \cell{7} &\cell{     0.247} & \cell{0.3}      & \cell{  13} & \cell{  0.190}  & \cell{0.3}      & \cell{1049} & \cell{      12.264} & \cell{10.0}       & \cell{1088}    & \cell{     16.272}  & \cell{7.6}      & \cell{11.38\%}                   & \cell{1e-2}                    \\ \hline
	\cells{a9a}                       &  \cell{6}  &\cell{     0.153} & \cell{0.4}      & \cell{  13} & \cell{  0.169}  & \cell{1.1}      & \cell{1101}  & \cell{       6.849} & \cell{10.0}       & \cell{   651}   & \cell{       4.959}  & \cell{3.5}      & \cell{12.30\%}                   & \cell{1e-2}                    \\ \hline
	\cells{covtype}                 & \cell{12} &\cell{     3.621} & \cell{2.1}      & \cell{121} & \cell{  5.938}  & \cell{7.0}      &    ---   &             --- &  ---       & \cell{10508} & \cell{3997.766}   & \cell{9.8}      & 9.26\%                    & \cell{1e-5}                    \\ \hline
	\cells{mnist17}                 &  \cell{6}  &\cell{ 101.088} & \cell{0.3}      & \cell{  24} & \cell{25.030}  & \cell{6.2}      &    ---   &             --- &  ---       & \cell{     912} & \cell{2030.214}   & \cell{8.0}      & 10.08\%                   & \cell{2.5e-3}                  \\ \hline
	\cells{mnist38}                 &  \cell{4}  &\cell{ 115.304} & \cell{6.1}      & \cell{  14} & \cell{36.109}  & \cell{9.7}      &    ---  &             --- &   ---       & \cell{13392}  & \cell{4151.542}   & \cell{10.0}     & 11.86\%                   & \cell{5.5e-3}                  \\ \hline
	\cells{news20}                 &  \cell{5}  &\cell{   9.149}  & \cell{0.1}     & \cell{  13} & \cell{104.030}  & \cell{1.2}     & \cell{1307} & \cell{     71.579} & \cell{10.0}        & \cell{      90} & \cell{     57.225}   & \cell{3.4}       & \cell{11.04\%}                   & \cell{2.5e-6}                  \\ \hline
	\cells{rcv1}                       & \cell{11}  &\cell{  244.589} & \cell{7.7}      & \cell{  41} & \cell{782.247} & \cell{3.8}     & \cell{10291} & \cell{3254.890} & \cell{10.0}       & \cell{      409} & \cell{ 1222.426}  & \cell{5.0}      & \cell{10.14\%}                   & \cell{2.5e-6}                 \\ \hline
	\cells{real-sim}                 & \cell{6}  &\cell{      3.966} & \cell{7.7}      & \cell{  22} & \cell{  24.055} & \cell{6.5}     & \cell{    730} & \cell{   14.182}  & \cell{10.0}       & \cell{      103} & \cell{     14.255}  & \cell{9.0}     & \cell{11.25\%}                   & \cell{2e-5}                    \\ \hline
	\cells{w1a}                       & \cell{7}  &\cell{      0.620} & \cell{1.6}      & \cell{  19} & \cell{   0.426}  & \cell{6.7}     & \cell{    910} & \cell{    31.084} & \cell{10.0}       & \cell{      666} & \cell{     13.710}  & \cell{7.6}       & \cell{10.00\%}                   & \cell{1e-3}                    \\ \hline
	\cells{w8a}                       & \cell{7}  &\cell{      0.245} & \cell{1.4}      & \cell{  19} & \cell{   0.380}  & \cell{2.1}     & \cell{    891} & \cell{     9.172}  & \cell{10.0}       & \cell{      297} & \cell{       1.856} & \cell{6.6}        & \cell{10.00\%}                   & \cell{1e-3}                    \\ \hline
\end{tabular}}
\vspace{-1ex}
\end{center}
\end{table}

Table \ref{tbl:perform_logistic} shows that since $f$ is strongly convex and {has continuous} Lipschitz gradient, the standard proximal-gradient method \texttt{PG} 
{can solve} some of the problems  slightly better than the homotopy quasi-Newton method \texttt{HomoQuasiPN}, {particularly for some of the datasets where} the number of features is large (e.g., \texttt{news20} and \texttt{real-sim}). 
Note that since the problem \eqref{eq:sparse_logistic_exam} is strongly convex, both \texttt{PG} and \texttt{Ls-Rs-APG} have linear convergence rate.
\texttt{Ls-Rs-APG} outperforms \texttt{PG} in terms of iteration numbers as stated by the theory. 
But since we adopt {the} line search scheme, we found that the 
{times taken by these two methods in solving}
some of the problems {are} roughly at the same level. 
However, \texttt{HomoQuasiPN} is promising when the number of features is moderate but the number of observations is large, for instance {the} datasets \texttt{mnist17} and \texttt{mnist38}. 
The reason is that when the number of observations is large, the cost of computing the $H_k$ for \texttt{HomoPN} will be more expensive than that in \texttt{HomoQuasiPN}. 
Hence, with warm start, \texttt{HomoQuasiPN} will only need a few more iterations to converge but {with lower computing cost} in each iteration. 
In other situations, our \texttt{HomoPN} outperforms all other methods in terms of iteration numbers and {computation} time. 
Furthermore, our homotopy methods can often solve the problems more {accurately}. 
To conclude, our homotopy  methods are highly efficient for solving a varieties of large-scale datasets in logistic regression.

%%% 8.3. Non-Lipschitz gradient models.
\subsection{Self-concordant barrier models}\label{eq:experiment_design}
We illustrate the variant of Algorithm~\ref{alg:A1} in Theorem~\ref{th:linear_convergence2} {for} solving the following constrained convex problem {with a self-concordant barrier function $f$ arising
 from D-optimal experimental design}, see, e.g. \cite{harman2009approximate,lu2013computing}:
 \vspace{-0.5ex}
\myeq{eq:D-ex-design}{
F^{\star} := \min_{x\in\R^p}\Big\{ F(x) := -\log\det\Big(\sum_{i=1}^px_iA_i\Big) ~~\mid~~ \sum_{i=1}^px_i = 1, ~x\geq 0 \Big\},
}
where $A_i$ for $i=1,\cdots, p$ are $m\times m$ symmetric positive semidefinite matrices. 
If we define $f(x) := -\log\det\Big(\sum_{i=1}^px_iA_i\Big)$, and $g(x) := \delta_{\Delta_p}(x)$, where $\Delta_p$ is the standard simplex in $\R^p$, then we can reformulate \eqref{eq:D-ex-design} into \eqref{eq:composite_cvx}, and $f$ satisfies the assumptions of Theorem \ref{th:linear_convergence2}.

We implement Algorithm~\ref{alg:A1} to solve \eqref{eq:D-ex-design}, and compare it with the interior-point method in \cite{lu2013computing}, where their code is available online at \href{http://www.mypolyuweb.hk/~tkpong/OD_final_codes/}{http://www.mypolyuweb.hk/~tkpong/OD\_final\_codes/}. 
To approximately {solve the proximal-Newton subproblem
\eqref{eq:dual_subprob2_app},  we adapt the semi-smooth Newton-CG augmented Lagrangian method in \cite{li2018highly,zhao2010newton} to solve the composite convex  QP problem where the 
nonsmooth term is given by $g(x) = \delta_{\Delta_p}(x)$.}
We also compare our method with the multiplicative method proposed in \cite{harman2009approximate} and the interior-point method implemented in SDPT3-v.4.0 \cite{Toh2010}.
We abbreviate these solvers by \texttt{HomoPN}, \texttt{MUL}, \texttt{IP}, and \texttt{SDPT3}, respectively.
Unlike other well-established IP solvers, SDPT3 allows us to handle directly the log-determinant functions without reformulation or approximation.

We follow the same procedure as in \cite{lu2013computing} to generate the data and use the implementation of \texttt{MUL} from  \cite{lu2013computing}.
More precisely, we consider the following four design spaces:
\myeqn{\begin{array}{ll}
\chi_1 & := \set{ x_i = (e^{-s_i},s_ie^{-s_i},e^{-2s_i},s_ie^{-2s_i})^{\top},~1\leq i\leq p } \subset\R^4, \vspace{0.75ex}\\
\chi_2 & := \set{ x_i = (1,s_i,s_i^2,s_i^3)^{\top},~1\leq i\leq p } \subset\R^4, \vspace{0.75ex}\\
\chi_3& := \set{ x_{(i-1)\lceil \sqrt{p}\rceil+j} = (1,r_i,r_i^2,t_j,r_it_j)^{\top},~1\leq i,j\leq \lceil \sqrt{p}\rceil } \subset\R^5, \vspace{0.75ex}\\
\chi_4 & := \set{ x_i = (t_i,t_i^2,\sin(2\pi t_i), \cos(2\pi t_i))^{\top}, ~1\leq i\leq p } \subset\R^4,
\end{array}}
where $s_i = \frac{3i}{p}, r_i = \frac{2i}{p}-1$ and $t_i = \frac{i}{p}$. 

For each design space, we set $A_i := x_ix_i^{\top}$ for $i=1,\cdots, p$, with $p =10000,50000,100000$ for $\chi_1,\chi_2,\chi_4$, and $p = 10000,40000,90000$ for $\chi_3$. 
In this case, the problem dimension $m$ is $m = 4$ in $\chi_1$, $\chi_2$ and $\chi_4$ and $m = 5$ in $\chi_3$.
The performance of these four methods on $9$ problems of different sizes are reported in Table~\ref{tbl:perform_doptimal}, where (\texttt{\#Iterations}) {and} \texttt{Time[s]} 
denote the number of iterations and computational {time taken, respectively, 
and $F(x^k)$ is the approximate optimal objective value attained for} 
\eqref{eq:D-ex-design}.
 
\begin{table}[ht!]
\begin{center}
%\vspace{-3ex}
\caption{The performance of $4$ algorithms on the $D$-optimal experimental design problem \eqref{eq:D-ex-design}.}\label{tbl:perform_doptimal}
\vspace{-1ex}		
\resizebox{\textwidth}{!}{
	\begin{tabular}{@{} r | r |l|l|l|l|l|l|l|l@{}}
		\toprule
		\multicolumn{2}{l|}{Problem}          & \multicolumn{4}{c|}{(\texttt{\#Iterations}) \texttt{Time[s]}} & \multicolumn{4}{c}{Objective value $F(x^k)$} \\ \midrule
		$\chi_i$               &  $p~~~$    & \texttt{~HomoPN}  & \texttt{~MUL\cite{harman2009approximate}} & \texttt{~IP\cite{lu2013computing}} & \texttt{SDPT3} & \texttt{~HomoPN}  & \texttt{~MUL\cite{harman2009approximate}} & \texttt{~IP\cite{lu2013computing}} & \texttt{SDPT3} \\ \midrule
		1                      & 10000  & (7)0.088 &(2509)0.407&(127)0.262&0.399 &20.51196&20.51254&20.51195&20.51241\\ 
		1                      & 50000  & (7)0.368&(2510)1.050 &(122)0.998 &1.127 &20.50907 & 20.50981 &20.50907&20.50908  \\ 
		1                      & 100000 &(7)0.956 &(2855)3.073&(120)1.962&2.008& 20.50871&20.50943&20.50872&20.50883\\ \midrule
		2                      & 10000  &(7)0.057&(2493)0.571&(102)0.211& 0.368&0.410236&0.410745&0.410221& 0.410220\\ 
		2                      & 50000  &(6)0.266&(3278)3.122& (101)0.867&1.280& 0.409263&0.409964 &0.409267&0.409260 \\ 
		2                      & 100000 &(5)0.525 &(3910)9.361&(100)1.935&2.701&0.409143&0.409795 &0.409154&0.409145\\ \midrule
		3                      & 10000  &(5)0.048&(1619)0.428&(97)0.236&0.415&5.142670 &5.142919&5.142671& 5.142670\\  
		3                      & 40000  &(5)0.214& (2208)2.378&(97)0.784& 1.578&5.082114&5.082363&5.082119&5.082113\\  
		3                      & 90000  &(5)0.535&(2407)7.178& (95)1.774& 3.827& 5.062011&5.062261&5.062024&5.062011\\ \midrule
		4                      & 10000  &(6)0.069&(2512)0.488&(135)0.271& 0.325& 7.251897&7.252565&7.251890&7.251888\\ 
		4                      & 50000  & (6)0.402&(3413)2.870&(130)1.045&1.064&7.251892&7.252527&7.251895 &7.251889\\ 
		4                      & 100000 & (6)1.032& (4013)8.480&(128)2.554&2.294&7.251891&7.252461&7.251902&7.251888\\ \bottomrule
	\end{tabular}}
\end{center}	
\vspace{-2ex}	
\end{table}

In Table \ref{tbl:perform_doptimal}, the objective value $F(x^k)$ is rounded off to seven significant digits. 
We can see that our homotopy {method}, \texttt{HomoPN} outperforms {the} multiplicative algorithm \texttt{MUL} in terms of computational time, and 
{achieving} much smaller objective values in all the instances.
Our \texttt{HomoPN} also outperforms the interior point method (\texttt{IP}) and SDPT3 in terms of time and also gives slightly better objective values in  most instances.

To see the performance of our method compared to \texttt{MUL} and \texttt{IP}, in the following test, we extend the dimension of datasets in $\chi_1$, $\chi_2$, and $\chi_3$ as follows:
\myeqn{{\!\!\!}\begin{array}{ll}
{\chi_1(8)} &:= \set{ x_i = (e^{-s_i},s_ie^{-s_i},e^{-2s_i},s_ie^{-2s_i},e^{-3s_i},s_ie^{-3s_i},e^{-4s_i},s_ie^{-4s_i})^{\top},~~1\leq i\leq p } \subset \R^8, \vspace{0.75ex}\\
{\chi_2(10)} &:= \set{ x_i = (1,s_i,s_i^2,\cdots,s_i^9)^{\top},~~1\leq i\leq p } \subset\R^{10}, \vspace{0.75ex}\\
{\chi_3(10)} &:= \set{ x_{(i-1)\lceil \sqrt{p}\rceil+j} = ( 1,r_i,r_i^2,r_i^3,t_j,r_it_j,t_jr_i^2,t_j^2,t_j^3,r_it_j^2)^{\top},~~1\leq i,j\leq \lceil \sqrt{p}\rceil } \subset\R^{10}.
\end{array}{\!\!\!}}
We first run \texttt{MUL} up to $10,000$ iterations, and check whether the \texttt{HomoPN} and \texttt{IP} methods can solve the problem in terms of the function value. 
We terminate the algorithms when the objective value is less than the value procedured by \texttt{MUL}. 
When the \texttt{IP} method cannot achieve this objective value, we terminate it and record the time and objective value after at most $10,000$ iterations (this is a large number of iterations  for interior point methods). 
The computation results are {presented} in Table~\ref{tbl:perform_doptimal2}.
{For these larger dimensional} problems, \texttt{SDPT3} 
{fails to solve them to the required accuracy}, and we do not add this method to our experiments.

\begin{table}[ht!]
%\vspace{-2ex}
\newcommand{\cellb}[1]{{\color{blue}#1}}
\begin{center}
\caption{The performance of $3$ algorithms on \eqref{eq:D-ex-design} for $\chi_1(8)$, $\chi_2(10)$ and $\chi_3(10)$.}\label{tbl:perform_doptimal2}
\vspace{-1ex}		
%\resizebox{\textwidth}{!}{
%\begin{small}
\begin{tabular}{r|r|r|r|r|r|r|r}\toprule
		\multicolumn{2}{c|}{Problem} & \multicolumn{3}{c|}{\texttt{Time {[}s{]}}} & \multicolumn{3}{c}{Objective value $F(x^k)$}    \\ \midrule
		$\chi$        & $p~~~$             & \texttt{HomoPN}    &  \texttt{MUL\cite{harman2009approximate}} & \texttt{IP\cite{lu2013computing}} &  \texttt{HomoPN} & \texttt{MUL\cite{harman2009approximate}} & \texttt{IP\cite{lu2013computing}}  ~~\\ \midrule
		1             & 10000          & 0.203  & 9.172      & 57.830   & 92.552013  & 92.552097 & \cellb{96.377758} \\ %\midrule
		1             & 50000         & 1.134  & 48.288      & 264.658  & 92.541322 & 92.541593 & \cellb{96.373413} \\ %\midrule
		1             & 100000        & 2.589  & 97.433     & 468.258  & 92.539162  & 92.540231  & \cellb{96.372813} \\ \midrule
		2             & 10000         & 0.303  & 6.950     & 71.1327   & 18.424093  & 18.424563 & \cellb{22.387641} \\ %\midrule
		2             & 50000         & 1.555   & 43.340     & 229.223  & 18.416971 & 18.417293  & \cellb{22.388450} \\ %\midrule
		2             & 100000        & 2.862   & 84.389     & 543.588   & 18.416223  & 18.416356  & \cellb{22.389739} \\ \midrule
		3             & 10000         & 0.188    & 17.657     & 2.140    & 30.158301  & 30.158329 & {30.158325}  \\ %\midrule
		3             & 50000         & 0.822  & 72.969    & 240.569  & 29.956255  & 29.956513 & \cellb{37.689778} \\ %\midrule
		3             & 100000        & 2.217  & 165.105     & 556.789   & 29.889216 & 29.889565  & \cellb{37.692640}  \\ \bottomrule
\end{tabular}
%\end{small}%}
\end{center}	
%\vspace{-1ex}
\end{table}

From Table~~\ref{tbl:perform_doptimal2}, we can see that our algorithm outperforms {both the} \texttt{MUL} and \texttt{IP} methods in terms of {computation}
time, while the IP method cannot achieve the desired accuracy when 
{the dimension of the matrix variable}
$m$ is increased.
It also illustrates the ability of our method to solve the problems to an intermediate accuracy. 
The reason why the IP method cannot solve the problem is that when the dimension is large, the Newton system is very ill-condition, and {the solver cannot
handle the ill-conditioning difficultly satisfactorily.}
%So it is hard to move to next step, so the Ip method will not able to solve the problem at $\mu$ near zero. 
%This a common disadvantage for interior point method, and the reason why SDPT3 can not solve the problem neither. 

%%% 8.3. Non-Lipschitz gradient models.
\subsection{Non-Lipschitz gradient models}\label{eq:Poisson_exam}
Next, we illustrate  the variant of Algorithm~\ref{alg:A1} stated in Theorem  \ref{th:linear_convergence2b} under Assumption \ref{as:A4} through a non-Lipschitz gradient model.
We consider the following problem from Poisson regression \cite{ivanoff2016adaptive,jia2017sparse}:
\myeq{eq:p-reg-exam}{
F^{\star} := \min_{x\in\R^p}\Big\{ F(x) := \frac{1}{n}\sum_{i=1}^n\left(y_i\exp\left(-\tfrac{a_i^{\top}x}{2}\right) + \exp\left(\tfrac{a_i^{\top}x}{2}\right)\right) + \frac{\mu_f}{2}\norms{x}^2_2 +  \rho\norms{x}_1 \Big\},
}
where $a_i\in\R^p$, $y_i\in \R$ are given, and $\mu_f > 0$ and $\rho > 0$ are given regularization parameters.

Let $f(x) := \frac{1}{n}\sum_{i=1}^n\left(y_i\exp\left(-\tfrac{a_i^{\top}x}{2}\right) + \exp\left(\tfrac{a_i^{\top}x}{2}\right)\right) + \frac{\mu_f}{2}\norms{x}^2_2$.
Note that if $\mu_f > 0$, then by Proposition \ref{pro:self_con_class} 
{in Appendix A, $f$ is $M_f$-self-concordant} with $M_f := \frac{\max_i\set{\Vert a_i\Vert_2 }}{\sqrt{\mu_f}}$. 
Moreover, it is also $\mu_f$-strongly convex, and $g(x) := \rho\norms{x}_1$ is $L_g$-Lipschitz continuous with $L_g := \rho$.
Hence, problem \eqref{eq:p-reg-exam} satisfies Assumption \ref{as:A4}. Consequently, the results of Theorem \ref{th:linear_convergence2b} hold for this problem.

We implement Algorithm~\ref{alg:A1} to solve \eqref{eq:p-reg-exam} and compare {it} with a quasi-Newton method using BFGS with linesearch scheme and a limited-memory quasi-Newton method (L-BFGS) with both linesearch and restarting scheme. 
We name these three schemes \texttt{HomoPN}, \texttt{BFGS}, and \texttt{L-BFGS-LS-R}, respectively.
Similar to the above example, we {adapt} the semi-smooth Newton-CG augmented Lagrangian method  in \cite{li2018highly,zhao2010newton} to approximately 
{solve the convex composite quadratic programming subproblem \eqref{eq:cvx_subprob_k}
 in \texttt{HomoPN}.
 }

Note that since the gradient $\nabla{f}$  is not Lipschitz continuous, first-order methods such as proximal gradient-type methods are not applicable. 
Our experiment reveals that first-order methods such as accelerated  Barzilai-Borwein step-size  proximal gradient algorithms  often failed to converge due to the explosion of the estimated Lipschitz constant of $\nabla{f}$. 

We test three algorithms on $6$ datasets downloaded from the UCI dataset repository \cite{lichman2017uci} and Kaggle (\href{https://www.kaggle.com/datasets}{https://www.kaggle.com/datasets}).
Such datasets are used to predict {the number of communications} for a social post (news and facebook), the review score of a hotel or wine (vegas and wine), 
{the number of goals scored in a game season or the number of 911 calls} in a period of time.
The sizes of these datasets are given in Table~\ref{tbl:dataset_detail}.
Since these are raw datasets, to use them in our model \eqref{eq:p-reg-exam}, we perform a sequence of data-preprocessing routines to make our feature matrices well scaled by using methods {such as} one-hot encoding and min-max scaling. 
%
%For our Poisson regression model, we have performed a sequence of data-preprocessings to make our feature matrices well scaled by using methods like one-hot encoding and min-max scaling. Our tested datasets are found on UCI dataset repository \cite{lichman2017uci} and Kaggle (see \href{url}{https://www.kaggle.com/datasets}). We summarize the basic information of our six datasets, see in Table \ref{table:dataset_detail}. These datasets are used to predict a number of communication for a social post (news and facebook), the review score of a hotel or a wine (vegas and wine), a number of goals of a season of a game or a number of call to 911 in a period of time.

\begin{table}[ht!]
\begin{center}
\vspace{-3ex}
\caption{Poisson datasets information.}\label{tbl:dataset_detail}
\vspace{-1ex}
	\begin{tabular}{|c|c|c|c|c|c|c|}
		\hline
		Dataset     & vegas & games & news  & 911   & facbook & wine   \\ \hline
		\# {samples}    & 504   & 4940  & 39644 & 33059 & 99003   & 150930 \\ \hline
		\# features & 90    & 156   & 59    & 21    & 254     & 1154   \\ \hline
	\end{tabular}
\end{center}
\vspace{-2ex}
\end{table}

\begin{table}[ht!]
\begin{center}
\vspace{-3ex}
\caption{The performance and results of three algorithms on the Poisson model \eqref{eq:p-reg-exam}.}\label{tbl:perform_poisson}
\vspace{-1ex}
	\begin{tabular}{|c|c|c|c|c|c|c|c|}
		\hline
		\multirow{2}{*}{Datasets} & \multicolumn{3}{c|}{Time [s]}    & \multicolumn{3}{c|}{Number of iterations} & \multirow{2}{*}{$F(x^k)$} \\ \cline{2-7}
		& \texttt{HomoPN} & \texttt{BFGS}   & \texttt{L-BFGS-LS-R} & \texttt{HomoPN} & \texttt{BFGS} & \texttt{L-BFGS-LS-R} &                      \\ \hline
		vegas                     & 0.04 & 0.23   & 0.21        & 5    & 11   & 11          & 4.2695e+00           \\ \hline
		games                     & 0.12 & 1.34   & 0.99        & 6    & 22   & 22          & 4.5610e+00           \\ \hline
		news                      & 1.82 & 52.94  & 48.57       & 9    & 220  & 215         & 1.1447e+02           \\ \hline
		911                       & 0.10 & 0.87   & 0.55        & 6    & 15   & 15          & 8.2290e+00           \\ \hline
		facebook                  & 1.54 & 62.36  & 64.68       & 8    & 39   & 48          & 4.6651e+01           \\ \hline
		wine                      & 2.89 & 455.28 & 459.81      & 6    & 40   & 40          & 1.9819e+01           \\ \hline
	\end{tabular}
\end{center}
\vspace{-1ex}	
\end{table}

We test three algorithms: \texttt{HomoPN}, \texttt{BFGS}, and \texttt{L-BFGS-LS-R} on the datasets in Table \ref{tbl:dataset_detail}.
The computation results are reported in Table \ref{tbl:perform_poisson}, where $F(x^k)$ is the objective value of \eqref{eq:p-reg-exam}. 
We can see from the results that while all methods we {have} tested can solve the problem using real datasets (they all reached {almost the same} optimal function values), the performance of {the} two quasi-Newton methods are similar and our homotopy method outperforms them both in terms of computational times and number of iterations. 
Moreover, based on our observation, when the problem is hard (i.e. the linear system is very ill-conditioned), the number of iterations in the quasi-Newton-type methods increases rapidly. However, our homotopy method is relatively stable in terms of the number of iterations, as was indicated in Section \ref{subsec:global_linear_conv}. 
{To} conclude, our method is highly efficient for real datasets.

%%%% 8.2. Initial point independence of the new homotopy methods. 
\subsection{The initial point independence of Algorithm~\ref{alg:A1b}}\label{subsec:global_linear_conv}
Our goal in this example is to observe the independence of performance w.r.t. to the initial point $X_0$ in our inexact primal-dual-primal homotopy PN scheme, i.e. Algorithm~\ref{alg:A1b}, compared to standard proximal-Newton methods for sparse optimization.
We demonstrate this observation on the sparse inverse covariance estimation problem covered by \eqref{eq:GL_prob} which perfectly fits our assumptions.

As mentioned, in theory, we have shown that Algorithm~\ref{alg:A1b} can achieve a global linear convergence rate. 
In practice, however, this rate may {still be} slow. 
We instead update the homotopy parameter $\tau$ in Algorithm~\ref{alg:A1} by a longer step based on a linesearch such that the new {iterate} $X_k$ remains in $\dom{\phi}$.
This trick allows us to go faster from $\tau\approx 0$ to $\tau\approx 1$ within a few iterations instead of using the worst-case {factor}.
Then, we fix the homotopy parameter $\tau_k \approx 1$ and apply a few full-step proximal-Newton iterations to compute the desired solution (Stage 3 of Algorithm~\ref{alg:A1}).

We implement Algorithm~\ref{alg:A1b} and compare it with the primal proximal-Newton method in \cite{Tran-Dinh2013a}.
We abbreviate these methods by \texttt{HomoPN} and \texttt{Primal PN}, respectively.
We use a restarting accelerated proximal-gradient algorithm to approximate the proximal Newton direction.
We generate two problem instances using the same procedure as in \cite{Li2010} with $p= 500$ and $p=1000$, respectively.
In order to obtain a desired sparse solution of \eqref{eq:GL_prob}, we choose $\rho := 0.01$.
To see the affect of the initial point $X_0$ {on} the methods we generate two different initial points $X_0$.
\begin{itemize}
\item\textit{Case 1 (dense)}: $X_0 := \Sigma^{\dagger} + 10^{-6}\times \Id$, where $\Sigma^{\dagger}$ is the pseudo-inverse of $\Sigma$.
\item\textit{Case 2 (sparse)}:  $X_0 := \mathrm{diag}(1./\mathrm{diag}(\Sigma))$ a diagonal matrix. 
\end{itemize}
Figure \ref{fig:linear_convergence} shows the convergence of \texttt{HomoPN} and \texttt{Primal PN}  for these two cases: the sparse initial point $X_0$ and the dense initial point $X_0$.

\begin{figure}[ht!]
\vspace{-1ex}
\begin{center}
\includegraphics[width = 1\textwidth]{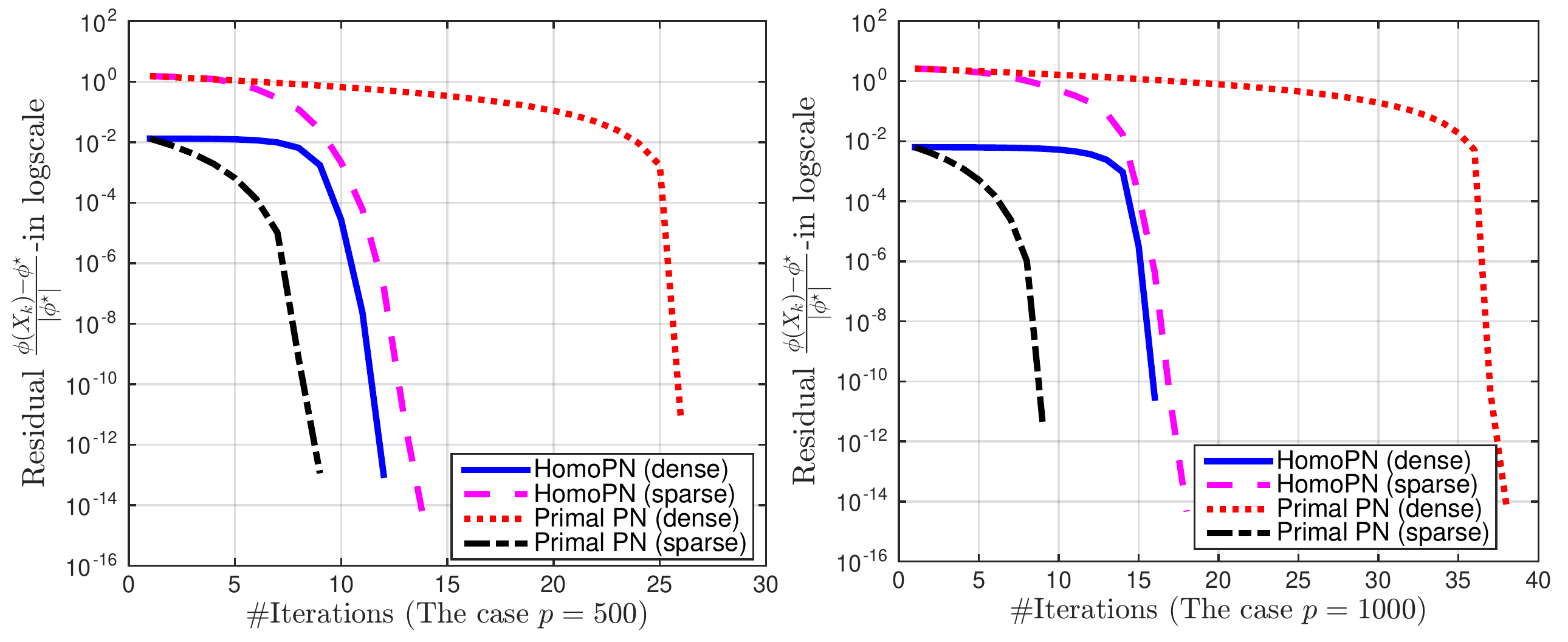}
\vspace{-3ex}
\caption{The convergence of the two algorithms. Left: For $p=500$, and Right: For $p = 1000$.}\label{fig:linear_convergence}
\end{center}
\vspace{-4ex}
\end{figure}

Figure~\ref{fig:linear_convergence} shows that our new algorithmic variant (\texttt{HomoPN}) {depends weakly} on the initial point $X_0$, while the primal proximal Newton method (\texttt{Primal PN}) in \cite{Tran-Dinh2013a} strongly depends on the choice of $X_0$.
The convergence rate of \texttt{HomoPN} is divided into two parts: the homotopy {part} with a linear rate, and the refining part with a quadratic rate.
 Figure \ref{fig:linear_convergence} shows {the} quadratic convergence on the relative objective residual $\frac{\phi(X_k) - \phi^{\star}}{\vert\phi^{\star}\vert}$ of both algorithms
 {when the iterates approach the optimal solution}, but \texttt{Primal PN (dense)} requires a large number of iterations to reach its quadratic convergence region.

%%%% 8.5. The performance of the primal-dual-primal  approach.
%\subsection{The performance of the primal-dual-primal  approach}\label{subsec:SICE_dual_approach}
%The third experiment is to compare our method and recent state-of-the-arts for solving the sparse inverse covariance estimation problem \cite{Friedman2008}.
%We choose two methods: the primal proximal Newton (primal PN) method in \cite{Tran-Dinh2013a} and \texttt{Quic} in \cite{Hsieh2011}.
%To the best of our knowledge, \texttt{Quic} is the most efficient method for solving this problem when the solution is sparse.
%
%\liang{We can illustrate this on the inverse covariance problem but with different regularizers than that of the $\ell_1$-norms}

%%% Acknowledgements.
\begin{footnotesize}
\section*{Acknowledgments}
This work is supported in part by the NSF grant, no. DMS-16-2044 (USA).
\end{footnotesize}
%%% End of Acknowledgments.

%%%% Appendix 1.1.
\appendix
\section{Self-concordant and generalized self-concordant functions}\label{sec:apdx:proofs}
\normalsize
Our central concept is the self-concordance and self-concordant barrier introduced by Nesterov and Nemirovskii \cite{Nesterov2004,Nesterov1994}, and extensions to generalized self-concordance studied in \cite{SunTran2017gsc}.

For a three-time continuously differentiable and convex function $f$,  let $\nabla^3{f}(x)[u]$ denote the third-order derivative along the direction $u$.
We denote this class of functions by $\Cc^3(\dom{f})$.

%% Definition 2.2.
\begin{definition}\label{de:gen_sel_con_def}
For $f : \dom{f}\subseteq \R^{p} \to \R$ in $\Cc^3(\dom{f})$, we say that:
\begin{itemize}
\item[$\mathrm{(a)}$] $f$ is $M_f$-self-concordant $($$M_f \geq 0$$)$ if for any $x\in\dom{f}$ and $u\in\R^p$, we have
\myeqn{
\vert \iprods{\nabla^3{f}(x)[u]u, u}\vert \leq M_f\iprods{\nabla^2{f}(x)u, u}^{3/2}.
}
If $M_f = 2$, then we say that $f$ is standard self-concordant.
\item[$\mathrm{(b)}$] $f$ is $(M_f, \kappa)$-generalized self-concordant if for any $x\in\dom{f}$ and $u, v\in\R^p$, we have
\myeqn{
\vert \iprods{\nabla^3{f}(x)[u]v, v}\vert \leq M_f\norms{v}_x^2\norms{u}_x^{\kappa-2}\norms{u}_2^{3-\kappa},
}
where if $0 \leq \kappa < 2$ or $\kappa > 3$, then we use {the} convention $\frac{0}{0} = 0$.
\item[$\mathrm{(c)}$]
$f$ is a $\nu_f$-self-concordant barrier $(\nu_f \geq 1)$ if $f$ is standard self-concordant with $\dom{f} = \intx{\Xc}$, $f(x)$ tends to $+\infty$ as $x$ approaches the boundary $\partial{\Xc}$ of $\Xc$, and
\myeqn{
\sup_{u \in\R^p} \left \{2\iprods{\nabla{f}(x), u} - \Vert u \Vert_{x}^2\right \} \leq 
\nu_f, ~~\forall x \in\dom{f}.
}
\item[$\mathrm{(d)}$] $f$ is {a} $\nu_f$-self-concordant \textit{logarithmically homogeneous  barrier} function of $\Xc$ if {it} is self-concordant barrier and $f(\tau x) = f(x) - \nu_f\log(\tau)$ for all $x\in\intx{\Xc}$ and $\tau > 0$.
\end{itemize}
\end{definition}

It is clear that affine and convex quadratic functions are standard self-concordant but not self-concordant barrier.
The logistic, Poisson, and DWD models discussed in the introduction are  generalized self-concordant with $\kappa\in [2,3]$, but not self-concordant.
Self-concordant barriers are often associated with convex sets such as cones and convex bodies.
Several simple sets are equipped with a  self-concordant barrier.
For instance, $f(x) := -\sum_{i=1}^p\log(x_i)$ is a $p$-self-concordant barrier of $\R^p_{+}$, $f(X) := -\log\det(X)$ is a $p$-self-concordant barrier of  $\Spd^p_{+}$, and $f(x, t) = -\log(t^2 - \iprods{x,x})$ is a $2$-self-concordant barrier of the Lorentz cone $\mathcal{L}_{p+1} := \set{(x, t) \in\R^p\times\R_{+} \mid \norm{x} \leq t}$.
The relation between self-concordant and generalized self-concordant is stated in the following proposition.
%%% Propostion 10.
\begin{proposition}\cite{SunTran2017gsc}\label{pro:self_con_class}
Let $f\in\Cc^3(\dom{f})$ be an $(M_f,\kappa)$-generalized self-concordant and $\kappa \in (0, 3]$ as defined in Definition~\ref{de:gen_sel_con_def}.
Then, if $f$ is strongly convex with a strong convexity parameter $\mu_f > 0$, then $f$ is $\widehat{M}_f$-self-concordant with $\widehat{M}_f := \frac{M_f}{(\sqrt{\mu_f})^{3-\kappa}}$.
\end{proposition}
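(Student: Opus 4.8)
The plan is to obtain the $\widehat{M}_f$-self-concordance inequality of Definition~\ref{de:gen_sel_con_def}(a) directly from the $(M_f,\kappa)$-generalized self-concordance inequality of Definition~\ref{de:gen_sel_con_def}(b), by specializing the second test direction and then trading an $\ell_2$-norm factor for a local-norm factor via strong convexity. No fixed-point or homotopy machinery is needed here; this is a pointwise estimate at each $x\in\dom{f}$.

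First, I would fix an arbitrary $x\in\dom{f}$ and $u\in\R^p$, and apply Definition~\ref{de:gen_sel_con_def}(b) with $v := u$. This yields
\[
\abs{\iprods{\nabla^3{f}(x)[u]u, u}} \;\le\; M_f\,\norm{u}_x^2\,\norm{u}_x^{\kappa-2}\,\norm{u}_2^{3-\kappa} \;=\; M_f\,\norm{u}_x^{\kappa}\,\norm{u}_2^{3-\kappa},
\]
where the identity $\norm{u}_x^2\,\norm{u}_x^{\kappa-2} = \norm{u}_x^{\kappa}$ is valid for every $\kappa\in(0,3]$, and both sides vanish when $u=0$ (consistent with the $0/0=0$ convention in Definition~\ref{de:gen_sel_con_def}(b)). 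Next, I would invoke the $\mu_f$-strong convexity of $f$: this gives $\nabla^2{f}(x)\succeq\mu_f\Id$ for all $x\in\dom{f}$, hence $\norm{u}_x^2 = \iprods{\nabla^2{f}(x)u, u}\ge\mu_f\norm{u}_2^2$, i.e. $\norm{u}_2\le\frac{1}{\sqrt{\mu_f}}\norm{u}_x$. Because $3-\kappa\ge 0$, raising this inequality to the power $3-\kappa$ preserves it, so $\norm{u}_2^{3-\kappa}\le(\sqrt{\mu_f})^{-(3-\kappa)}\norm{u}_x^{3-\kappa}$. Substituting this bound into the display above gives
\[
\abs{\iprods{\nabla^3{f}(x)[u]u, u}} \;\le\; \frac{M_f}{(\sqrt{\mu_f})^{3-\kappa}}\,\norm{u}_x^{\kappa}\,\norm{u}_x^{3-\kappa} \;=\; \widehat{M}_f\,\iprods{\nabla^2{f}(x)u, u}^{3/2},
\]
which is precisely the $\widehat{M}_f$-self-concordance inequality with $\widehat{M}_f = M_f/(\sqrt{\mu_f})^{3-\kappa}$.

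I do not expect a genuine obstacle in this argument; the only delicate points are bookkeeping. When $\kappa=3$ the exponent $3-\kappa$ is zero, the $\ell_2$-norm factor disappears, and the bound $\abs{\iprods{\nabla^3{f}(x)[u]u, u}}\le M_f\norm{u}_x^3$ already is the desired inequality with $\widehat{M}_f = M_f$; when $u=0$ both sides are zero and nothing needs checking. It is also worth remarking that the proof uses only the pointwise bound $\mu_f\Id\preceq\nabla^2{f}(x)$, so the statement continues to hold if $f$ is merely restricted strongly convex on the relevant subspace, as discussed after Lemma~\ref{le:bound_subgrad}; this is exactly what is invoked for the logistic, Poisson, and DWD examples in the introduction.
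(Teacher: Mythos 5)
Your proof is correct: specializing the generalized self-concordance inequality to $v=u$ and then converting the factor $\norm{u}_2^{3-\kappa}$ into $(\sqrt{\mu_f})^{-(3-\kappa)}\norm{u}_x^{3-\kappa}$ via $\nabla^2 f(x)\succeq\mu_f\Id$ yields exactly the $\widehat{M}_f$-self-concordance bound, with the endpoint case $\kappa=3$ and $u=0$ handled as you note. The paper itself gives no proof (the proposition is quoted from \cite{SunTran2017gsc}), and your argument is essentially the same pointwise estimate used there, so there is nothing further to reconcile.
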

%%% End of proposition.
Proposition~\ref{pro:self_con_class} shows that the regularized logistic, Poisson, and DWD models in the introduction become self-concordant due to the aid of a regularization term $\frac{\mu_f}{2}\norms{x}^2$.

Let $f$ be a $\nu_f$-self-concordant barrier of $\Xc$. 
Then
\myeq{eq:analytical_center}{
x^{\star}_f := \argmin_x\set{ f(x) \mid x\in\intx{\Xc} },
}
is referred to as the \textit{analytical center} of $\Xc$.
If $\Xc$ is bounded, then $x^{\star}_f$ exists and is unique. 
Otherwise, we often add an artificial bound or consider $f$ on a sublevel set of $F$ in \eqref{eq:composite_cvx} to guarantee the existence of $x_f^{\star}$.
Let $\theta_f := \nu_f + 2\sqrt{\nu_f}$ be {defined} for a general self-concordant barrier, and $\theta_f := 1$ be {defined} for a self-concordant logarithmically homogeneous barrier.
Then, $\Vert v \Vert_{x}^{*} \leq \theta_f\Vert v \Vert_{x_f^{\star}}^{*} $ and $\Vert x - x^{\star}_f\Vert_{x^{\star}_f} \leq \theta_f$ for $x\in\dom{f}$ and $v\in\R^p$ if $x^{\star}_f$ exists.

Let $\Kc$ be a proper, closed, pointed, and convex cone.
If $\Kc$ is endowed with a $\nu_f$-self-concordant logarithmically homogeneous  barrier function $f$, then its Legendre transformation \cite{Nesterov1994}:
\myeqn{
f^{*}(y) := \sup_{v}\set{ \iprods{y, v} - f(v) \mid v\in\Kc},
}
is also a $\nu_f$-self-concordant logarithmically homogeneous  barrier of the anti-dual cone $-\Kc^{*}$ of $\Kc$.
For instance, if $\Kc = \Spd_{+}^p$, then $\Kc^{*} = S_{+}^p = \Kc$ (self-dual cone).
A barrier function of $\Spd_{+}^p$ is $f(X) := -\log\det(X)$. Hence, $f^{*}(Y) = -p - \log\det(-Y)$ is a barrier function of $-\Kc^{*}$.

%%% B. Convergence analysis of the homotopy variable-metric methods.
\section{Convergence and iteration-complexity analysis of Algorithm \ref{alg:A1}}\label{apdx:convergence_analysis}
We break {up} our analysis into several lemmas and theorems.
The following  lemmas provide several key estimates for our proofs.

%%% Lemma 11.
\begin{lemma}\label{le:auxi_results}
Let $x^0 \in\dom{F}$  be given, $\xi^0\in\partial{g}(x^0)$, and $L_g$ be the Lipschitz constant of $g$.
Let $x^{\ast}_{\tau}$ and $x^{\ast}_{\hat{\tau}}$ be two solutions of  \eqref{eq:new_reparam_opt_cond} at $\tau \in (0,1]$ and $\hat{\tau}\in (0,1]$, respectively. 
Then {the following results hold.}
\begin{itemize}
\item[$\mathrm{(a)}$] 
If $f$ is $\mu_f$-strongly convex, then we have
\myeq{eq:lm11_bound_fx0_b1}{
\begin{array}{ll}
& \norms{x^0 - x^{\ast}_{\tau}}_2 \leq \tfrac{\norms{\nabla{f}(x^0) + \xi^0}_2}{\mu_f}~~~\text{and}\vspace{1ex}\\
& \Vert x^{\ast}_{\tau} - x^{\ast}_{\hat{\tau}}\Vert_2\leq \frac{\vert\tau - \hat{\tau}\vert}{\hat{\tau}\, \mu_f}\Vert \nabla{f}(x^{\ast}_{\tau}) + \xi^0\Vert_2 \leq \frac{\vert\tau - \hat{\tau}\vert}{\tau\,\hat{\tau}\, \mu_f}\left( L_g + \norms{\xi^0}_2\right).
\end{array}}
\item[$\mathrm{(b)}$] If $f$ is self-concordant, then we have
\myeq{eq:lm11_bound_fx0_b2}{
\begin{array}{ll}
& \tfrac{\norms{x^0 - x^{\ast}_{\tau}}_{x^{\ast}_{\tau}}}{1 ~+~ \norms{x^0 - x^{\ast}_{\tau}}_{x^{\ast}_{\tau}}} \leq  \norms{\nabla{f}(x^0) + \xi^0}_{x^{\ast}_{\tau}}^{\ast}~~\text{and}~~\vspace{1ex}\\
&\frac{\Vert x^{\ast}_{\tau} - x^{\ast}_{\hat{\tau}}\Vert_{x^{\ast}_{\tau}}}{1 ~+~ \Vert x^{\ast}_{\tau} - x^{\ast}_{\hat{\tau}}\Vert_{x^{\ast}_{\tau}}} \leq \frac{\abs{\tau - \hat{\tau}}}{\hat{\tau}}\Vert \nabla{f}(x^{\ast}_{\tau}) + \xi^0 \Vert_{x^{\ast}_{\tau}}^{\ast} \leq \frac{2L_g\abs{\tau - \hat{\tau}}}{\tau\hat{\tau}}.
\end{array}}
\item[$\mathrm{(c)}$]
Let $x^{\ast}_t$ and $x^{\ast}_{\hat{t}}$  be two solutions of  \eqref{eq:opt_cond_auxi} at $t \in (0, 1]$ and $\hat{t} \in (0, 1]$, respectively.
If $f$ is self-concordant, then 
\myeq{eq:lm11_bound_fx0_b4}{{\!\!\!}
\tfrac{\Vert x^{\ast}_t - x^0\Vert_{x^0}}{1 ~+~ \Vert x^{\ast}_t - x^0\Vert_{x^0}} \leq \vert t-1\vert\norms{\nabla{f}(x^0) + \xi^0}_{x^0}^{\ast} ~\text{and}~
\tfrac{\Vert x^{\ast}_t - x^{\ast}_{\hat{t}}\Vert_{x^{\ast}_t}}{1 ~+~ \Vert x^{\ast}_t - x^{\ast}_{\hat{t}}\Vert_{x^{\ast}_t}} \leq \vert t-\hat{t}\vert\norms{\nabla{f}(x^0) + \xi^0}_{x^{\ast}_t}^{\ast}.
{\!\!\!\!\!}}
\item[$\mathrm{(d)}$] 
Let $x^{\ast}_s$ be a solution of either \eqref{eq:new_reparam_opt_cond} or \eqref{eq:opt_cond_auxi}.
If $f$ is self-concordant, and $x^0$ and $s\in (0,1)$ are chosen such that $\norms{x^0 - x^{\ast}_s}_{x^0} \leq \gamma < 1$, then $\norms{x^0 - x^{\ast}_s}_{x^{\ast}_s} \leq \frac{\gamma}{1-\gamma}$.
\item[$\mathrm{(e)}$]
If $g$ is $\mu_g$-strongly convex, and $x^{\ast}_{\tau}$ is a solution of \eqref{eq:new_reparam_opt_cond}, then
\myeq{eq:lm11_bound_fx0_b3}{
\Vert x^0 - x^{\ast}_{\tau}\Vert_2 \leq \tfrac{\tau}{\mu_g}\Vert\nabla{f}(x^0) + \xi^0\Vert_2.
}
\end{itemize}
\end{lemma}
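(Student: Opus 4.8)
The plan is to handle all five items by a single device: express each parametric optimality condition (\eqref{eq:new_reparam_opt_cond} or \eqref{eq:opt_cond_auxi}) explicitly as ``$\eta\in\partial{g}(\cdot)$'' with $\eta$ the corresponding residual, pair two such conditions via the monotonicity of $\partial{g}$, and then control the resulting inner product involving $\nabla f$ by whichever curvature property of $f$ is available in the current case: $\mu_f$-strong monotonicity of $\nabla f$ for (a) and (e), and the standard self-concordant estimate $\iprods{\nabla f(u)-\nabla f(v), u-v}\geq \frac{\norms{u-v}_w^2}{1+\norms{u-v}_w}$ (valid with $w=u$ or $w=v$) for (b)--(d). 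A single Cauchy--Schwarz inequality in the appropriate norm closes each estimate.

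Concretely, for (a) I would first write the condition at $x^{\ast}_{\tau}$ as $(1-\tau)\xi^0 - \tau\nabla f(x^{\ast}_{\tau})\in\partial{g}(x^{\ast}_{\tau})$ and pair it with $\xi^0\in\partial{g}(x^0)$; monotonicity of $\partial{g}$ gives $\iprods{\nabla f(x^{\ast}_{\tau})+\xi^0,\, x^{\ast}_{\tau}-x^0}\leq 0$, and inserting $\pm\nabla f(x^0)$ together with $\mu_f$-strong monotonicity yields $\mu_f\norms{x^{\ast}_{\tau}-x^0}_2^2 \leq -\iprods{\nabla f(x^0)+\xi^0,\, x^{\ast}_{\tau}-x^0}$, which is the first bound after Cauchy--Schwarz. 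For the second bound I pair the conditions at $\tau$ and $\hat{\tau}$; the only delicate point is to split the difference of residuals as $(\hat{\tau}-\tau)(\nabla f(x^{\ast}_{\tau})+\xi^0) - \hat{\tau}(\nabla f(x^{\ast}_{\tau})-\nabla f(x^{\ast}_{\hat{\tau}}))$, so that the gradient in the ``constant'' vector sits at $x^{\ast}_{\tau}$ and the monotonicity term carries the factor $\hat{\tau}$; this is exactly what produces the stated factor $\frac{\abs{\tau-\hat{\tau}}}{\hat{\tau}\mu_f}$. The last inequality then follows from the identity $\tau(\nabla f(x^{\ast}_{\tau})+\xi^0)=\xi^0-\eta_{\tau}$ with $\eta_{\tau}\in\partial{g}(x^{\ast}_{\tau})$ and the bound $\norms{\eta_{\tau}}_2\leq L_g$ from Lemma~\ref{le:bound_subgrad}.

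Parts (b) and (c) repeat this argument almost verbatim, replacing the strong-monotonicity inequality by the self-concordant one and the $\ell_2$ Cauchy--Schwarz by its local-norm counterpart. In (b) the passage to the bound $\frac{2L_g}{\tau\hat{\tau}}$ again uses $\tau(\nabla f(x^{\ast}_{\tau})+\xi^0)=\xi^0-\eta_{\tau}$ together with $\norms{\xi^0}^{\ast}_{x^{\ast}_{\tau}}\leq L_g$ and $\norms{\eta_{\tau}}^{\ast}_{x^{\ast}_{\tau}}\leq L_g$ (Lemma~\ref{le:bound_subgrad}); in (c) no $L_g$-reduction is needed---one only uses that $x^{\ast}_1=x^0$ solves \eqref{eq:opt_cond_auxi} at $t=1$, as already recorded in the excerpt, and pairs $x^{\ast}_t$ with $x^{\ast}_1$, respectively $x^{\ast}_t$ with $x^{\ast}_{\hat{t}}$, the residual difference being $-(\nabla f(x^{\ast}_t)-\nabla f(\cdot)) + (t-\cdot)(\nabla f(x^0)+\xi^0)$. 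Part (d) is the classical self-concordant Hessian comparison: $\norms{x^0-x^{\ast}_s}_{x^0}=\gamma<1$ gives $\norms{u}_{x^{\ast}_s}\leq (1-\gamma)^{-1}\norms{u}_{x^0}$ for all $u$, and taking $u=x^0-x^{\ast}_s$ yields the claim since $t\mapsto \frac{t}{1-t}$ is increasing on $[0,1)$. Part (e) is the argument of (a) but now using $\mu_g$-strong monotonicity of $\partial{g}$ on the left and plain monotonicity of $\nabla f$ on the right, with the cross term $\iprods{\nabla f(x^{\ast}_{\tau})-\nabla f(x^0),\, x^0-x^{\ast}_{\tau}}\leq 0$ simply discarded, leaving $\frac{\mu_g}{\tau}\norms{x^0-x^{\ast}_{\tau}}_2^2\leq \iprods{\nabla f(x^0)+\xi^0,\, x^0-x^{\ast}_{\tau}}$.

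The only real obstacle is the bookkeeping in the two-parameter bounds of (a)--(c): one must split the difference of residuals so that $\nabla f$ is evaluated precisely at the point named in the claimed inequality and so that the correct parameter ($\tau$ or $\hat{\tau}$, $t$ or $\hat{t}$) multiplies the monotonicity term, and one must remember that the sign of $\tau-\hat{\tau}$ (resp.\ $t-\hat{t}$) is lost at the Cauchy--Schwarz step, which is why absolute values appear in the statements. Everything else reduces to standard convex-analysis inequalities and the self-concordant estimates collected in Appendix~\ref{sec:apdx:proofs}.
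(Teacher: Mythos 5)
Your proposal is correct and follows essentially the same route as the paper: pair the two parametric optimality conditions via monotonicity of $\partial g$, rearrange the residual difference so that the factor $\hat\tau$ (resp.\ $\tau_0$) multiplies the $\nabla f$-difference and the parameter gap multiplies a fixed vector, lower-bound the left side by $\mu_f$-strong monotonicity or the self-concordant inequality, and close with Cauchy--Schwarz in the relevant norm; part (d) is the standard Hessian comparison and part (e) discards the monotone $\nabla f$-term. Your decomposition for (a)--(c), including the split $(\hat\tau-\tau)(\nabla f(x^*_\tau)+\xi^0)-\hat\tau(\nabla f(x^*_\tau)-\nabla f(x^*_{\hat\tau}))$ and the identity $\tau(\nabla f(x^*_\tau)+\xi^0)=\xi^0-\eta_\tau$, matches the paper's proof step for step, and you helpfully spell out the computation for (c), which the paper merely asserts is analogous to (b).
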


%%% The proof of Lemma 11.
\begin{proof}
(a)~From \eqref{eq:new_reparam_opt_cond}, we have $-\tau (\nabla{f}(x^{\ast}_{\tau}) + \xi^0) \in\partial{g}(x^{\ast}_{\tau}) - \xi^0$.
Using the monotonicity of $\partial{g}$ and $\xi^0\in\partial{g}(x^0)$, we have $\iprods{\nabla{f}(x^{\ast}_{\tau}) +   \xi^0, x^{\ast}_{\tau} - x^0} \leq 0$, which implies
\myeqn{
\iprods{\nabla{f}(x^{\ast}_{\tau}) - \nabla{f}(x^0), x^{\ast}_{\tau} - x^0} \leq \iprods{\nabla{f}(x^0) + \xi^0, x^0 - x^{\ast}_{\tau}}. 
}
If $f$ is $\mu_f$-strongly convex, then  using the Cauchy-Schwarz inequality, the last inequality leads to
\myeqn{
\mu_f\norms{x^0 - x^{\ast}_{\tau}}_2^2 \leq  \iprods{\nabla{f}(x^{\ast}_{\tau}) - \nabla{f}(x^0), x^{\ast}_{\tau} - x^0}  \leq \norms{\nabla{f}(x^0) + \xi^0}_2\norms{x^0 - x^{\ast}_{\tau}}_2, 
}
which can be simplified as $\norms{x^0 - x^{\ast}_{\tau}}_2 \leq \tfrac{\norms{\nabla{f}(x^0) + \xi^0}_2}{\mu_f}$.
This is exactly the first estimate of \eqref{eq:lm11_bound_fx0_b1}.

Using \eqref{eq:new_reparam_opt_cond} again with $\hat{\tau}$ we have $-\hat{\tau}( \nabla{f}(x^{\ast}_{\hat{\tau}}) +  \xi^0) \in\partial{g}(x^{\ast}_{\hat{\tau}}) - \xi^0$.
Combining this expression and $-\tau( \nabla{f}(x^{\ast}_{\tau}) + \xi^0) \in\partial{g}(x^{\ast}_{\tau}) - \xi^0$, and using the monotonicity of $\partial{g}$, we have
\myeqn{
\iprods{\tau\nabla{f}(x^{\ast}_{\tau}) - \hat{\tau}\nabla{f}(x^{\ast}_{\hat{\tau}}) + (\tau-\hat{\tau})\xi^0, x^{\ast}_{\tau} - x^{\ast}_{\hat{\tau}} } \leq 0.
}
Rearranging this inequality, we obtain
\myeqn{
\hat{\tau}\iprods{\nabla{f}(x^{\ast}_{\tau}) - \nabla{f}(x^{\ast}_{\hat{\tau}}), x^{\ast}_{\tau} - x^{\ast}_{\hat{\tau}} } \leq (\hat{\tau} -  \tau)\iprods{\nabla{f}(x^{\ast}_{\tau}) + \xi^0, x^{\ast}_{\tau} - x^{\ast}_{\hat{\tau}}}.
}
If $f$ is $\mu_f$-strongly convex, then we have $\iprods{\nabla{f}(x^{\ast}_{\tau}) - \nabla{f}(x^{\ast}_{\hat{\tau}}), x^{\ast}_{\tau} - x^{\ast}_{\hat{\tau}} } \geq \mu_f\norms{x^{\ast}_{\tau} - x^{\ast}_{\hat{\tau}} }_2^2$. 
Combining this estimate {with} the last inequality, and then using the Cauchy-Schwarz inequality, we obtain the first inequality of the second line of \eqref{eq:lm11_bound_fx0_b1}.

 Using again \eqref{eq:new_reparam_opt_cond}, we have $-\tau (\nabla{f}(x^{\ast}_{\tau}) + \xi^0) \in\partial{g}(x^{\ast}_{\tau}) - \xi^0$.
Since $g$ is $L_g$-Lipschitz continuous, the last expression leads to $\tau\Vert \nabla{f}(x^{\ast}_{\tau}) +  \xi^0\Vert_2 \leq L_g + \norms{\xi^0}_2$, which implies \eqref{eq:lm11_bound_fx0_b1}.
 
(b)~If $f$ is self-concordant, then we have 
\myeqn{
\tfrac{\norms{x^0 - x^{\ast}_{\tau}}_{x^{\ast}_{\tau}}^2}{1 ~+~ \norms{x^0 - x^{\ast}_{\tau}}_{x^{\ast}_{\tau}}} \leq \iprods{\nabla{f}(x^{\ast}_{\tau} - \nabla{f}(x^0), x^{\ast}_{\tau} - x^0}.
}
With a similar proof as in (a), we have 
\myeqn{\tfrac{\norms{x^0 - x^{\ast}_{\tau}}_{x^{\ast}_{\tau}}}{1 ~+~ \norms{x^0 - x^{\ast}_{\tau}}_{x^{\ast}_{\tau}}} \leq  \norms{\nabla{f}(x^0) + \xi^0}_{x^{\ast}_{\tau}}^{\ast},} 
which is the first line of \eqref{eq:lm11_bound_fx0_b2}.
The second line of \eqref{eq:lm11_bound_fx0_b2} is proved similarly {as that of} \eqref{eq:lm11_bound_fx0_b1} and using the fact that $\norms{\xi^0}_{x^{\ast}_{\tau}}^{\ast} \leq L_g$.

(c)~The proof of \eqref{eq:lm11_bound_fx0_b4} is very similar to the proof of \eqref{eq:lm11_bound_fx0_b2} and we omit it here.

(d)~Note that if $f$ is self-concordant, then we have $\norms{x^0 - x^{\ast}_s}_{x^{\ast}_s} \leq \frac{\norms{x^0 - x^{\ast}_s}_{x^0}}{1 - \norms{x^0 - x^{\ast}_s}_{x^0}}$ due to \cite[Theorem 4.1.5]{Nesterov2004} as long as $\norms{x^0 - x^{\ast}_s}_{x^0} < 1$.
If $\norms{x^0 - x^{\ast}_s}_{x^0} \leq \gamma < 1$, then the last inequality implies 
{that} $\norms{x^0 - x^{\ast}_s}_{x^{\ast}_s} \leq \frac{\gamma}{1-\gamma}$, which proves (d).

(e)~By the choice of $\xi^0$, we have $0 \in \partial{g}(x^0) - \xi^0$. 
From \eqref{eq:new_reparam_opt_cond}, we also have 
\myeqn{-\tau(\nabla{f}(x^{\ast}_{\tau}) - \nabla{f}(x^0)) -\tau(\nabla{f}(x^0) + \xi^0) \in \partial{g}(x^{\ast}_{\tau}) - \xi^0.}
Using the $\mu_g$-strong monotinicity of $\partial{g}$ and the monotonicity of $\nabla{f}$, we have 
\myeqn{
\iprods{\tau(\nabla{f}(x^0) + \xi^0), x^0 - x^{\ast}_{\tau}} \geq \mu_g\norms{x^{\ast}_{\tau} - x^0}_2^2.
}
By the Cauchy-Schwarz inequality, we obtain $\norms{x^0 - x^{\ast}_{\tau}}_2 \leq \frac{\tau}{\mu_g}\norms{\nabla{f}(x^0) + \xi^0}_2$, which is {the desired result in} \eqref{eq:lm11_bound_fx0_b3}.
\end{proof}
%%% End of the proof.

%%% Lemma 11b.
\begin{lemma}\label{le:contraction_sequence}
Let $M > 0$ and $q\in (0, 1)$ be two given constants. 
Let $\set{\tau_k}\subset (0, 1)$ be a given sequence such that $\tau_{k+1} \leq \tau_k + Mq^k\tau_k\tau_{k+1}$ for all $k\geq 0$.
Then
\myeq{eq:tau_estimates}{
\tau_k \leq \frac{\tau_0(1-q)}{1-q -\tau_0M(1-q^k)} ~~~~~\text{and}~~~~1 - \tau_k \geq \frac{(1-\tau_0)(1-q) - \tau_0M + \tau_0Mq^k}{1 - q - \tau_0M(1-q^k)},
}
as long as the denominators are well-defined.
\end{lemma}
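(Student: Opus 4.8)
The plan is to pass to reciprocals and telescope. First I would divide the hypothesis $\tau_{k+1} \leq \tau_k + Mq^k\tau_k\tau_{k+1}$ by the positive number $\tau_k\tau_{k+1}$ (both $\tau_k$ and $\tau_{k+1}$ lie in $(0,1)$, so this is legitimate), which turns the recursion into the additive form $\frac{1}{\tau_k} \leq \frac{1}{\tau_{k+1}} + Mq^k$, i.e. $\frac{1}{\tau_{k+1}} \geq \frac{1}{\tau_k} - Mq^k$ for every $k \geq 0$. This is the only place where the recursion is used; no induction on $k$ beyond this one-step estimate is needed.

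Next I would sum these inequalities over $j = 0, 1, \dots, k-1$. The left-hand side telescopes and the geometric sum $\sum_{j=0}^{k-1} q^j = \frac{1-q^k}{1-q}$ gives $\frac{1}{\tau_k} \geq \frac{1}{\tau_0} - \frac{M(1-q^k)}{1-q} = \frac{(1-q) - \tau_0 M(1-q^k)}{\tau_0(1-q)}$. Under the standing hypothesis that the denominator $1-q-\tau_0 M(1-q^k)$ is positive (which is precisely what ``the denominators are well-defined'' means here), the right-hand side of this display is positive, so both quantities being compared are positive, and applying the decreasing map $t \mapsto 1/t$ reverses the inequality, yielding the first bound $\tau_k \leq \frac{\tau_0(1-q)}{1-q-\tau_0 M(1-q^k)}$.

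For the second bound I would simply subtract the first one from $1$: $1-\tau_k \geq 1 - \frac{\tau_0(1-q)}{1-q-\tau_0 M(1-q^k)} = \frac{(1-q)-\tau_0 M(1-q^k) - \tau_0(1-q)}{1-q-\tau_0 M(1-q^k)} = \frac{(1-\tau_0)(1-q) - \tau_0 M + \tau_0 M q^k}{1-q-\tau_0 M(1-q^k)}$, where in the last step I used $-\tau_0 M(1-q^k) = -\tau_0 M + \tau_0 M q^k$ in the numerator. This is exactly the claimed estimate.

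The argument is essentially mechanical; the one point requiring care is the bookkeeping of the common denominator $1-q-\tau_0 M(1-q^k)$, since both the reciprocal step and the subtraction step are valid only when it is strictly positive — and this is exactly the qualification in the statement. I do not expect any genuine obstacle beyond this sign check.
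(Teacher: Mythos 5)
Your proof is correct and follows essentially the same route as the paper's: the paper sets $s_k := 1/\tau_k$ and derives $s_{k+1} \geq s_k - Mq^k$, then sums, which is exactly your ``pass to reciprocals and telescope'' step. The only cosmetic difference is that the paper speaks of ``induction'' where you speak of ``summing''; the computations and sign considerations are identical.
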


%%% The proof of Lemma 11b.
\begin{proof}
Define $s_k := \frac{1}{\tau_k}$. Then, $\tau_{k+1} \leq \tau_k + Mq^k\tau_k\tau_{k+1}$ is equivalent to $s_{k+1} \geq s_k - Mq^k$.
By induction, we have $s_k \geq s_0 - M\sum_{i=0}^{k-1}q^i = s_0 - \frac{M(1-q^k)}{1-q}$.
Hence, we can show that $\tau_k \leq \frac{\tau_0(1-q)}{1-q -\tau_0M(1-q^k)}$. This estimate leads to
\myeqn{
1 - \tau_k \geq \frac{(1-\tau_0)(1-q) - \tau_0M + \tau_0Mq^k}{1 - q - \tau_0M(1-q^k)},
}
which proves \eqref{eq:tau_estimates}.
\end{proof}
%%% End of the proof.

%%% Lemma 12.
\begin{lemma}\label{le:contraction_of_prox_grad}
Assume that $f$ is $\mu_f$-strongly convex and $L_f$-smooth. {Suppose $0 < m \leq L < +\infty$ are given parameters} such that $\omega := \frac{1}{m}\sqrt{(L-2\mu_f)m + L_f^2} < 1$. 
Let $\set{x^k}$ be the sequence generated by \eqref{eq:homotopy_method} using $H_k\in\Sc^p_{++}$ such that $m\Id \preceq H_k\preceq L\Id$. 
Then
\myeq{eq:lm12_proof_est2}{
\norms{x^{k+1} - x^{\ast}_{\tau_{k+1}}}_2 \leq \omega\norms{x^k - x^{\ast}_{\tau_{k+1}}}_2.
\vspace{1ex}
}
\end{lemma}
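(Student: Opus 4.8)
The plan is to combine the nonexpansiveness of the scaled proximal operator with the fixed-point reformulation \eqref{eq:fixed_point1}. First, I would use the exact version of \eqref{eq:homotopy_method} to write
\[
x^{k+1} = \prox_{\frac{1}{\tau_{k+1}}g}^{H_k}\Big(x^k - H_k^{-1}\big(\nabla{f}(x^k) - (\tfrac{1}{\tau_{k+1}}-1)\xi^0\big)\Big),
\]
and observe that, by \eqref{eq:fixed_point1} applied with $H = H_k$ and $\tau = \tau_{k+1}$, the solution $x^{\ast}_{\tau_{k+1}}$ of \eqref{eq:new_reparam_opt_cond} satisfies the \emph{same} identity with $x^k$ replaced by $x^{\ast}_{\tau_{k+1}}$. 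Subtracting the two identities, the constant terms $(\tfrac{1}{\tau_{k+1}}-1)\xi^0$ cancel, and the nonexpansiveness \eqref{eq:nonexpansiveness} of $\prox_{\frac{1}{\tau_{k+1}}g}^{H_k}$ in the $H_k$-norm gives
\[
\norms{x^{k+1} - x^{\ast}_{\tau_{k+1}}}_{H_k} \leq \norms{(x^k - x^{\ast}_{\tau_{k+1}}) - H_k^{-1}\big(\nabla{f}(x^k) - \nabla{f}(x^{\ast}_{\tau_{k+1}})\big)}_{H_k}.
\]

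Next, I would square the right-hand side and estimate it term by term. Writing $d_k := x^k - x^{\ast}_{\tau_{k+1}}$ and $e_k := \nabla{f}(x^k) - \nabla{f}(x^{\ast}_{\tau_{k+1}})$, one has
\[
\norms{d_k - H_k^{-1}e_k}_{H_k}^2 = \iprods{H_kd_k, d_k} - 2\iprods{e_k, d_k} + \iprods{H_k^{-1}e_k, e_k}.
\]
From $H_k \preceq L\Id$ we get $\iprods{H_kd_k, d_k} \leq L\norms{d_k}_2^2$; from $H_k \succeq m\Id$ together with the $L_f$-smoothness of $f$ we get $\iprods{H_k^{-1}e_k, e_k} \leq \tfrac{1}{m}\norms{e_k}_2^2 \leq \tfrac{L_f^2}{m}\norms{d_k}_2^2$; and from the $\mu_f$-strong convexity of $f$ (i.e. strong monotonicity of $\nabla f$) we get $\iprods{e_k, d_k} \geq \mu_f\norms{d_k}_2^2$. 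Substituting these bounds yields
\[
\norms{d_k - H_k^{-1}e_k}_{H_k}^2 \leq \Big(L - 2\mu_f + \tfrac{L_f^2}{m}\Big)\norms{d_k}_2^2 = \frac{(L - 2\mu_f)m + L_f^2}{m}\norms{d_k}_2^2.
\]

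Finally, using $H_k \succeq m\Id$ once more on the left-hand side, namely $\norms{x^{k+1} - x^{\ast}_{\tau_{k+1}}}_{H_k}^2 \geq m\norms{x^{k+1} - x^{\ast}_{\tau_{k+1}}}_2^2$, I would combine the last two displays to obtain $m\norms{x^{k+1} - x^{\ast}_{\tau_{k+1}}}_2^2 \leq \tfrac{(L - 2\mu_f)m + L_f^2}{m}\norms{d_k}_2^2$, which after dividing by $m$ and taking square roots is exactly \eqref{eq:lm12_proof_est2} with $\omega = \tfrac{1}{m}\sqrt{(L-2\mu_f)m + L_f^2}$. The main point that needs care is that the variable metric $H_k$ in the nonexpansiveness estimate must be the very same matrix used in the proximal-gradient step; this is legitimate precisely because \eqref{eq:fixed_point1} characterizes $x^{\ast}_{\tau_{k+1}}$ as a fixed point of the $H$-scaled map for \emph{every} $H \in \Spd^p_{++}$. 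It is also worth noting in passing that the standing hypothesis $\omega < 1$ tacitly forces $(L-2\mu_f)m + L_f^2 \geq 0$, so $\omega$ is a well-defined real number (automatic when $L \geq 2\mu_f$) and no separate argument is needed there.
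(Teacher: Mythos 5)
Your proof is correct and follows essentially the same route as the paper's: both invoke the fixed-point identity \eqref{eq:fixed_point1} with $H=H_k$, $\tau=\tau_{k+1}$, apply the nonexpansiveness \eqref{eq:nonexpansiveness} so the $\xi^0$-terms cancel, and then expand the squared $H_k$-norm and bound the three terms using $H_k \preceq L\Id$, the strong monotonicity of $\nabla f$, and $H_k \succeq m\Id$ together with $L_f$-smoothness. (As a minor aside, the nonnegativity of $(L-2\mu_f)m+L_f^2$ is actually automatic from $m\le L$ and $\mu_f\le L_f$, since $(L-2\mu_f)m+L_f^2 \ge m^2-2\mu_f m+\mu_f^2 = (m-\mu_f)^2\ge 0$, not only when $L\ge 2\mu_f$.)
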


%%% Begin of the proof of Lemma 12.
\begin{proof}
Using $\nabla{f}_{\tau_{k+1}}(\cdot) := \nabla{f}(\cdot) - (\frac{1}{\tau_{k+1}}-1)\xi^0$, $H = H_k$, and $\tau = \tau_{k+1}$ in \eqref{eq:fixed_point1}, we get $x^{\ast}_{\tau_{k+1}} = \prox_{\frac{1}{\tau_{k+1}}g}^{H_k}\big(x^{\ast}_{\tau_{k+1}} - H_k^{-1}\nabla{f_{\tau_{k+1}}}(x^{\ast}_{\tau_{k+1}})\big)$.
Combining this expression and \eqref{eq:homotopy_method}, then using the non-expansiveness of $\prox_{\frac{1}{\tau_{k+1}}g}^{H_k}$ in \eqref{eq:nonexpansiveness}, we can derive
{that}
\vspace{-0.75ex}
\begin{eqnarray}\label{eq:lm12_proof_est1}
\norms{x^{k+1} \!-\! x^{\ast}_{\tau_{k+1}}}_{H_k}^2 
& = & \big\Vert \prox_{\frac{1}{\tau_{k+1}}g}^{H_k}\left(x^k \!-\! H_k^{-1}\nabla{f_{\tau_{k\!+\!1}}}(x^k)\right)  - \prox^{H_k}_{\frac{1}{\tau_{k+1}}g}\big( x^{\ast}_{\tau_{k+1}} {\!\!\!}- H_k^{-1}\nabla{f_{\tau_{k\!+\!1}}}(x^{\ast}_{\tau_{k+1}}) \big) \big\Vert_{H_k}^2 \nonumber\\
&\leq& \big\Vert x^k - x^{\ast}_{\tau_{k+1}} - H_k^{-1}\big(\nabla{f_{\tau_{k+1}}}(x^k) -  \nabla{f_{\tau_{k+1}}}(x^{\ast}_{\tau_{k+1}})\big) \big\Vert_{H_k}^2 \nonumber\\
&= &\norms{x^k - x^{\ast}_{\tau_{k+1}}}_{H_k}^2 - 2\big\langle{x^k - x^{\ast}_{\tau_{k+1}}, \nabla{f_{\tau_{k+1}}}(x^k) -  \nabla{f_{\tau_{k+1}}}(x^{\ast}_{\tau_{k+1}})}\big\rangle \nonumber\\
& &+~ \big(\norms{\nabla{f_{\tau_{k+1}}}(x^k) -  \nabla{f_{\tau_{k+1}}}(x^{\ast}_{\tau_{k+1}})}_{H_k}^{\ast}\big)^2 \nonumber\\
&=& \norms{x^k - x^{\ast}_{\tau_{k+1}}}_{H_k}^2 - 2\iprods{x^k - x^{\ast}_{\tau_{k+1}}, \nabla{f}(x^k) -  \nabla{f}(x^{\ast}_{\tau_{k+1}})} \nonumber 
\\
&& +~ \big(\norms{\nabla{f}(x^k) -  \nabla{f}(x^{\ast}_{\tau_{k+1}})}_{H_k}^{\ast}\big)^2.
\vspace{-0.75ex}
\end{eqnarray}
Next, {from the fact that}  $m\Id \preceq H_k \preceq L\Id$ and the strong convexity and smoothness of $f$, we have
\myeqn{
\begin{array}{ll}
&\norms{\nabla{f}(x^k) -  \nabla{f}(x^{\ast}_{\tau_{k+1}})}_{H_k}^{\ast} \leq \frac{1}{\sqrt{m}}\norms{\nabla{f}(x^k) -  \nabla{f}(x^{\ast}_{\tau_{k+1}})}_2 
\leq \frac{L_f}{\sqrt{m}}\norms{x^k - x^{\ast}_{\tau_{k+1}}}_2,\vspace{1.5ex}\\
&\iprods{\nabla{f}(x^k) -  \nabla{f}(x^{\ast}_{\tau_{k+1}}), x^k - x^{\ast}_{\tau_{k+1}}} \geq \mu_f\norms{x^k - x^{\ast}_{\tau_{k+1}}}_2^2.
\end{array}
}
Substituting these estimates into \eqref{eq:lm12_proof_est1}, we {get}
\myeqn{
\norms{x^{k+1} - x^{\ast}_{\tau_{k+1}}}_{H_k}^2 \leq \norms{x^k - x^{\ast}_{\tau_{k+1}}}_{H_k}^2 - 2\mu_f\norms{x^k - x^{\ast}_{\tau_{k+1}}}_2^2 + \frac{L_f^2}{m}\norms{x^k - x^{\ast}_{\tau_{k+1}}}_2^2.
}
Using again $m\Id \preceq  H_k \preceq L\Id$, the last inequality leads to
\myeqn{
\begin{array}{ll}
m\norms{x^{k+1}  - x^{\ast}_{\tau_{k+1}}}_2^2 &\leq \norms{x^{k+1}  -  x^{\ast}_{\tau_{k+1}}}_{H_k}^2 \vspace{1ex}\\
& \leq L\norms{x^k - x^{\ast}_{\tau_{k+1}}}_2^2 - 2\mu_f\norms{x^k - x^{\ast}_{\tau_{k+1}}}_2^2 + \frac{L_f^2}{m}\norms{x^k - x^{\ast}_{\tau_{k+1}}}_2^2 \vspace{1.5ex}\\
&\leq \big(L - 2\mu_f + \frac{L_f^2}{m}\big)\norms{x^k - x^{\ast}_{\tau_{k+1}}}_2^2.
\end{array}
}
This estimate can be simplified as in \eqref{eq:lm12_proof_est2} with $\omega := \frac{1}{m}\big((L-2\mu_f)m + L_f^2\big)^{1/2} < 1$.
\end{proof}
%% End of the proof.

%%% Lemma 12.
\begin{lemma}\label{le:contraction_of_prox_nt}
Assume that $f$ is self-concordant.
Let $x^{k+1}$ be  generated by the inexact scheme \eqref{eq:homotopy_method2} up to a given accuracy $\delta_k \geq 0$.
Let  $\lambda_{k+1} := \Vert x^{k+1} -  x^{\ast}_{\tau_{k+1}}\Vert_{x^{\ast}_{\tau_{k+1}}}$ and $\hat{\lambda}_k := \Vert x^{k} -  x^{\ast}_{\tau_{k+1}}\Vert_{x^{\ast}_{\tau_{k+1}}}$ be defined by \eqref{eq:local_distances22}.
If $1 - 4\hat{\lambda}_k + 2\hat{\lambda}_k^2 > 0$, then we have
\myeq{eq:contraction_of_prox_nt}{
\lambda_{k+1} \leq \left(\frac{3 - 2\hat{\lambda}_k}{1 - 4\hat{\lambda}_k + 2\hat{\lambda}_k^2}\right) \hat{\lambda}_k^2 + \frac{\delta_k}{1 - \hat{\lambda}_k}.
}
\end{lemma}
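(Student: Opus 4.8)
The plan is to compare the inexact iterate $x^{k+1}$ with the exact minimizer $\bar{x}^{k+1}$ of the subproblem \eqref{eq:cvx_subprob_k}, and to estimate the two resulting contributions to $\lambda_{k+1} = \Vert x^{k+1} - x^{\ast}_{\tau_{k+1}}\Vert_{x^{\ast}_{\tau_{k+1}}}$ separately by means of the self-concordance of $f$. Write $x^{\ast} := x^{\ast}_{\tau_{k+1}}$ and $G := \nabla^2{f}(x^k)$. First note that the hypothesis $1 - 4\hat{\lambda}_k + 2\hat{\lambda}_k^2 > 0$ forces $\hat{\lambda}_k < 1-\tfrac{1}{\sqrt{2}} < 1$, so $x^{\ast}$ lies in the open unit Dikin ellipsoid around $x^k$ and vice versa; the standard Hessian comparison for standard self-concordant functions (see Appendix~\ref{sec:apdx:proofs} and \cite{Nesterov2004}) then gives $(1-\hat{\lambda}_k)^2\nabla^2{f}(x^{\ast}) \preceq G \preceq (1-\hat{\lambda}_k)^{-2}\nabla^2{f}(x^{\ast})$, hence $\Vert v\Vert_{x^{\ast}} \le (1-\hat{\lambda}_k)^{-1}\Vert v\Vert_{x^k}$ for all $v$ and $\Vert x^k - x^{\ast}\Vert_{x^k} \le \hat{\lambda}_k/(1-\hat{\lambda}_k)$. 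Since \eqref{eq:homotopy_method2} is solved to accuracy $\delta_k$, Fact~\ref{fact:fact_k} yields $\Vert x^{k+1} - \bar{x}^{k+1}\Vert_{x^k} \le \delta_k$, so by the triangle inequality and the first Hessian comparison,
\begin{equation*}
\lambda_{k+1} \le \Vert \bar{x}^{k+1} - x^{\ast}\Vert_{x^{\ast}} + \Vert x^{k+1} - \bar{x}^{k+1}\Vert_{x^{\ast}} \le \Vert \bar{x}^{k+1} - x^{\ast}\Vert_{x^{\ast}} + \frac{\delta_k}{1-\hat{\lambda}_k}.
\end{equation*}
Thus it suffices to show $\Vert \bar{x}^{k+1} - x^{\ast}\Vert_{x^{\ast}} \le \big((3-2\hat{\lambda}_k)/(1-4\hat{\lambda}_k+2\hat{\lambda}_k^2)\big)\hat{\lambda}_k^2$.

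For this ``exact'' term I would use that $\bar{x}^{k+1}$ and $x^{\ast}$ are fixed points of the \emph{same} scaled proximal map built from $G = \nabla^2{f}(x^k)$: by construction $\bar{x}^{k+1} = \prox_{\frac{1}{\tau_{k+1}}g}^{G}\big(x^k - G^{-1}(\nabla{f}(x^k) - (\tfrac{1}{\tau_{k+1}}-1)\xi^0)\big)$, while the parametric optimality condition \eqref{eq:new_reparam_opt_cond}, rewritten as the fixed point \eqref{eq:fixed_point1} with $H = G$, gives $x^{\ast} = \prox_{\frac{1}{\tau_{k+1}}g}^{G}\big(x^{\ast} - G^{-1}(\nabla{f}(x^{\ast}) - (\tfrac{1}{\tau_{k+1}}-1)\xi^0)\big)$. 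Applying the nonexpansiveness \eqref{eq:nonexpansiveness} of $\prox_{\frac{1}{\tau_{k+1}}g}^{G}$ in $\Vert\cdot\Vert_G = \Vert\cdot\Vert_{x^k}$ and rewriting via the dual local norm gives
\begin{equation*}
\Vert \bar{x}^{k+1} - x^{\ast}\Vert_{x^k} \le \big\Vert \nabla{f}(x^k) - \nabla{f}(x^{\ast}) - G(x^k - x^{\ast})\big\Vert_{x^k}^{\ast}.
\end{equation*}
To control the right-hand side I would split it as $\big[\nabla{f}(x^k) - \nabla{f}(x^{\ast}) - \nabla^2{f}(x^{\ast})(x^k-x^{\ast})\big] + \big[\nabla^2{f}(x^{\ast}) - G\big](x^k - x^{\ast})$, bound the first bracket in $\Vert\cdot\Vert_{x^{\ast}}^{\ast}$ by the standard self-concordant gradient estimate $\hat{\lambda}_k^2/(1-\hat{\lambda}_k)$, bound the second bracket in $\Vert\cdot\Vert_{x^{\ast}}^{\ast}$ by $\big((1-\hat{\lambda}_k)^{-2}-1\big)\hat{\lambda}_k$ using the Hessian comparison, and finally transport both $x^{\ast}$-norm bounds to $x^k$-norm bounds via the $(1-\hat{\lambda}_k)$-factors above. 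A short computation collecting these terms gives $\Vert \bar{x}^{k+1} - x^{\ast}\Vert_{x^k} \le (3-2\hat{\lambda}_k)\hat{\lambda}_k^2/(1-\hat{\lambda}_k)^3$, and a further $(1-\hat{\lambda}_k)^{-1}$ factor gives $\Vert \bar{x}^{k+1} - x^{\ast}\Vert_{x^{\ast}} \le (3-2\hat{\lambda}_k)\hat{\lambda}_k^2/(1-\hat{\lambda}_k)^4$. Since $(1-\hat{\lambda}_k)^4 - (1-4\hat{\lambda}_k+2\hat{\lambda}_k^2) = \hat{\lambda}_k^2(2-\hat{\lambda}_k)^2 \ge 0$, the denominator may be loosened to $1-4\hat{\lambda}_k+2\hat{\lambda}_k^2$; combining with the first display then yields \eqref{eq:contraction_of_prox_nt}.

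The main obstacle will be the bookkeeping in the last step: every self-concordance inequality must be invoked strictly inside the relevant unit Dikin ellipsoid, and each passage between $\Vert\cdot\Vert_{x^k}$ and $\Vert\cdot\Vert_{x^{\ast}}$ (or their duals) contributes a $(1-\hat{\lambda}_k)$ factor, so one has to keep careful track of these to arrive at the stated rational function rather than a cruder one; this is also precisely where the hypothesis $1-4\hat{\lambda}_k+2\hat{\lambda}_k^2>0$ (equivalently $\hat{\lambda}_k<1-\tfrac{1}{\sqrt{2}}$) enters. Everything else is a routine combination of the nonexpansiveness of the scaled proximal operator with the gradient and Hessian comparison lemmas for standard self-concordant functions recalled in Appendix~\ref{sec:apdx:proofs}; one should also record at the outset that the strongly convex quadratic model $\Pc_k$ in \eqref{eq:cvx_subprob_k} has a unique minimizer $\bar{x}^{k+1}\in\dom{f}$, so all of the above manipulations are legitimate.
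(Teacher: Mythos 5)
Your proposal is correct, and it reaches the stated bound by a genuinely different decomposition from the paper's. The paper fixes the metric at the \emph{solution}, i.e.\ it uses nonexpansiveness of $\prox_{\frac{1}{\tau_{k+1}}g}^{\nabla^2 f(x^{\ast}_{\tau_{k+1}})}$; because $\bar{x}^{k+1}$ is the fixed point of the $\nabla^2 f(x^k)$-scaled prox, this forces the introduction of a correction term $e_k := \nabla^2 f(x^{\ast}_{\tau_{k+1}})^{-1}\bigl(\nabla^2 f(x^{\ast}_{\tau_{k+1}}) - \nabla^2 f(x^k)\bigr)(\bar{x}^{k+1}-x^k)$ inside the prox argument, $e_k$ depends on the unknown $\bar{x}^{k+1}$, and the paper closes the loop by solving a one-sided linear inequality in $\Vert \bar{x}^{k+1}-x^{\ast}_{\tau_{k+1}}\Vert_{x^{\ast}_{\tau_{k+1}}}$, which is exactly how the denominator $1-4\hat{\lambda}_k+2\hat{\lambda}_k^2$ arises. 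You instead keep the metric at the \emph{iterate}, i.e.\ nonexpansiveness of $\prox^{\nabla^2 f(x^k)}_{\frac{1}{\tau_{k+1}}g}$, for which both $\bar{x}^{k+1}$ and $x^{\ast}_{\tau_{k+1}}$ are fixed points of the identical map; this eliminates the error term and the implicit inequality, at the cost of two further factors of $(1-\hat{\lambda}_k)^{-1}$ from transporting $\Vert\cdot\Vert_{x^k}$-bounds to $\Vert\cdot\Vert_{x^{\ast}_{\tau_{k+1}}}$-bounds. Your intermediate bound $\Vert \bar{x}^{k+1}-x^{\ast}_{\tau_{k+1}}\Vert_{x^{\ast}_{\tau_{k+1}}}\leq (3-2\hat{\lambda}_k)\hat{\lambda}_k^2/(1-\hat{\lambda}_k)^4$ is in fact slightly \emph{sharper} than the paper's $(3-2\hat{\lambda}_k)\hat{\lambda}_k^2/(1-4\hat{\lambda}_k+2\hat{\lambda}_k^2)$, and your identity $(1-\hat{\lambda}_k)^4 - (1-4\hat{\lambda}_k+2\hat{\lambda}_k^2) = \hat{\lambda}_k^2(2-\hat{\lambda}_k)^2 \geq 0$ is a clean way to recover the stated form. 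One tiny caveat: the hypothesis $1-4\hat{\lambda}_k+2\hat{\lambda}_k^2>0$ by itself also allows $\hat{\lambda}_k>1+\tfrac{1}{\sqrt{2}}$, so strictly one should add $\hat{\lambda}_k<1$ to be able to invoke the self-concordance estimates (the paper has the same implicit assumption); in context $\hat{\lambda}_k$ is small, so this is immaterial.
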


%%% Begin of the proof of Lemma 13.
\begin{proof}
Note that $\nabla{f}_{\tau_{k+1}}(\cdot) := \nabla{f}(\cdot) - (\tfrac{1}{\tau_{k+1}} - 1)\xi^0$.
Using \eqref{eq:fixed_point1} with $H \equiv H^{\ast}_{k} :=  \nabla^2f(x^{\ast}_{\tau_{k+1}})$ and $\tau = \tau_{k+1}$, we have 
\myeq{eq:lm13_est1}{
x^{\ast}_{\tau_{k+1}} = \prox_{\frac{1}{\tau_{k+1}}g}^{H_{k}^{\ast}}\Big( x^{\ast}_{\tau_{k+1}} - {H^{\ast}_{k}}^{-1} \nabla{f_{\tau_{k+1}}}(x^{\ast}_{\tau_{k+1}}) \Big).
}
Define $H_{k} := \nabla^2{f}(x^k)$ and $e_k := {H_{k}^{*}}^{-1}\left(H_{k}^{*} - H_{k}\right)(\bar{x}^{k+1} - x^k)$.
Then, we can rewrite \eqref{eq:cvx_subprob_k} as
\myeq{eq:lm13_est2}{
\bar{x}^{k+1} := \prox_{\frac{1}{\tau_{k+1}}g}^{H^{\ast}_{k}}\left(x^k - {H^{\ast}_{k}}^{-1}\nabla{f_{\tau_{k+1}}}(x^k) + e_k\right).
}
Using \eqref{eq:nonexpansiveness}, we can derive from \eqref{eq:lm13_est1} and \eqref{eq:lm13_est2} that
\vspace{-0.75ex}
\begin{align}\label{eq:lm13_est3}
\Vert \bar{x}^{k+1} - x^{\ast}_{\tau_{k+1}} \Vert_{x^{\ast}_{\tau_{k+1}}} {\!\!\!\!\!}&= \Big\Vert \prox_{\frac{1}{\tau_{k+1}}g}^{H^{\ast}_{k}}\left(x^k - {H^{\ast}_{k}}^{-1}\nabla{f_{\tau_{k+1}}}(x^k) + e_k\right) \nonumber\\
&~~~~~ - \prox_{\frac{1}{\tau_{k+1}}g}^{H_{k}^{\ast}}\big( x^{\ast}_{\tau_{k+1}} - {H^{\ast}_{k}}^{-1} \nabla{f_{\tau_{k+1}}}(x^{\ast}_{\tau_{k+1}}) \big) \Big\Vert_{x^{\ast}_{\tau_{k+1}}} \nonumber\\
&\overset{\tiny\eqref{eq:nonexpansiveness}}{\leq} \big\Vert {H^{\ast}_{k}}^{-1} \big( \nabla{f_{\tau_{k+1}}}(x^k) - \nabla{f_{\tau_{k+1}}}(x^{\ast}_{\tau_{k+1}}) - H^{\ast}_{k}(x^k - x^{\ast}_{\tau_{k+1}})\big)  - e_k \big\Vert_{x^{\ast}_{\tau_{k+1}}} \nonumber\\
&\leq  \big\Vert  \nabla{f_{\tau_{k+1}}}(x^k) - \nabla{f_{\tau_{k+1}}}(x^{\ast}_{\tau_{k+1}}) - H^{\ast}_{k}(x^k - x^{\ast}_{\tau_{k+1}})\big\Vert_{x^{\ast}_{\tau_{k+1}}}^{\ast} {\!\!\!} + \Vert e_k \Vert_{x^{\ast}_{\tau_{k+1}}}.
\vspace{-0.75ex}
\end{align}
First, we estimate the following term of \eqref{eq:lm13_est3} using definition of $\nabla{f}_{\tau_{k+1}}$ and $H_k^{\ast}$ as
\myeqn{\begin{array}{ll}
A_k &:=  \big\Vert  \nabla{f_{\tau_{k+1}}}(x^k) - \nabla{f_{\tau_{k+1}}}(x^{\ast}_{\tau_{k+1}}) - H^{\ast}_{k}(x^k - x^{\ast}_{\tau_{k+1}}) \big\Vert_{x^{\ast}_{\tau_{k+1}}}^{\ast} \vspace{1ex}\\
& = \big\Vert  \nabla{f}(x^k) - \nabla{f}(x^{\ast}_{\tau_{k+1}}) -  \nabla^2f(x^{\ast}_{\tau_{k+1}})(x^k - x^{\ast}_{\tau_{k+1}})   \big\Vert_{x^{\ast}_{\tau_{k+1}}}^{\ast}.
\end{array}}
By the self-concordance of $f$, with the same proof as in \cite[Theorem 4.1.14]{Nesterov2004}, we can derive {that}
\myeqn{
\begin{array}{ll}
A_k & \leq \frac{\Vert x^k - x^{\ast}_{\tau_{k+1}} \Vert_{x^{\ast}_{\tau_{k+1}}}^2}{1 {~}-{~} \Vert x^k - x^{\ast}_{\tau_{k+1}}\Vert_{x^{\ast}_{\tau_{k+1}}}}.
\end{array}
}
Next, using \cite[Corollary 4.1.4]{Nesterov2004}, we estimate $\Vert e_k\Vert_{x^{\ast}_{\tau_{k+1}}}$ as follows:
\myeqn{
\begin{array}{ll}
\Vert e_k \Vert_{x^{\ast}_{\tau_{k+1}}} &=  \big\Vert \big(H_{k}^{*} - H_{k}\big)(\bar{x}^{k+1} - x^k)\big\Vert_{x^{\ast}_{\tau_{k+1}}}^{\ast} \vspace{1ex}\\
& = \big\Vert \big(\nabla^2{f}(x^{\ast}_{\tau_{k+1}}) - \nabla^2{f}(x^{k})\big)(\bar{x}^{k+1} - x^k) \big\Vert_{x^{\ast}_{\tau_{k+1}}}^{\ast} \vspace{1ex}\\
&\leq \Bigg[\frac{2 \Vert x^k -~ x^{\ast}_{\tau_{k+1}} \Vert_{x^{\ast}_{\tau_{k+1}}} - ~ \Vert x^k -~ x^{\ast}_{\tau_{k+1}} \Vert_{x^{\ast}_{\tau_{k+1}}}^2 }{\big(1 {~} - {~} \Vert x^k -~ x^{\ast}_{\tau_{k+1}} \Vert_{x^{\ast}_{\tau_{k+1}}} \big)^2} \Bigg] \cdot \Vert \bar{x}^{k+1} -~ x^k \Vert_{x^{\ast}_{\tau_{k+1}}},
\end{array}
}
provided that $\Vert x^k - x^{\ast}_{\tau_{k+1}} \Vert_{x^{\ast}_{\tau_{k+1}}} {\!\!\!}< 1$.
Combining these two estimates, we can derive from \eqref{eq:lm13_est3} that
\myeqn{{\!\!\!}
\Vert \bar{x}^{k+1} {\!\!}- x^{\ast}_{\tau_{k+1}} \Vert_{x^{\ast}_{\tau_{k+1}}} \leq \tfrac{\Vert x^k - x^{\ast}_{\tau_{k+1}} \Vert_{x^{\ast}_{\tau_{k+1}}}^2}{1 - \Vert x^k - x^{\ast}_{\tau_{k+1}}\Vert_{x^{\ast}_{\tau_{k+1}}}} + \tfrac{\big[ 2 \Vert x^k - x^{\ast}_{\tau_{k+1}} \Vert_{x^{\ast}_{\tau_{k+1}}} -~ \Vert x^k - x^{\ast}_{\tau_{k+1}} \Vert_{x^{\ast}_{\tau_{k+1}}}^2 \big] }{\big(1 - \Vert x^k - x^{\ast}_{\tau_{k+1}} \Vert_{x^{\ast}_{\tau_{k+1}}}\big)^2} \cdot \Vert \bar{x}^{k+1} - x^k \Vert_{x^{\ast}_{\tau_{k+1}}}.
{\!\!\!}}
Note that $\Vert \bar{x}^{k+1} - x^k \Vert_{x^{\ast}_{\tau_{k+1}}} \leq \Vert \bar{x}^{k+1} - x^{\ast}_{\tau_{k+1}} \Vert_{x^{\ast}_{\tau_{k+1}}} +~ \Vert x^k - x^{\ast}_{\tau_{k+1}} \Vert_{x^{\ast}_{\tau_{k+1}}}$ by the triangle inequality.
Using this {in} the last inequality, and rearranging the result, we obtain
\myeqn{
\Vert \bar{x}^{k+1} - x^{\ast}_{\tau_{k+1}} \Vert_{x^{\ast}_{\tau_{k+1}}} \leq \Bigg(\tfrac{3 {~}-{~} 2\Vert x^k - x^{\ast}_{\tau_{k+1}} \Vert_{x^{\ast}_{\tau_{k+1}}}}{1 {~} - {~} 4\Vert x^k - x^{\ast}_{\tau_{k+1}} \Vert_{x^{\ast}_{\tau_{k+1}}} {~} + {~~} 2\Vert x^k - x^{\ast}_{\tau_{k+1}} \Vert_{x^{\ast}_{\tau_{k+1}}}^2}\Bigg)\Vert x^k - x^{\ast}_{\tau_{k+1}} \Vert_{x^{\ast}_{\tau_{k+1}}}^2.
}
Moreover, using the self-concordance of $f$ in \cite[Theorem 4.1.5]{Nesterov2004}, we can also derive {that}
\myeqn{
\begin{array}{ll}
\Vert x^{k+1} - x^{\ast}_{\tau_{k+1}} \Vert_{x^{\ast}_{\tau_{k+1}}}  &\leq \Vert \bar{x}^{k+1} - x^{\ast}_{\tau_{k+1}} \Vert_{x^{\ast}_{\tau_{k+1}}} + \Vert x^{k+1} - \bar{x}^{k+1}\Vert_{x^{\ast}_{\tau_{k+1}}} \vspace{1ex}\\
&\leq \Vert \bar{x}^{k+1} - x^{\ast}_{\tau_{k+1}} \Vert_{x^{\ast}_{\tau_{k+1}}} + \tfrac{\Vert x^{k+1} - \bar{x}^{k+1} \Vert_{x^k}}{1 {~}-{~} \Vert x^k -  x^{\ast}_{\tau_{k+1}} \Vert_{x^{\ast}_{\tau_{k+1}}} }.
\end{array}
}
Combining the last two inequalities and using the definition of $\lambda_{k+1}$, $\hat{\lambda}_k$, and $\delta_k$, we  obtain
\myeqn{ 
\lambda_{k+1} \leq \left(\frac{3 - 2\hat{\lambda}_k}{1 - 4\hat{\lambda}_k + 2\hat{\lambda}_k^2}\right) \hat{\lambda}_k^2 + \frac{\delta_k}{1 - \hat{\lambda}_k},
}
which is exactly \eqref{eq:contraction_of_prox_nt}.
Here, the right-hand side is well-defined since $1 - 4\hat{\lambda}_k + 2\hat{\lambda}_k^2 > 0$.
\end{proof}
%%% End of the proof of Lemma 13.

%%% B.1. The proof of Theorem 4.
\subsection{Proof of Theorem~\ref{th:linear_convergence1}: Global linear convergence for smooth and strongly convex function $f$}\label{subsec:linear_convergence1}
From \eqref{eq:lm12_proof_est2} of Lemma~\ref{le:contraction_of_prox_grad}, we can write
\myeqn{ 
\norms{x^{k+1} - x^{\ast}_{\tau_{k+1}}}_2 \leq \omega\norms{x^k - x^{\ast}_{\tau_{k+1}}}_2.
}
Using the estimate \eqref{eq:lm11_bound_fx0_b1} of Lemma~\ref{le:auxi_results} with $\tau = \tau_{k+1}$ and $\hat{\tau} = \tau_k$, we obtain
\myeqn{ 
\norms{x^{\ast}_{\tau_{k+1}} - x^{\ast}_{\tau_k}}_2 \leq \frac{\vert \tau_{k+1} - \tau_k\vert}{\mu_f\tau_k\tau_{k+1}}\left(L_g + \norms{\xi^0}_2\right).
}
Combining these two estimates and  using the triangle inequality, we can derive that
\myeq{eq:th4_proof_est4}{
\norms{x^{k+1}  - x_{\tau_{k +1}}^{\ast}}_2 \leq \omega \norms{x^k - x^{\ast}_{\tau_{k}}}_2 + \omega\left(L_g + \norms{\xi^0}_2\right) \frac{\Delta\tau_k}{\mu_f\tau_k\tau_{k+1}},
} 
where $\omega := \frac{1}{m}\sqrt{(L-2\mu_f)m + L_f^2} < 1$, and $\Delta{\tau_k} := \vert\tau_{k+1} - \tau_k\vert$.
In addition, by using \eqref{eq:lm11_bound_fx0_b1} of Lemma~\ref{le:auxi_results} with $\tau = \tau_0$, we have
\myeq{eq:th4_proof_est5}{
\norms{x^0  - x_{\tau_0}^{\ast}}_2 \leq C := \frac{\norms{\nabla{f}(x^0) + \xi^0}_2}{\mu_f}.
}
Now, let us assume that $\norms{x^k - x^{\ast}_{\tau_{k}}}_2 \leq C \sigma^k$ for some $\sigma \in (\omega, 1]$ and {$C$ is}  given in \eqref{eq:th4_proof_est5}.
In order to get $\norms{x^{k+1} - x^{\ast}_{\tau_{k+1}}}_2 \leq C \sigma^{k+1}$, from \eqref{eq:th4_proof_est4}, we impose the following condition:
\myeqn{
\omega C \sigma^k + \omega(L_g + \norms{\xi^0}_2)\frac{\Delta{\tau}_k}{\mu_f\tau_k\tau_{k+1}} \leq C \sigma^{k+1}.
}
Since $\tau_k < \tau_{k+1}$ and $\sigma \in (\omega, 1]$, this inequality is equivalent to 
\myeqn{
\tau_{k+1} \leq \tau_k + \tfrac{C(\sigma - \omega)\mu_f}{\omega(L_g + \norms{\xi^0}_2)}\sigma^k\tau_k\tau_{k+1}. 
}
{Define} $C_1 := \frac{C(\sigma - \omega)\mu_f}{\omega(L_g + \norms{\xi^0}_2)} = \frac{(\sigma - \omega)\norms{\nabla{f}(x^0) + \xi^0}_2}{\omega(L_g + \norms{\xi^0}_2)} > 0$.
Then, the last inequality leads to
\myeq{eq:cond_on_sigma}{
\tau_{k+1} \leq \tau_k + C_1\sigma^k\tau_k\tau_{k+1}.
}
Using Lemma~\ref{le:contraction_sequence} with $M := C_1$ and $q := \sigma$, we get
%{Now} define $s_k := \frac{1}{\tau_k}$.
%Then, \eqref{eq:cond_on_sigma} is equivalent to $s_{k+1} \geq s_k - C_1\sigma^k$.
%By induction, we have $s_k \geq s_0 - C_1\sum_{i=0}^{k-1}\sigma^i = s_0 - \frac{C_1(1-\sigma^k)}{1-\sigma}$.
%Hence, we can show that $\tau_k \leq \frac{\tau_0(1-\sigma)}{1-\sigma -\tau_0C_1(1-\sigma^k)}$. This estimate is equivalent to
\myeqn{
1 - \tau_k \geq \frac{(1-\tau_0)(1-\sigma) - \tau_0C_1 + \tau_0C_1\sigma^k}{1 - \sigma - \tau_0C_1(1-\sigma^k)}.
}
Let us choose $\tau_0$ and $\sigma$ such that $(1-\tau_0)(1-\sigma) - \tau_0C_1 = 0$.
Using the formula of $C_1$ above, this condition is equivalent to
\myeq{eq:cond_on_param}{
\frac{(1-\tau_0)(1-\sigma)}{\tau_0} = \frac{(\sigma - \omega)\norms{\nabla{f}(x^0) + \xi^0}_2}{\omega(L_g + \norms{\xi^0}_2)}.
}
Let us fix $\tau_0 \in (0, 1)$. 
Then \eqref{eq:cond_on_param} shows that we can choose $\sigma :=  \frac{1-\tau_0 + \tau_0\omega\Gamma}{1-\tau_0 + \tau_0\Gamma} \in (\omega, 1)$ as shown in \eqref{eq:quantities1} to guarantee \eqref{eq:cond_on_param}, where $\Gamma := \frac{\norms{\nabla{f}(x^0) + \xi^0}_2}{\omega(L_g + \norms{\xi^0}_2)}$.

With the choice of $\tau_0$ and $\sigma$, we have $0 \leq 1 - \tau_k \leq \frac{(1-\tau_0)\sigma^k}{\tau_0 + (1-\tau_0)\sigma^k}$ and  $0 < \frac{(1-\tau_0)\sigma^k}{\tau_0 + (1-\tau_0)\sigma^k} < 1$.
If we update $1-\tau_k := \frac{(1-\tau_0)\sigma^k}{\tau_0 + (1-\tau_0)\sigma^k}$ as shown in \eqref{eq:sigma_k}, then the condition \eqref{eq:cond_on_sigma} holds.
Moreover,  $0 \leq 1 - \tau_k \leq \frac{(1-\tau_0)\sigma^k}{\tau_0}$, where the right-hand side $\frac{(1-\tau_0)\sigma^k}{\tau_0}$ converges to zero as $k$ tends to $+\infty$.

Finally, since $x^{\star} = x^{\ast}_1$, by the triangle inequality, we have
\myeqn{
\begin{array}{ll}
\norms{x^k - x^{\star}}_2 &\leq \norms{x^k - x^{\ast}_{\tau_k}}_2 + \norms{x^{\ast}_{\tau_k} - x^{\ast}_1}_2 \leq C \sigma^k + \omega(L_g + \norms{\xi^0}_2)\frac{(1-\tau_k)}{\mu_f\tau_k\tau_{k+1}}\vspace{1ex}\\
&\leq C \sigma^k + \omega(L_g + \norms{\xi^0}_2)\frac{(1-\tau_0)\sigma^k}{\mu_f\tau_0^3} \vspace{1ex}\\
&\leq  \Big[C +  \frac{\omega(L_g + \norms{\xi^0}_2)(1-\tau_0)}{\tau_0^3\mu_f} \Big] \sigma^k,
\end{array}
}
Hence, we obtain $\norms{x^k - x^{\star}}_2 \leq \hat{C}\sigma^k$ with $\hat{C} := C  +  \frac{\omega(L_g + \norms{\xi^0}_2)(1-\tau_0)}{\tau_0^3\mu_f}$.
This is exactly the last conclusion of Theorem \ref{th:linear_convergence1}.
\Eproof
%%% End of the proof.

%%% The proof of Theorem~5b.
\subsection{Proof of Theorem~\ref{th:linear_convergence2b}: Linear convergence under Assumption~\ref{as:A4}}\label{apdx:th:linear_convergence2b}
Let $\lambda_k$ and $\hat{\lambda}_k$ be defined by \eqref{eq:local_distances22} and  let $\Delta_k^{\ast} := \Vert x^{\ast}_{\tau_{k+1}} - x^{\ast}_{\tau_k}\Vert_{x_{\tau_{k+1}}^{\ast}}$ be the local distance between the true solutions $x^{\ast}_{\tau_{k+1}}$ and $x^{\ast}_{\tau_k}$ of \eqref{eq:new_reparam_opt_cond}.
We divide the proof of Theorem~\ref{th:linear_convergence2b}  into the following steps:

%%% Step 1: Upper bound on \lambda_{k+1}.
\textit{Step 1: Upper bound on $\lambda_{k+1}$:}
Recall \eqref{eq:contraction_of_prox_nt} from Lemma~\ref{le:contraction_of_prox_nt} as follows:
\myeq{eq:th100_est1}{
\lambda_{k+1} \leq \left(\frac{3 - 2\hat{\lambda}_k}{1 - 4\hat{\lambda}_k + 2\hat{\lambda}_k^2}\right) \hat{\lambda}_k^2 + \frac{\delta_k}{1 - \hat{\lambda}_k}.
}
By the triangle inequality and \cite[Theorem 4.1.5]{Nesterov2004}, we can also derive {that}
\myeqn{
\hat{\lambda}_k = \Vert x^k - x^{\ast}_{\tau_{k+1}}\Vert_{x^{\ast}_{\tau_{k+1}}} \leq {~} \Delta_k^{\ast} +  \tfrac{\Vert x^k - x^{\ast}_{\tau_k}\Vert_{x^{\ast}_{\tau_k}}}{1 {~} - {~} \Delta^{\ast}_k}  = \Delta^{\ast}_k + \tfrac{\lambda_k}{1 {~} - {~} \Delta_k^{\ast}}.
}
{Now}, consider the function $\psi(t) := \frac{3 - 2t}{1 - 4t + 2t^2}$ on $[0, 0.1145]$. 
It is straightforward to numerically check that {$3\leq \psi(t) \leq 5$}.
Hence, we can overestimate \eqref{eq:th100_est1} as follows:
\myeqn{
\lambda_{k+1} \leq 1.13\delta_k + 5\hat{\lambda}_k^2,~~~\text{for all}~ \hat{\lambda}_k \in [0, 0.1145].
}
Using the fact that $\hat{\lambda}_k \leq \frac{\lambda_k}{1 - \Delta_k^{\ast}} + \Delta_k^{\ast}$ above, we overestimate again this inequality as
\myeqn{
\lambda_{k+1} \leq 1.13\delta_k + 5\left(\frac{\lambda_k}{1 - \Delta_k^{\ast}} + \Delta_k^{\ast}\right)^2,
}
where we require $\frac{\lambda_k}{1 - \Delta_k^{\ast}} + \Delta_k^{\ast} \leq 0.1145$.

Now, if we impose $\lambda_k \leq 0.05$, then $\frac{\lambda_k}{1 - \Delta_k^{\ast}} + \Delta_k^{\ast} \leq 0.1145$ implies that $\Delta_k^{\ast} \leq 0.055$.
Therefore, if we choose $\delta_k := \tfrac{\lambda_k}{113}$, then we can further simplify the above inequality to get
\myeq{eq:main_est}{
\lambda_{k+1} \leq \tfrac{1}{100}\lambda_k + 5\left( \tfrac{10\lambda_k}{9}+ \Delta_k^{\ast}\right)^2,
}
as long as $\lambda_k \leq 0.05$ and $\Delta_k^{\ast} \leq 0.055$.

%%% Step 2: Upper bound on Delta_k.
\textit{Step 2: Upper bound on $\Delta^{\ast}_k$:}
Let us assume that $\lambda_k \leq 0.05\sigma^k$ for some $\sigma \in (0, 1]$.
In order to guarantee $\lambda_{k+1} \leq 0.05\sigma^{k+1}$, using the last inequality \eqref{eq:main_est}, we have to impose the following condition 
\myeqn{
\tfrac{0.05\sigma^k}{100} + 5\left( \frac{0.5\sigma^k}{9}+ \Delta_k^{\ast} \right)^2 \leq 0.05\sigma^{k+1}.
}
{Provided that $\sigma > \frac{25}{81}+0.01 = 0.318642$,
the previous
condition is equivalent to}
\myeq{eq:main_est2}{
\Delta_k^{\ast} \leq  \Delta_k := \sqrt{\frac{(\sigma - 0.01)\sigma^k}{100}} - \frac{\sigma^k}{18} = {C_k} \sqrt{\sigma}^k,
}
{
where $C_k :=  \frac{1}{10}\sqrt{\sigma - 0.01} - \frac{1}{18}\sqrt{\sigma}^k > 0$.}

%%% Step 3: Upper bound on Delta_k.
\textit{Step 3: Update rule of $\tau_k$:}
Using \eqref{eq:lm11_bound_fx0_b2} with $\tau := \tau_{k+1}$ and $\hat{\tau} := \tau_k$, we obtain
\myeq{eq:th100_est2c}{
\frac{\Delta_k^{\ast}}{1 + \Delta_k^{\ast}} \leq \frac{2L_g\abs{\tau_{k+1} - \tau_k}}{\tau_k\tau_{k+1}}.
}
If we update $\tau_k$ as  \eqref{eq:sigma_choice_2b}, then we can see that
\myeq{eq:cond1000a}{
\frac{2L_g(\tau_{k+1} - \tau_k)}{\tau_k\tau_{k+1}} = \frac{C_k\sqrt{\sigma}^k}{1 + C_k\sqrt{\sigma}^k} \overset{\tiny\eqref{eq:th100_est2c}}{\geq} \frac{\Delta^{\ast}_k}{1 + \Delta^{\ast}_k}.
}
This guarantees \eqref{eq:main_est2}.

%%% Step 4: Find condition on $\sigma$ and $tau_0$.
\textit{Step 4: Conditions on $\sigma$ and $\tau_0$:}
{Define} $\bar{C}_0 := \frac{2L_g(1-\tau_0)(1 - \sqrt{\sigma})}{\tau_0}$ and choose $\sigma \in (0.318642, 1]$ such that
\myeq{eq:cond1000}{
\bar{C}_0\sqrt{\sigma}^k  \leq \frac{C_k\sqrt{\sigma}^k}{1 + C_k\sqrt{\sigma}^k}  =  \frac{2L_g(\tau_{k+1} - \tau_k)}{\tau_k\tau_{k+1}}. 
}
{Note that the above} condition holds if
\myeq{eq:cond_of_sigma}{
 \frac{\bar{C}_0}{1 - \bar{C}_0\sqrt{\sigma}^k} \leq \left( \frac{1}{10}\sqrt{\sigma - 0.01} - \frac{1}{18}\sqrt{\sigma}^k \right).
}
{In turn, the  condition \eqref{eq:cond_of_sigma} holds} if we choose $\sigma \in (0.318642, 1)$ such that $\frac{\bar{C}_0}{1-\bar{C}_0} \leq \frac{\sqrt{\sigma - 0.01}}{10} - \frac{1}{18}$.
Using {the explicit expression of} $\bar{C}_0 $, we obtain {that} 
\myeqn{
\frac{2L_g(1-\tau_0)(1- \sqrt{\sigma})}{\tau_0 - 2L_g(1-\tau_0)(1-\sigma)} \leq \frac{\sqrt{\sigma - 0.01}}{10} - \frac{1}{18}, 
}
which is exactly the condition~\eqref{eq:param_cond2}.
It is not hard to show that this { inequality} always has a solution $\sigma \in (0.318642, 1]$.
Moreover,  since $\Delta_k^{\ast} \leq 0.055$, from \eqref{eq:main_est2}, we also require $C_k \leq 0.055$ for all $k\geq 0$. 
{It is easy to check numerically that this condition  is always satisfied for}
$\sigma \in  (0.318642, 1]$.

%%% Step 5: Bound on tau_k
\textit{Step 5: Bound on $\tau_k$:}
Next, using \eqref{eq:cond1000}, the definition of $\bar{C}_0$, and Lemma~\ref{le:contraction_sequence} with $M := \bar{C}_0$ and $q := \sqrt{\sigma}$, similar to the proof of Theorem~\ref{th:linear_convergence1}, we can show that 
\myeq{eq:cond200}{
\tau_k \geq 1 - \frac{(1-\tau_0)\sqrt{\sigma}^k}{\tau_0 + (1-\tau_0)\sqrt{\sigma}^k} \geq 1 - \left(\frac{1-\tau_0}{\tau_0}\right)\sqrt{\sigma}^k. 
}
Hence $0 \leq 1 - \tau_k \leq \big(\frac{1-\tau_0}{\tau_0}\big)\sqrt{\sigma}^k$.

%%% Step 6: Convergence of  xk.
\textit{Step 6: Linear convergence rate of $\set{x^k}$:}
Finally, note that $x^{\star} = x^{\ast}_1$ and $\tau_{k+1} = 1$, we can show from \eqref{eq:th100_est2c} and \eqref{eq:cond200} that
\myeqn{ 
\frac{\Vert x^{\ast}_1 - x^{\ast}_{\tau_k}\Vert_{x^{\ast}_1}}{1 + \Vert x^{\ast}_1 - x^{\ast}_{\tau_k}\Vert_{x^{\ast}_1} } \leq \frac{2L_g(1 - \tau_k)}{\tau_k} \overset{\tiny\eqref{eq:cond200}}{=} \bar{C}_0\sqrt{\sigma}^k.
}
Since in \eqref{eq:cond_of_sigma} we have chosen $\bar{C}_0$ such that $\bar{C}_0\sqrt{\sigma}^k < 1$, this inequality implies {that}
\myeq{eq:th203_est2b}{
\Delta_{1k}^{\ast} := \Vert x^{\ast}_1 - x^{\ast}_{\tau_k}\Vert_{x^{\ast}_1} \leq \frac{\bar{C}_0\sqrt{\sigma}^k}{1 - \bar{C}_0\sqrt{\sigma}^k} \leq \frac{\bar{C}_0\sqrt{\sigma}^k}{1-\bar{C}_0} = C_1\sqrt{\sigma}^k,
}
where $C_1 :=  \frac{\bar{C}_0}{1 -  \bar{C}_0} > 0$.
Using the triangle inequality, the self-concordance of $f$, and \eqref{eq:th203_est2b}, we 
can derive {that}
\myeqn{
\Vert x^k - x^{\star}\Vert_{x^{\star}} \leq \Vert x^k - x^{\ast}_{\tau_k}\Vert_{x^{\ast}_1} + \Vert x^{\ast}_{\tau_k} - x^{\ast}_1\Vert_{x^{\ast}_1}  \leq \tfrac{\Vert x^k - x^{\ast}_{\tau_k}\Vert_{x^{\ast}_{\tau_k}} }{1 - \Delta_{1k}^{\ast}} + \Delta_{1k}^{\ast}  \leq \tfrac{0.05\sigma^k}{1 - C_1\sqrt{\sigma}^k} + C_1\sqrt{\sigma}^k \leq \hat{C}\sqrt{\sigma}^k,
}
where $\hat{C} := \frac{0.05}{1-C_1} + C_1 = \frac{0.05(1-\bar{C}_0)}{1 - 2\bar{C}_0} + \frac{\bar{C}_0}{1-\bar{C}_0}$.
This inequality shows that $\set{x^k}$ converges to a solution $x^{\star}$ of \eqref{eq:composite_cvx} at a linear rate.
%The last statement of the theorem follows from this conclusion and Lemma~\ref{le:bound_subgrad}.
\Eproof
%%% End of the proof

%%% The proof of Theorem~5.
\subsection{Proof of Theorem~\ref{th:linear_convergence2}: Linear convergence 
when $f$ is a self-concordant barrier}\label{apdx:th:linear_convergence2}
Let $\Delta_k^{\ast} := \Vert x^{\ast}_{\tau_{k+1}} - x^{\ast}_{\tau_k}\Vert_{x_{\tau_{k+1}}^{\ast}}$ as defined in Theorem~\ref{th:linear_convergence1}.
Using \eqref{eq:lm11_bound_fx0_b2} with $\tau := \tau_{k+1}$ and $\hat{\tau} := \tau_k$, we get
\myeqn{ 
\frac{\Delta_k^{\ast}}{1 + \Delta_k^{\ast}} \leq \frac{\abs{\tau_{k+1} - \tau_k}}{\tau_k}\Big[ \Vert \nabla{f}(x^{\ast}_{\tau_{k+1}})\Vert_{x^{\ast}_{\tau_{k+1}}}^{\ast} + \Vert \xi^0 \Vert_{x^{\ast}_{\tau_{k+1}}}^{\ast}\Big].
}
Since $\Vert \xi^0 \Vert_{x^{\ast}_{\tau_{k+1}}}^{\ast} \leq \bar{c}_0 := \theta_f\Vert \xi^0\Vert^{\ast}_{x^{\star}_f}$ and $\Vert \nabla{f}(x^{\ast}_{\tau_{k+1}})\Vert_{x^{\ast}_{\tau_{k+1}}}^{\ast} \leq \sqrt{\nu_f}$ due to the self-concordant barrier 
{property} of $f$, 
the last inequality leads to 
\myeq{eq:th100_est2}{
\frac{\Delta_k^{\ast}}{1 + \Delta_k^{\ast}} \leq \frac{\abs{\tau_{k+1} - \tau_k}}{\tau_k}(\sqrt{\nu_f} + \bar{c}_0).
}
Since we want to guarantee $\norms{x^k - x^{\ast}_{\tau_k}}_{x^{\ast}_{\tau_k}} \leq \beta\sigma^k$ for  $\beta = 0.05$ and $\sigma \in (0, 1]$ as in Theorem~\ref{th:linear_convergence2b}, similar to the proof of Theorem~\ref{th:linear_convergence2b}, we can choose 
\myeq{eq:cond_Delta_star}{
\Delta_k^{\ast} \leq \Delta_k := \left( \frac{1}{10}\sqrt{\sigma - 0.01} - \frac{1}{18}\sqrt{\sigma}^k \right)\sqrt{\sigma}^k \equiv C_k\sqrt{\sigma}^k,
}
where $C_k := \frac{1}{10}\sqrt{\sigma - 0.01} - \frac{1}{18}\sqrt{\sigma}^k$.
Here, $C_k > 0$ for $k\geq 0$ if $\sigma \in (0.318642, 1]$.

Now, from the update rule  \eqref{eq:sigma_choice} of Theorem~\ref{th:linear_convergence2}, we have
\myeqn{
\frac{\abs{\tau_{k+1} - \tau_k}}{\tau_k}(\sqrt{\nu_f} + \bar{c}_0) = \frac{C_k\sqrt{\sigma}^k}{1 + C_k\sqrt{\sigma}^k} \geq \frac{\Delta_k^{\ast}}{1 + \Delta_k^{\ast}}.
}
This inequality shows that \eqref{eq:cond_Delta_star} automatically holds.

Let $a_k := \frac{C_k\sqrt{\sigma}^k}{(\sqrt{\nu_f} + \bar{c}_0)(1 + C_k\sqrt{\sigma}^k)} \in (0, 0.052133]$.
It is easily to show that $a_k \geq \tfrac{C_0\sqrt{\sigma}^k}{(\sqrt{\nu_f} + \bar{c}_0)(1 + C_0\sqrt{\sigma}^k)}$, where $C_0 := \frac{\sqrt{\sigma - 0.01}}{10} - \frac{1}{18} > 0$.
By induction, we get $\tau_k = \tau_0\prod_{i=0}^k(1+a_i)$.
Using an elementary inequality $\prod_{i=0}^k(1+a_i) \geq 1 + \sum_{i=0}^ka_i$, we have 
\myeqn{
\tau_k \geq \tau_0 + \frac{C_0\tau_0}{(1 + C_0)(\sqrt{\nu_f}+\bar{c}_0)}\sum_{i=0}^k\sqrt{\sigma}^i = \tau_0 + \frac{\tau_0C_0(1-\sqrt{\sigma}^k)}{(1 + C_0)(\sqrt{\nu_f}+\bar{c}_0)(1-\sqrt{\sigma})}.
}
Hence, for any $\tau_0 \in (0, 1)$, if we choose $\sigma \in (0.318642, 1]$ such that $\sigma \geq \big(1 - \frac{\tau_0C_0}{(1-\tau_0)(1 + C_0)(\sqrt{\nu_f} + \bar{c}_0)}\big)^2$, then we have $1 - \tau_k \leq \frac{C_0\sqrt{\sigma}^k}{(1+ C_0)(\sqrt{\nu_f} + \bar{c}_0)(1-\sqrt{\sigma})}$.

Finally, note that $x^{\star} = x^{\ast}_1$, we can show from \eqref{eq:th100_est2} that
\myeqn{ 
\frac{\Vert x^{\ast}_1 - x^{\ast}_{\tau_k}\Vert_{x^{\ast}_1}}{1 + \Vert x^{\ast}_1 - x^{\ast}_{\tau_k}\Vert_{x^{\ast}_1} } \leq \left(\frac{1-\tau_k}{\tau_k}\right)\left(\sqrt{\nu_f} + \bar{c}_0\right) \leq \frac{C_0\sqrt{\sigma}^k}{\tau_0(1+C_0)(1-\sqrt{\sigma})}.
}
This shows that
\myeq{eq:th203_est2}{
\Delta_{1k}^{\ast} := \Vert x^{\ast}_1 - x^{\ast}_{\tau_k}\Vert_{x^{\ast}_1} \leq 
{\widetilde{C}_1}\sqrt{\sigma}^k,
}
where ${\widetilde{C}_1} := \frac{C_0}{\tau_0(1 + C_0)(1-\sqrt{\sigma}) - C_0} > 0$.
Next, using the triangle inequality, the self-concordance of $f$, and \eqref{eq:th203_est2}, we can derive
\myeqn{
\Vert x^k - x^{\star}\Vert_{x^{\star}} \leq \Vert x^k - x^{\ast}_{\tau_k}\Vert_{x^{\ast}_1} + \Vert x^{\ast}_{\tau_k} - x^{\ast}_1\Vert_{x^{\ast}_1}  \leq \frac{\Vert x^k - x^{\ast}_{\tau_k}\Vert_{x^{\ast}_{\tau_k}} }{1 - \Delta_1^{\ast}} + \Delta_1^{\ast}  \overset{\tiny\eqref{eq:th203_est2}}{\leq} \frac{\beta\sigma^k}{1 - \widetilde{C}_1\sqrt{\sigma}^k} + {\widetilde{C}_1} \sqrt{\sigma}^k.
}
This inequality shows that $\set{x^k}$ converges to a solution $x^{\star}$ of \eqref{eq:composite_cvx} at a linear rate.
\Eproof
%%% End of the proof.

%%% The proof for the choice of tau0.
\subsection{Proof of  \eqref{eq:choice_of_tau0}: The choice of $\tau_0$}\label{apdx:eq:choice_of_tau0}
From \eqref{eq:lm11_bound_fx0_b3}, we have $\norms{x^0 - x^{\ast}_{\tau_0}}_2 \leq \frac{\tau_0}{\mu_g}\norms{\nabla{f}(x^0) + \xi^0}_2$.
Let $\lambda_{\max}(\nabla^2{f}(x^0))$ be the maximum eigenvalue of $\nabla^2{f}(x^0)$.
Hence, if we assume that 
\myeqn{
\gamma := \lambda_{\max}(\nabla^2{f}(x^0))^{1/2}\frac{\tau_0}{\mu_g}\norms{\nabla{f}(x^0) + \xi^0}_2 < 1, 
}
then we have 
\myeqn{
\norms{x^0 - x^{\ast}_{\tau_0}}_{x^0} \leq \lambda_{\max}(\nabla^2{f}(x^0))^{1/2}\norms{x^0 - x^{\ast}_{\tau_0}}_2 \leq \gamma < 1.
}
Since 
\myeqn{
\norms{x^0 - x^{\ast}_{\tau_0}}_{x^{\ast}_{\tau_0}} \leq \frac{\norms{x^0 - x^{\ast}_{\tau_0}}_{x^0}}{1 - \norms{x^0 - x^{\ast}_{\tau_0}}_{x^0}} \leq \frac{\gamma}{1-\gamma}
}
as long as $\norms{x^0 - x^{\ast}_{\tau_0}}_{x^0} < 1$, to guarantee 
{that} $\norms{x^0 - x^{\ast}_{\tau_0}}_{x^{\ast}_{\tau_0}} \leq \beta$, we impose $\frac{\gamma}{1-\gamma} \leq \beta$.
The last condition is equivalent to $\gamma \leq \frac{\beta}{1+\beta}$, which shows that 
\myeqn{
\tau_0 \leq \frac{\beta\mu_g}{(1+\beta)\lambda_{\max}(\nabla^2{f}(x^0))^{1/2}\norms{\nabla{f}(x^0) + \xi^0}_2}. 
}
This condition is  exactly \eqref{eq:choice_of_tau0}.
\Eproof
%%% End of the proof.

%%% The proof of Theorem 7. Finding an inital point.
\subsection{Proof of Theorem \ref{th:initial_point}: Finding an initial point}\label{apdx:th:initial_point}
Let $\lambda_j := \Vert x^j - x^{\ast}_{t_j}\Vert_{x^{\ast}_{t_j}}$ and 
$\Delta^{\ast}_j := \Vert x^{\ast}_{t_{j+1}} - x^{\ast}_{t_j}\Vert_{x^{\ast}_{t_{j+1}}}$
Assume that $\lambda_j \leq \beta$, from \eqref{eq:main_est}, to guarantee $\lambda_{j+1} \leq \beta$, we impose {the condition} 
\myeqn{
\frac{\beta}{100} + 5\left(\frac{10\beta}{9} + \Delta_j^{\ast}\right)^2 \leq \beta.
}
This condition holds if $\Delta_j^{\ast} \leq \Theta := \sqrt{\frac{99\beta}{500}} - \frac{10\beta}{9}$.
Since $\beta \leq 0.1145 < 0.16038$, we have $\Theta := \sqrt{\frac{99\beta}{500}} - \frac{10\beta}{9} > 0$.
Using \eqref{eq:lm11_bound_fx0_b4} with $t := t_{j+1}$ and $\hat{t} := t_j$, we obtain
{that}
\myeqn{
\frac{\Delta_j^{\ast}}{1 + \Delta_j^{\ast}} \leq (t_j - t_{j+1})\norms{\nabla{f}(\hat{x}^0) + \hat{\xi}^0}_{x^{\ast}_{t_{k+1}}}^{\ast}.
}
Since $f$ is $\mu_f$-strongly convex, we have $\norms{\nabla{f}(x^0) + \xi^0}_{x^{\ast}_{t_{k+1}}}^{\ast} \leq \frac{1}{\sqrt{\mu_f}}\norms{\nabla{f}(\hat{x}^0) + \hat{\xi}^0}_2$.
Hence, we {get} $\frac{\Delta_j^{\ast}}{1 + \Delta_j^{\ast}} \leq (t_j - t_{j+1})\frac{\norms{\nabla{f}(\hat{x}^0) + \hat{\xi}^0}_2}{\sqrt{\mu_f}}$.

To guarantee this condition, we impose $\frac{\Theta}{1 + \Theta} = M_0(t_j - t_{j+1})$, where $M_0 := \frac{\norms{\nabla{f}(\hat{x}^0) + \hat{\xi}^0}_2}{\sqrt{\mu_f}}$.
This allows us to update $t_j$ {to} $t_{j+1} := t_j - \frac{\Theta}{L_g(1 + \Theta)}$.
If we start from $t_0 \approx 1$, then we have $t_j := t_0 - \frac{\Theta}{M_0(1 + \Theta)}(j+1)$.
Hence, $t_j = 0$ if $j+1 \geq \frac{t_0(1 + \Theta_0)}{\Theta}$.
This shows that after $j_{\max} := \left\lfloor \frac{t_0M_0(1 + \Theta)}{\Theta}\right\rfloor$ iterations, we obtain $\hat{x}^{j_{\max}}$ such that $\norms{\hat{x}^{j_{\max}} - x^{\ast}_{\tau_0}}_{x^{\ast}_{\tau_0}} \leq \beta$.

Finally, we show how to choose $t_0$ such that $\norms{\hat{x}^0 - x^{\ast}_{t_0}}_{x^{\ast}_{t_0}} \leq \beta$. 
Using \eqref{eq:lm11_bound_fx0_b4} with $t := t_0$, if $(1-t_0)\norms{\nabla{f}(\hat{x}^0) + \hat{\xi}^0}_{\hat{x}^0}^{\ast} < \frac{1}{2}$, {then} we have
\myeq{eq:th7_proof3}{
\norms{\hat{x}^0 - x^{\ast}_{t_0}}_{x^0} \leq \frac{(1-t_0)\norms{\nabla{f}(\hat{x}^0) + \hat{\xi}^0}_{\hat{x}^0}^{\ast}}{1 - (1-t_0)\norms{\nabla{f}(\hat{x}^0) + \hat{\xi}^0}_{\hat{x}^0}^{\ast}} < 1.
}
By using Lemma~\ref{le:auxi_results}(e), if $\norms{\hat{x}^0 - x^{\ast}_{t_0}}_{\hat{x}^0} < 1$, then we have $\norms{\hat{x}^0 - x^{\ast}_{t_0}}_{x^{\ast}_{t_0}} \leq \frac{\norms{\hat{x}^0 - x^{\ast}_{t_0}}_{\hat{x}^0}}{1-\norms{\hat{x}^0 - x^{\ast}_{t_0}}
_{\hat{x}^0}}$. To guarantee {that} $\norms{\hat{x}^0 - x^{\ast}_{t_0}}_{x^{\ast}_{t_0}} \leq \beta$, we {require} $\norms{\hat{x}^0 - x^{\ast}_{t_0}}_{\hat{x}^0} \leq \frac{\beta}{1+\beta}$.
Combing this estimate and \eqref{eq:th7_proof3}, we can show that $\norms{\hat{x}^0 - x^{\ast}_{t_0}}_{x^{\ast}_{t_0}} \leq \beta$ if $1-t_0\geq \frac{\beta}{(1+2\beta)\norms{\nabla{f}(\hat{x}^0) + \hat{\xi}^0}_{\hat{x}^0}^{\ast}}$.
Therefore, we can choose
\myeqn{
t_0 := \begin{cases} 1 - \frac{\beta}{(1+2\beta)\norms{\nabla{f}(\hat{x}^0) + \hat{\xi}^0}_{\hat{x}^0}^{\ast}}, ~~&\text{if}~~ \norms{\nabla{f}(\hat{x}^0) + \hat{\xi}^0}_{\hat{x}^0}^{\ast} > \frac{1+2\beta}{\beta},\\
1 & \text{otherwise},
\end{cases}
}
to guarantee that $\norms{\hat{x}^0 - x^{\ast}_{t_0}}_{x^{\ast}_{t_0}} \leq \beta$.
\Eproof
%%% End of the proof.

%% The proof of Lemma 3.6.
\subsection{Proof of Lemma \ref{le:approx_pd_sol}: Approximate solution of the dual problem}\label{apdx:le:approx_pd_sol}

{Define  the} quadratic function
\myeq{eq:q_k}{
q_k(y) := \iprods{\nabla{\varphi_{\tau_{k+1}}}(y^k), y - y^k} + \frac{1}{2}\iprods{\nabla^2{\varphi}(y^k)(y - y^k), y - y^k}.
}
It is easy to show that 
\myeq{eq:lm36_est1}{
\begin{array}{ll}
q_k(y^{k+1}) - q_k(\bar{y}^{k+1}) &=  \iprods{\nabla{\varphi_{\tau_{k+1}}}(y^k) + \nabla^2{\varphi}(y^k)(\bar{y}^{k+1} - y^k), y^{k+1} -  \bar{y}^{k+1}} \vspace{1ex}\\
& +~ \tfrac{1}{2}\iprods{\nabla^2{\varphi}(y^k)(y^{k+1} - \bar{y}^{k+1}), y^{k+1} - \bar{y}^{k+1}}.
\end{array}}
{By using} \eqref{eq:approx_sol3} and  \eqref{eq:primal_sol_recovery_inexact}, we have $y^{k+1} \in  \partial{\psi}(\tau_{k+1}z^{k+1})$, which implies $z^{k+1} \in \frac{1}{\tau_{k+1}}\partial{\psi^{\ast}}(y^{k+1})$.
Hence, by the convexity of $\psi^{\ast}$, we have $\frac{1}{\tau_{k+1}}\big(\psi^{\ast}(y^{k+1}) - \psi^{\ast}(\bar{y}^{k+1})\big)  \leq \iprods{z^{k+1}, y^{k+1} - \bar{y}^{k+1}}$.
Using this inequality and \eqref{eq:lm36_est1}, if we define $\Pc_k(y) := q_k(y) + \frac{1}{\tau_{k+1}}\psi^{\ast}(y)$, then we have
\myeqn{
\begin{array}{ll}
\Pc_k(y^{k+1}) {\!\!\!}&-~ \Pc_k(\bar{y}^{k+1}) \leq  \tfrac{1}{2}\Vert  y^{k+1} - \bar{y}^{k+1}\Vert_{y^k}^2 \vspace{1ex}\\
&+~ \iprods{z^{k+1} + \nabla{\varphi_{\tau_{k+1}}}(y^k) + \nabla^2{\varphi}(y^k)(\bar{y}^{k+1} - y^k), y^{k+1} - \bar{y}^{k+1}}. 
\end{array}}
Next, using \eqref{eq:primal_sol_recovery_inexact}, we have $z^{k+1} =  - \nabla{\varphi_{\tau_{k+1}}}(y^k) - \nabla^2{\varphi}(y^k)(y^{k+1} - y^k - \tilde{e}_k)$ for $\tilde{e}_k \in\R^p$.
Substituting this expression into the last inequality, we obtain
\myeqn{ 
\begin{array}{ll}
\Pc_k(y^{k\!+\!1}) - \Pc_k(\bar{y}^{k\!+\!1}) &\leq  \tfrac{1}{2}\Vert  y^{k+1} \!-\! \bar{y}^{k+1}\Vert_{y^k}^2  + \iprods{ \nabla^2{\varphi}(y^k)(\bar{y}^{k+1} - y^{k+1} + \tilde{e}_k, y^{k\!+\!1} \!-\! \bar{y}^{k\!+\!1}} \vspace{1ex}\\
&= -\tfrac{1}{2}\Vert  y^{k+1} \!-\! \bar{y}^{k+1}\Vert_{y^k}^2   +  \iprods{ \nabla^2{\varphi}(y^k)\tilde{e}_k, y^{k+1} - \bar{y}^{k+1}} \vspace{1ex}\\
&\leq \frac{1}{2}\Vert \tilde{e}_k\Vert_{y^k}^2.
\end{array}
}
The last inequality implies that if $\norm{\tilde{e}_k}_{y^k} \leq \delta$, then $\Pc_k(y^{k+1}) - \Pc_k(\bar{y}^{k+1}) \leq \frac{\delta^2}{2}$. This completes the proof.
\Eproof

%%% The proof of Lemma 7.
\subsection{Proof of Theorem~\ref{le:xk_app_sol}: An approximate solution for $x^{\star}$ of \eqref{eq:composite_cvx}}\label{apdx:le:xk_app_sol}
The statement (a) is trivial. 
We now prove (b) and (c) as follows.

(b)~From \eqref{eq:sol_of_com_cvx}, \eqref{eq:app_sol_of_com_cvx}, and the definition of $\varphi$ in \eqref{eq:grad_hess_of_varphi}, we can show that
\myeqn{
{\!\!\!}\begin{array}{ll}
\Vert x^k - x^{\star}\Vert_{x^{\star}} {\!\!\!\!\!}&:= \iprods{\nabla^2{f^{\ast}}(-D^{\top}y^{\star})^{-1}(x^k - x^{\star}), (x^k - x^{\star})}^{1/2} \vspace{1ex}\\
&= \iprods{ \nabla^2{f^{\ast}}(-D^{\top}y^{\star})^{-1}(\nabla{f^{\ast}}(-D^{\top}y^k) \!-\! \nabla{f^{\ast}}(-D^{\top}y^{\star})), \nabla{f^{\ast}}(-D^{\top}y^k) \!-\! \nabla{f^{\ast}}(-D^{\top}y^{\star}) }.
\end{array}{\!\!\!}
}
Let  $G_k :=  \int_0^1\nabla^2{f^{\ast}}(-D^{\top}{y^{\star}} -  s D^{\top}(y^k - y^{\star})ds$.
By the mean-value theorem and \cite[Corollary 4.1.4]{Nesterov2004}, we can derive from the above expression that
\myeqn{
\begin{array}{ll}
\Vert x^k - x^{\star}\Vert_{x^{\star}}^2 &= {(y^k - y^{\star})^\top}DG_k^{\top}\nabla^2{f^{\ast}}(-D^{\top}y^{\star})^{-1}G_kD^{\top}(y^k - y^{\star}) \vspace{1ex}\\
&\leq \frac{ \iprods{D\nabla^2{f^{\ast}}(-D^{\top}y^{\star})D^{\top}(y^k -y^{\star}), y^k -y^{\star}}}{\left( 1 - \iprods{D\nabla^2{f^{\ast}}(-D^{\top}y^{\star})D^{\top}(y^k -y^{\star}), y^k -y^{\star}}^{1/2}\right)} \vspace{1ex}\\
& = \frac{\iprods{\nabla^2{\varphi}(y^{\star})(y^k - y^{\star}), y^k - y^{\star}} }{\big( 1 - \iprods{\nabla^2{\varphi}(y^{\star})(y^k - y^{\star}), y^k - y^{\star}}^{1/2} \big)^2} \vspace{1ex}\\
&= \frac{\Vert y^k - y^{\star}\Vert_{y^{\star}}^2}{ \left( 1- \Vert y^k - y^{\star}\Vert_{y^{\star}}\right)^2}.
\end{array}
}
Here, we note that $\nabla^2{\varphi}(y^{\star}) = D\nabla^2{f^{\ast}}(-D^{\top}y^{\star})D^{\top}$.
This inequality leads to \eqref{eq:xk_app_sol} provided that $ \Vert y^k - y^{\star}\Vert_{y^{\star}} < 1$.

Since we apply \eqref{eq:homotopy_method4} to solve the dual problem \eqref{eq:dual_prob}, by Theorem \ref{th:linear_convergence2b} or Theorem \ref{th:linear_convergence2}, the sequence $\set{y^k}$ satisfies $\Vert y^k - y^{\star}\Vert_{y^{\star}} \leq \hat{C}\sqrt{\sigma}^k$ for given constants $\hat{C} > 0$ and $\sigma \in (0, 1)$.
Combining this relation and \eqref{eq:xk_app_sol}, we can show that $\set{x^k}$ converges linearly to $x^{\star}$ an optimal solution of \eqref{eq:composite_cvx}.
%%%% End of the proof.

(c)~First, since $\nabla{\varphi}_{\tau}(y) =  D\nabla{f^{\ast}}(-D^{\top}y) - \big(\frac{1}{\tau} - 1\big)\xi^0$, using \eqref{eq:sol_of_com_cvx} we can  write
\myeqn{
Dx^{\star} = D\nabla{f^{\ast}}(-D^{\top}y^{\star}) =  -\nabla{\varphi_{\tau_{k+1}}}(y^{\star}) + \big(\tfrac{1}{\tau_{k+1}} - 1\big)\xi^0.
}
Next, from \eqref{eq:primal_sol_recovery_inexact} we can write 
\myeqn{
z^{k+1} = \nabla^2{\varphi}(y^k)(y^{k+1} - y^k - \tilde{e}_k) - \nabla{\varphi_{\tau_{k+1}}}(y^k).
}
Combining these expressions, we can further estimate 
\myeqn{
\begin{array}{lcl}
\Vert z^{k+ 1}{\!} - Dx^{\star}\Vert^{\ast}_{y^k} {\!\!\!} &=& \Vert \nabla^2{\varphi}(y^k)(y^{k\!+\!1} {\!\!}-\! y^k \!-\! \tilde{e}_k) - \nabla{\varphi_{\tau_{k\!+\!1}}}(y^k) + \nabla{\varphi_{\tau_{k\!+\!1}}}(y^{\star}) - \big(\tfrac{1}{\tau_{k\!+\!1}} - 1\big)\xi^0\Vert_{y^k}^{\ast}\vspace{1ex}\\
&\leq& \Vert \nabla{\varphi_{\tau_{k+1}}}(y^{\star}) - \nabla{\varphi_{\tau_{k+1}}}(y^k) - \nabla^2{\varphi}(y^k)(y^{\star} - y^k)\Vert^{\ast}_{y^k}\vspace{1ex}\\
&&+ \Vert y^{k+1} - y^{\star}\Vert_{y^k} +  \big(\tfrac{1}{\tau_{k+1}} - 1\big)\Vert\xi^0\Vert^{\ast}_{y^k} + \Vert \tilde{e}_k\Vert_{y^k} \vspace{1ex}\\
&& \frac{\Vert y^{\star} - y^k\Vert_{y^k}^2 }{1 - \Vert y^{\star} - y^k\Vert_{y^k}} + \Vert y^{k+1} - y^{\star}\Vert_{y^k} + \big(\tfrac{1}{\tau_{k+1}} - 1\big)\Vert\xi^0\Vert^{\ast}_{y^k} + \Vert \tilde{e}_k\Vert_{y^k},
\end{array}
}
provided that $\Vert y^{\star} - y^k\Vert_{y^k} < 1$.
Here, the last inequality follows from the self-concordance of $\varphi_{\tau}$ with similar proof as  \cite[Theorem 1]{TranDinh2016c}.
This proves \eqref{eq:norm_opt_sol_pd}.

Since we apply \eqref{eq:homotopy_method4} to solve the dual problem \eqref{eq:dual_prob}, by Theorem \ref{th:linear_convergence2b} or Theorem \ref{th:linear_convergence2}, we have $\Vert y^k - y^{\star}\Vert_{y^k} \leq \hat{C}\sqrt{\sigma}^k$ and $\Vert y^{k+1} - y^{\star}\Vert_{y^k} \leq \hat{C}\sqrt{\sigma}^{k+1}$ for some constants $\hat{C} > 0$ and $\sigma \in (0, 1)$.
In addition, $0 \leq \frac{1}{\tau_{k+1}} - 1 = \frac{1-\tau_{k+1}}{\tau_{k+1}} \leq \frac{1 - \tau_{k+1}}{\tau_0} \leq C_1\sqrt{\sigma}^{k+1}$ for some  constant $C_1 > 0$.
Using these bounds and $\Vert \tilde{e}_k\Vert_{y^k} \leq \delta_k$ in \eqref{eq:norm_opt_sol_pd}, we can show that
\myeqn{
\Vert z^{k+1} - Dx^{\star}\Vert^{\ast}_{y^k} \leq \frac{\hat{C}^2\sigma^k}{1 - \hat{C}\sqrt{\sigma}^k} + \hat{C}\sqrt{\sigma}^{k+1} + C_1\Vert\xi^0\Vert_{y^k}^{\ast}\sqrt{\sigma}^{k+1} + \delta_k.
}
Under the conditions of Theorem \ref{th:linear_convergence2b} or Theorem \ref{th:linear_convergence2}, we have $\Vert\xi^0\Vert_{y^k}^{\ast}  = \iprods{\nabla^2{\varphi}(y^k)^{-1}\xi^0, \xi^0}^{1/2} \leq M$ for some $M > 0$ and $\delta_k \leq \frac{\beta}{113}\sigma^k$ for  $\beta = 0.05$, then we can easily show that 
\myeqn{
\Vert z^{k+1} -  Dx^{\star}\Vert^{\ast}_{y^k} \leq \left(\tfrac{\hat{C}^2}{1-\hat{C}} + \hat{C} + C_1M + \tfrac{\beta}{113}\right) \sqrt{\sigma}^k, 
}
provided that $\hat{C} < 1$. 
If $D$ is invertible, then we can show that $x^{k+1} := D^{-1}z^{k+1}$ is an approximate solution to $x^{\star}$ of \eqref{eq:composite_cvx}.
Moreover, $\norms{x^k - x^{\star}}_{y^k}$ converges to zero at a linear rate.
\Eproof
%% End of the proof.

%%%%%%%%%%%%%%%%%%%%%%%%%%%%%%%%%%%%%%%%%%%%%%%%%
%+ References.
%%%%%%%%%%%%%%%%%%%%%%%%%%%%%%%%%%%%%%%%%%%%%%%%%
\bibliographystyle{siamplain}
%\bibliography{/Users/quoctd/Dropbox/E-Books/tran_bibtex_new}

\end{document}